\documentclass[a4paper]{article}
\pagestyle{plain} \textheight24.5cm \topmargin0cm
\oddsidemargin0cm  \textwidth16cm \headsep0cm \headheight0cm
\parindent0em \parskip0.1em
\newcommand{\oneandhalfspace}{\renewcommand{\@defaultbaselinestretch}{1.1}}


\usepackage[active]{srcltx}
\usepackage{setspace}
\usepackage[USenglish]{babel} 
\usepackage[T1]{fontenc}
\usepackage[ansinew]{inputenc}
\usepackage{lmodern}
\usepackage{graphicx}
\usepackage{geometry}
\usepackage{amsmath}
\usepackage{amsthm}
\usepackage{amsfonts,amssymb,latexsym}
\usepackage{color}
\usepackage{float}
\usepackage{enumerate}

\newtheorem{thm}{Theorem}[section]

\newtheorem{lemma}[thm]{Lemma}
\newtheorem{prop}[thm]{Proposition}
\newtheorem{defn}[thm]{Definition}
\newtheorem{remark}[thm]{Remark}
\newtheorem{cor}[thm]{Corollary}
\newtheorem{conj}[thm]{Conjecture}

\newtheorem*{notation}{Notation}
\numberwithin{equation}{section}
\numberwithin{figure}{section}

\renewcommand{\atop}[2]{\genfrac{}{}{0pt}{}{#1}{#2}}
\newcommand{\amend}[2]{#2}
\newcommand{\lf}[1]{#1}
\newcommand{\ft}[1]{#1}
\newcommand{\delete}[1]{}

\def\dd{\mathrm{d}}
\def\Efcc{E^{\mathrm{fcc}}}
\def\estar{e^\ast}
\def\Vstar{V^\ast}
\def\meas{\mathrm{meas}}
\def\conv{\mathrm{conv}}
\def\a{\ensuremath{\mathcal{A}}}
\def\angset{\ensuremath{A}}

\def\b{\ensuremath{\mathcal{B}}}

\def\c{\ensuremath{\mathcal{L}}}

\def\cfcc{\ensuremath{\mathcal{L}_{\textnormal{fcc}}}}
\def\chcp{\ensuremath{\mathcal{L}_{\textnormal{hcp}}}}

\def\conv{\ensuremath{\textnormal{conv}}}

\def\cub{\ensuremath{Q_\mathrm{co}}}
\def\oct{\ensuremath{Q_o}}
\def\czero{\ensuremath{\c\backslash\left\{0\right\}}}
\def\d{\ensuremath{\mathcal{D}}}
\def\diam{\ensuremath{\textnormal{diam}}}
\def\dist{\ensuremath{\textnormal{dist}}}

\def\eps{\ensuremath{\varepsilon}}

\def\g{\ensuremath{\mathcal{G}}}

\def\gl{\ensuremath{\hat{\Gamma}}}
\def\glab{\ensuremath{\Gamma}}

\def\half{\ensuremath{\frac{1}{2}}}
\def\I{\ensuremath{\textnormal{Id}}}

\def\K{\ensuremath{\mathcal{K}}}

\def\n{\ensuremath{\mathbb{N}}}

\def\O{\ensuremath{\Omega}}

\def\r{\ensuremath{\mathcal{R}}}

\def\R{\ensuremath{\mathbb{R}}}
\def\re{\R}

\def\rstar{\ensuremath{r^*}}

\def\t{\ensuremath{\mathcal{T}}}
\def\tcub{\ensuremath{Q_\mathrm{tco}}}

\def\u{\ensuremath{\mathcal{U}}}

\def\veps{\ensuremath{\varepsilon}}

\def\Xreg{X_\mathrm{reg}}
\def\Xcub{X_\mathrm{co}}
\def\Xtcub{X_\mathrm{tco}}

\def\z{\ensuremath{\mathbb{Z}}}

\def\Id{\mathrm{Id}}
\def\nnb{{\mathcal S}}
\def\sx{S^2}




\begin{document}

\title{Face-centered cubic crystallization of atomistic configurations}
\author{L. Flatley \and F.Theil
\thanks{\mbox{Corresponding author (f.theil@warwick.ac.uk)}}
\\
Mathematics Institute, University of Warwick, Coventry, CV47AL (UK)}
\date{}



\maketitle
\begin{abstract}
We address the question of whether three-dimensional crystals are
minimizers of classical many-body energies. This problem is of
conceptual relevance as it presents a significant milestone towards
understanding, on the atomistic level, phenomena such as melting or
plastic behavior. We characterize a set of rotation- and
translation-invariant two- and three-body potentials $V_2, V_3$ such
that the energy minimum of
$$ \frac{1}{\#Y}E(Y) = \frac{1}{\# Y}\left(2\sum_{\{y,y'\} \subset Y}V_2(y, y')
+ 6\sum_{\{y,y',y''\}\subset Y} V_3(y,y',y'')\right)$$
over all $Y \subset \R^3$, $\#Y = n$ converges to the energy per particle in
the face-centered cubic (fcc) lattice as $n$ tends to infinity.
The proof involves a careful analysis of the symmetry properties of the
fcc lattice.
\end{abstract}
\section{Introduction}
A material is crystalline if its underlying atomic
structure comprises a (multi)-lattice of particles.  It is known,
through techniques such as x-ray diffraction, that most materials
crystallize when at a sufficiently low temperature.
Whilst the range of crystalline materials is vast, however, the number of
underlying atomic structures is relatively small.  For example, more than half
of the metals crystallize into a face-centered cubic lattice (fcc), a
hexagonal-close packed crystal lattice (hcp) or a body-centered cubic lattice
(bcc). The question of why lattices are ubiquitous in nature can be
reformulated mathematically: why is it the case that so many
energy functionals admit periodic minimizers?

The premise of this paper is to provide a reasonably large set of
energy functionals $E: \R^{3 \times n} \to \R$ which are invariant under
translations and rotations and permutations for which the optimality of
unique periodic (lattice) configurations can \lf{be} rigorously proven in \lf{the} many particle limit $n \to \infty$.
Of course uniqueness of the optimal lattice only holds up to translations and rotations.

We will focus on cases where where the optimal lattice $\c$ is given by the fcc lattice
\begin{eqnarray}\label{eq:fccdefn}
 \cfcc:=(b_1 \ b_2 \ b_3)\z^3=\left\{\sum_{i=1}^3a_i b_i\; :\;a_i\in\z, \ i=1,2,3\right\}
\end{eqnarray}
\lf{where} $b_1=\frac{1}{\sqrt{2}}(0,1,1)^T,$
\ $b_2=\frac{1}{\sqrt{2}}(1,0,1)^T$ and
$b_3=\frac{1}{\sqrt{2}}(1,1,0)^T$ are column vectors.

To our best knowledge, the first three-dimensional examples for which the existence of
periodic minimizers can be guaranteed are due to A.~Suto, \cite{Suto05, Suto06}. The
results provide the existence of pair interaction potentials which
admit periodic ground states. However, the potentials are very delocalized
resulting in a significantly degenerate behavior of the ground states:
every periodic configuration with a sufficiently high density is a minimizer.
Our results are concerned with localized potentials where the
strength of the interaction between pairs of particles decays with an
inverse power law as a function of the distance. As a consequence we obtain that the minimum energy per particle converges to the energy of the fcc lattice as the number of particle tends to infinity.

The assumption that the potentials are localized induces a natural
splitting of the interaction energy into three separate contributions:
short-range, medium-range and long-range interaction energies. The analysis of pure short-range models
has a long history in discrete geometry. This approach was first
discussed by Kepler \cite{Kep}, who suggested that the six-fold
symmetry of snowflakes could be explained by considering ice
to be composed of small balls of vapor, packed together in
planes in the tightest way possible.  In two dimensions, this
tightest packing is an arrangement of discs whose centers are placed
at the points of a triangular lattice.  Such an arrangement clearly
exhibits the six-fold symmetry of a snowflake. A refinement of these
geometric ideas allows the construction of compactly supported
potentials for which it can be shown that minimizers
are translated, rotated and dilated subsets of the triangular lattice
\cite{Rad}. A more detailed analysis in \cite{FSY} even provides
a complete characterization of the surface energy and a proof that
after rescaling the minimizers converge to the Wulff shape as $n$
tends to infinity.

In three dimensions the situation is significantly more
involved. In particular, for pure short-range models, it cannot be
expected that minimizers are necessarily periodic. Illustrative examples are the theorems by
Hales \cite{Hal} and Sch\"utte and Van der Waerden \cite{SchVdWae}.
The results show that \textit{close-packed structures}, and in particular
fcc or hcp, solve both the densest packing problem and the kissing problem in three dimensions.

The close-packed structures are constructed as follows:
consider a triangular arrangement of spheres of diameter $1$ in a
two-dimensional plane $A=(b_1,b_2)\mathbb{Z}^2 \subset \R^3$. Now add a translated
copy $B=A+b_3$ so that each sphere of B sits
directly above a hole of $A$.  For the third layer $C$, there are two
possibilities: either \amend{1}{$C=A+\frac{2\sqrt{2}}{3}(1,1,-1)^T$} and $C$ lies directly
above $A$; or $C=A+2b_3$ and each sphere of C
sits directly above a hole of both $A$ and $B$.  In the first instance, we
relabel $C=A.$ By repeating this stacking construction we obtain a close-packed
structure.  The fcc lattice and hcp lattice are defined to be
the close-packed structures with stacking sequences $ABCABCABCA...$ or
$ABABABABA...,$ respectively.  This definition of fcc is equivalent to
(\ref{eq:fccdefn}), whilst hcp is a multi-lattice which is generated by the basis
$\tilde b_1=b_1,\tilde b_2=b_2,\tilde b_3=\frac{2\sqrt{2}}{3}(1,1,-1)^T$
and a translation vector $t=b_3$ in the sense that
$$ \chcp = \{0,t\}+(\tilde b_1\,\tilde b_2\,\tilde b_3)\mathbb{Z}^3.$$
  \begin{figure}[H]
\begin{center}
\includegraphics[scale=0.2]{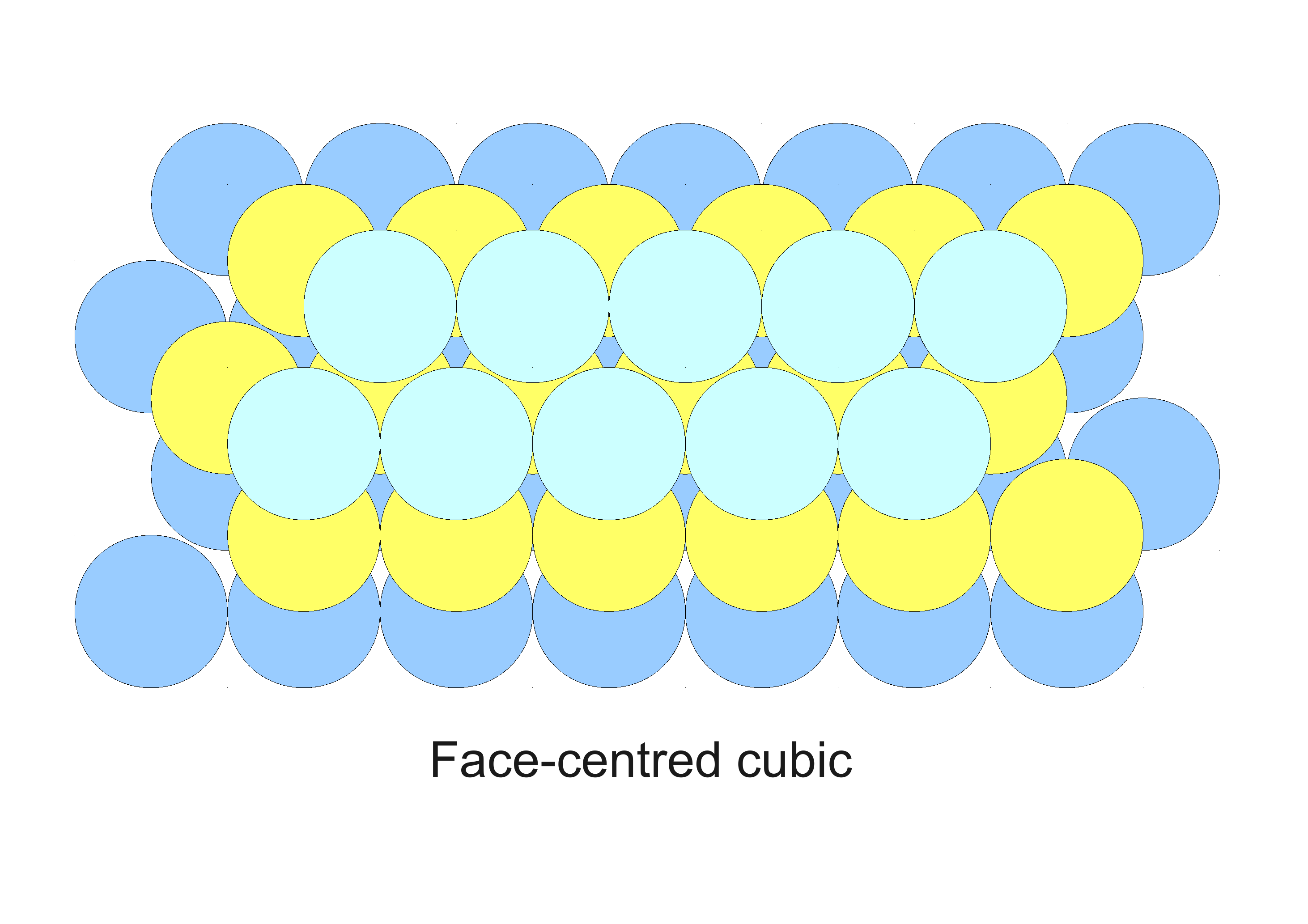}
\includegraphics[scale=.2]{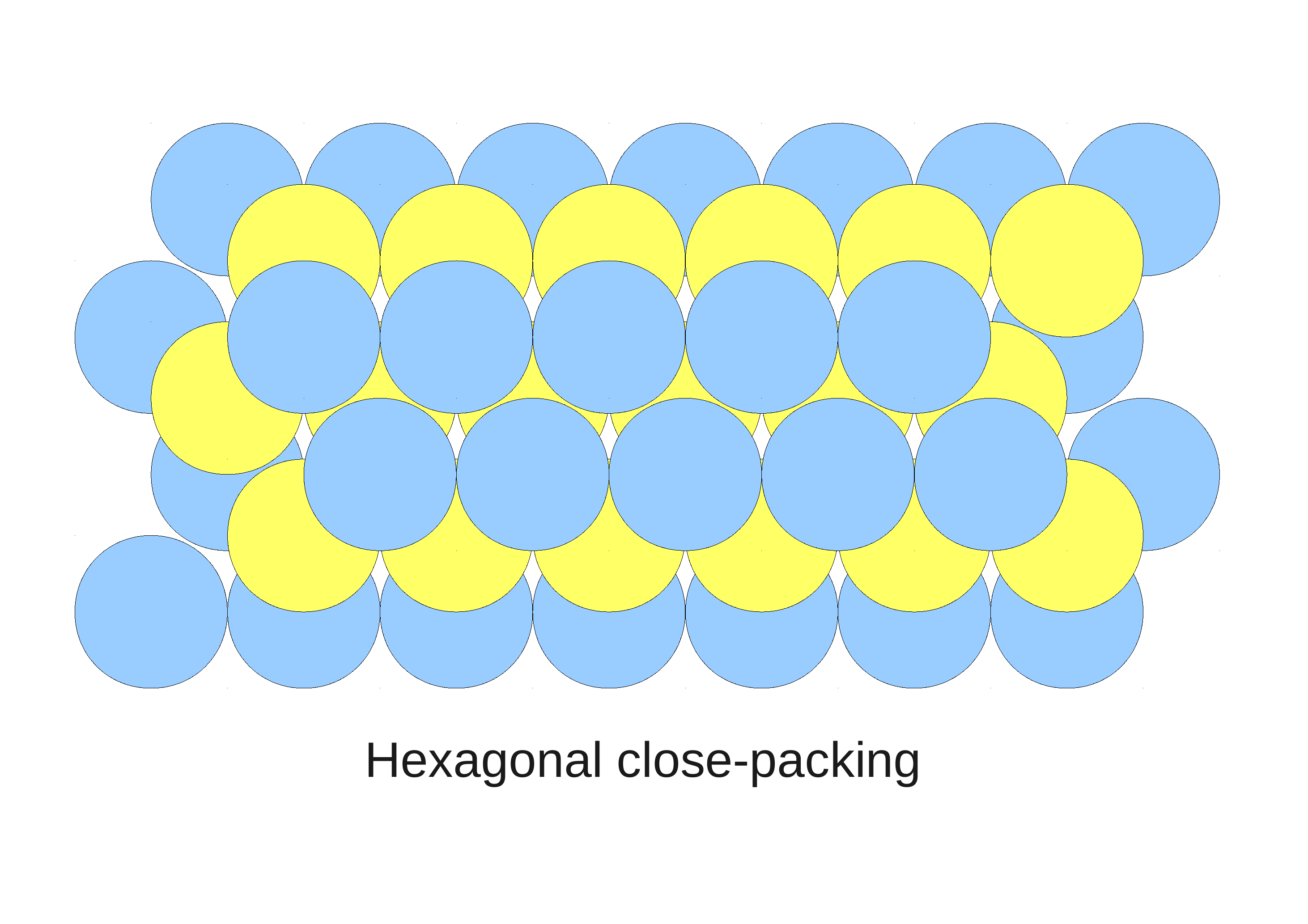}
\label{fig:threelayers}\caption{Three layers of the fcc and hcp crystal lattice}
\end{center}
\end{figure}
For our purposes, the most relevant property of the close-packed
structures is that they maximize the number of nearest neighbors to
every point in the packing, i.e. they solve the kissing problem.
It has been known since 1953 that the solution of the three-dimensional
kissing problem is twelve \cite{SchVdWae, Lee,Mus}, and
therefore
these maximal neighborhoods each contain twelve points.  In our analysis, it is
this common feature of the close-packed structures that makes them the
most natural candidates for crystallization.

The degeneracy of purely local models regarding the crystalline structure is
broken by the presence of long range interactions which occur naturally in
physically relevant situations \cite{Len}. The main result of this paper is
Theorem \ref{thm:mainintro}, which states the existence of a large set of
rotation and translation-invariant interaction potentials $V_2,V_3$ with the
property that the ground states behave asymptotically
like an fcc lattice energy, as the number of particles tends to infinity.

\begin{thm}[Main theorem]\label{thm:mainintro}
There exists $\alpha_0>0$ such that for any
  $\alpha\in(0,\alpha_0)$ and any pair of $\alpha$-localized potentials $(V_2,V_3)$ (cf. Definition~\ref{defn:adpot}) which is invariant under translations
and rotations \lf{in} the sense that
\begin{align} \label{symmetries}
V_i(R\xi_{\pi(1)} +t, \ldots, R\xi_{\pi(i)} + t) = V_i(\xi_1,\ldots,\xi_i)
\text{ for all permutations } \pi,\; R \in O(3),\; t \in \R^3,\; i\in \{2,3\}
\end{align}
and any finite set $Y \subset \R^3$ the inequality
\begin{align} \label{lb}
\frac{1}{\# Y}\,E(Y)>\min_{r>0}\Efcc(r)
\end{align}
holds, where
\begin{align} \label{eq:energy}
E(Y)&= 2\,\sum_{\{y,y'\} \subset Y} V_2(y,y') +
6\,\sum_{\{y,y',y''\} \subset Y} V_3(y,y',y''),
\end{align}
and
\begin{align}
\Efcc(r)&= \sum_{y \in \cfcc \setminus\{0\}} V_2(0,r\,y)
+2\,\sum_{\{y,y'\} \subset \cfcc \setminus\{0\}} V_3(0,r\,y,r\,y'). \nonumber
\end{align}
Moreover,
\begin{align} \label{upperbound}
\lim_{R \to \infty} \frac{1}{\# Y_R} E(Y_R) = \min_{r>0}\Efcc(r),
\end{align}
if $Y_R = B(0,R) \cap \cfcc$. $B(0,R)$ denotes the ball with radius $R$ centered at
the origin.
\end{thm}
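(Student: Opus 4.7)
The plan is to prove the upper and lower bounds separately, reducing each to a per-particle statement. Attribute to each $y\in Y$ the local energy
\[
e(y,Y)=\sum_{y'\in Y\setminus\{y\}}V_2(y,y')+2\sum_{\{y',y''\}\subset Y\setminus\{y\}}V_3(y,y',y''),
\]
so that $E(Y)=\sum_{y\in Y}e(y,Y)$, and observe that the infinite-lattice per-particle energy is exactly $\Efcc(r)$. Establishing \eqref{lb} then amounts to a pointwise bound $e(y,Y)\geq \min_{r>0}\Efcc(r)$ together with a strict surplus at some boundary particle, while \eqref{upperbound} becomes an exhaustion argument.

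The upper bound \eqref{upperbound} is the easier half. Taking $r^\ast\in\arg\min\Efcc$ and the rescaled sample $r^\ast Y_R$, the $\alpha$-localization of $V_2,V_3$ forces the truncation error in $e(y,r^\ast Y_R)-\Efcc(r^\ast)$ to be controlled by a summable tail in the distance from $y$ to $\partial B(0,R)$. Hence all but an $O(R^2)$ boundary layer of particles contribute, up to $o(1)$, exactly $\Efcc(r^\ast)$ each, and since $\#Y_R=\Theta(R^3)$ the boundary layer is negligible; combined with \eqref{lb} this yields the equality.

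The bulk of the work goes into \eqref{lb}. The strategy is to split $e(y,Y)$ into short-, medium- and long-range contributions and to analyse each separately. At short range the three-dimensional kissing bound of twelve (Sch\"utte--van~der~Waerden, Leech, Musin) caps the number of near-$r$ neighbors, and a close-packed shell saturates it; this pins the short-range optimum but, importantly, makes it identical for every close-packed stacking, in particular for $\cfcc$ and $\chcp$ alike. At medium range the second and third coordination shells of $\cfcc$ differ genuinely from those of any other close-packed arrangement, and the full rotational and permutational invariance \eqref{symmetries}, combined with the point-group symmetry of $\cfcc$, forces any deviation to raise the local energy by a definite amount. At long range, the assumption $\alpha<\alpha_0$ ensures that the uniform tail estimate is dominated by the medium-range gap and therefore cannot reverse the inequality. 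Summing the resulting pointwise bounds yields \eqref{lb}, and the strict inequality comes from the fact that any finite configuration admits at least one particle whose short-range shell cannot be closed, so that its local energy strictly exceeds the bulk value.

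The principal obstacle is the medium-range discrimination. Because all close-packed arrangements share the short-range optimum, the proof must extract a strictly positive gap from the second and third shells by exploiting the three-body term $V_3$, whose angular dependence is sensitive to the stacking order $ABCABC\ldots$ versus $ABABAB\ldots$. Making this gap quantitative, and showing that it dominates the long-range tail for every $\alpha<\alpha_0$, is precisely what the abstract advertises as ``a careful analysis of the symmetry properties of the fcc lattice,'' and is the step I expect to require the most intricate argument.
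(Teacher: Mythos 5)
Your overall division into upper and lower bounds, and your instinct that the lower bound is where the work lives, matches the paper: the upper bound~(\ref{upperbound}) follows directly from Theorem~1 of~\cite{BLL}, and the heavy lifting is entirely in establishing~(\ref{lb}) via the refined estimate
$E(y)\geq \estar\,\#X+C\sum_{q\in\nnb}\bigl||y(q_2)-y(q_1)|-1\bigr|^2+C\alpha^{1/2}\#\partial X$.

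However, the central step of your proposal---reducing~(\ref{lb}) to the \emph{pointwise} claim $e(y,Y)\geq\min_{r>0}\Efcc(r)$ for each particle $y\in Y$---is not a valid route, and identifying why exposes exactly where the paper's actual difficulty lies. When a large patch of the configuration is close to a rotated, slightly distorted copy of $\cfcc$, the single-particle energy $e(y,Y)$ is a smooth function of the local deformation whose first variation about the rigid reference state is generically nonzero; under compression or dilation of the local environment it can take values strictly \emph{below} $\Efcc(1)$. What saves the global lower bound is not a site-by-site inequality but a cancellation: after writing the distortion part of the pair energy as a sum over lattice paths, the linear-in-distortion terms vanish \emph{in the aggregate}, once because of the equilibrium condition $\sum_{k\in\cfcc\setminus\{0\}}|k|V'(|k|)=0$ (Lemma~\ref{lemma:renormmin}), and once because the fcc lattice admits reflections $\kappa_v$ that pair each path $\gamma$ with a reflected path $\hat\kappa_q(\gamma)$ whose contribution cancels the skew part of the rotation mismatch. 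This is precisely the content of the bounds on $J_1,\ldots,J_4$ in Section~\ref{sec:linear}, and the paper flags it as the core obstacle to going from two to three dimensions. Your sketch simply does not touch it, and without it there is no way to show the elastic contribution scales like $O(\#\partial X)$ rather than $O(\#X)$.

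A second, related omission is the rigidity machinery. Even granting the cancellation of linear terms, the remaining quadratic error in the path decomposition must be shown to be controlled by a quadratic sum of nearest-neighbour bond distortions. The paper achieves this via discrete reference configurations $(\Omega,\Phi,u)$ and the Friesecke--James--M\"uller rigidity estimate (Theorem~\ref{thm:FJM}, Propositions~\ref{prop:distsum} and \ref{prop:distsum2}), applied over scaled octahedral domains whose rigidity constants are uniform. Your description of the medium-range gap selecting fcc over hcp (via the second and third shells, with $V_3$ breaking the degeneracy of close-packings) is in the right spirit and corresponds to $E_\mathrm{struct}$ together with assumption~(\ref{fcccond}) and Lemma~\ref{lemma:cardinality}. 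But that argument only controls the \emph{structural} part of the energy; it says nothing about the elastic part, which is where your pointwise reduction would fail. Finally, your claim that the strict inequality comes from ``at least one particle whose short-range shell cannot be closed'' is correct in spirit---the paper's strictness comes from $\#\partial X\geq 1$ for any finite configuration---but that observation only becomes available once the additive surface term $C\alpha^{1/2}\#\partial X$ in~(\ref{finebound}) has been secured, which again requires the cancellation argument you omitted.
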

Note that \lf{we later assume that $\min_{r>0} \Efcc(r) = \Efcc(1).$}  This
assumption does not involve any loss of generality as it can be achieved by rescaling the
potentials $V_i$.

The role of the three-body potential $V_3$ is of solely \lf{a} technical nature. It
is quite likely that the assumptions of Theorem~\ref{thm:mainintro} can be
relaxed so that also pure pair models are covered, \lf{(}see the discussion
in Section~\ref{sec:discussion}\lf{)}.

It
cannot be expected that a similar theorem holds if $\cfcc$ is replaced by $\chcp$. For any dilation parameter $r>0$ and generic potentials $V_2$ and $V_3$ the dilated lattice $r\chcp$ is {\em not} optimal.
To see this we define for $\c\in \{\cfcc,\chcp\}$ the stored energy function
$$ W_{\c}(F)= \sum_{k \in \c\setminus\{0\}} V_2(0,F\,k) +2\,\sum_{\{k,k'\}\subset\czero}V_3(0,F\,k,F\, k'),$$
and the Piola-Kirchhoff tensor $S(F) \in \R^{3 \times 3}$
by
$$ S_{ij} = \frac{\partial W_{\c}}{\partial F_{ij}}(F).$$
If $W_{\c}(\rstar\, \Id) = \min_{r>0}W_{\c}(r\, \Id)$, then
\begin{align} \label{eq:trace}
\mathrm{trace}\, S(\rstar \, \Id)=0.
\end{align}
It is well known that $S(F)$ is a multiple of the identity if $F$ is a multiple of the identity and $\c = \cfcc$; together with (\ref{eq:trace}) this implies that $S(\rstar\,\Id) =0$.
For the convenience of the reader we give a short proof.

Observe that $W$ is invariant under the combined action of $O(3)$ and
the point group
$$G =\{ g \in O(3) \; : g\c = \c\}$$
in the sense that
\begin{align} \label{eq:inv}
W_{\c}(R\,F\,g)= W_{\c}(F) \text{ for all } R \in O(3), g \in  G.
\end{align}
Indeed,
\begin{align*}
W_{\c}(R\,F\,g) =& \sum_{k \in \c\setminus\{0\}} V_2(0,F\,g\,k) +2\,\sum_{\{k,k'\}\subset\czero}V_3(0,F\,
g\,k,F\,g\, k')\\
=&
\sum_{k \in \c\setminus\{0\}} V_2(0,F\,k) +2\,\sum_{\{k,k'\}\subset\czero}V_3(0,F\,k,\,F\, k') = W_{\c}(F).
\end{align*}
The penultimate equation is a consequence of (\ref{symmetries}), the final equation holds because $g\,\c=\c$.

The invariance (\ref{eq:inv}) implies that $S(F)$ is equivariant,
$$ S(R\,F\,g) = R\, S(F)\,g \text{ for all } R \in O(3) \text{ and } g \in G.$$
The choice $F = r\,\Id$ and $R=g^T$ delivers the equation
\begin{align} \label{eq:equiv} \Sigma = g^T \Sigma\, g \text{ for all } g \in G,\end{align}
where $\Sigma= S(r\,\Id)$.

A more general discussion of equivariant maps can be found in \cite{GSS}, here we
only present a self-contained proof of this rather simple example.
Eqn.~(\ref{eq:equiv}) implies that the restriction of $\Sigma$ to
the span of $\{b_1,b_2\}$ is a multiple of the identity if $\c \in \{\cfcc, \chcp\}$.
Indeed, if $G$
contains the reflection $g_v= \Id - 2\,v \otimes v$ for some $v \in \R^3$ such that $|v|=1$, then $v$ is an eigenvector of $\Sigma$. It can be checked that
$g_v \in G \text{ if } v \in \{b_1, b_2, b_2-b_1\}$, hence $b_1$, $b_2$ and $b_2-b_1$ are eigenvectors.
As these three vectors are linearly dependent the eigenvalues coincide. In the case $\c=\cfcc$
the same argument can be applied to $\{b_1, b_3, b_3-b_1\}$ and one obtains
the existence of $\lambda(r)\in \R$ such that
\begin{align} \label{eq:simple} S(r\,\Id) = \lambda\, \Id.
\end{align}
On the other hand,
if $\c = \chcp$ then every matrix $\Sigma$ which can be written as
\begin{align*}
\Sigma = \lambda \tilde b_3 \otimes \tilde b_3 + \mu \left(|\tilde b_3|^2 \, \Id -
\tilde b_3 \otimes \tilde b_3\right)
\end{align*}
is compatible with (\ref{eq:equiv}). In this case eqn.~(\ref{eq:trace}) only implies that $\lambda=
-2 \mu$. Thus, the configuration $y(x)= F_\eps x$ with $F_\eps = \rstar \,\Id - \eps \Sigma$ has lower energy if $\mu \neq 0$ and $0<\eps\ll 1$.

Theorem \ref{thm:mainintro} provides a significant generalization of the
two-dimensional result in
\cite{The}. The differences affect both the analysis of the local interactions and the analysis of the long-range interactions. In particular,
\begin{enumerate}
\item The challenges met by the local analysis are considerably more
  involved. For example, solutions of the kissing problem in three dimensions
  are highly degenerate.  Unlike in the two-dimensional setting, it is not
  possible to identify kissing configurations as orbits of simple symmetry
  groups.
\item To differentiate between the fcc and the hcp lattice it is unavoidable
  to consider medium-range interactions whose range reaches beyond nearest neighbors.
  Specifically, we require an analysis of second and third nearest
  neighbor interactions.
\item The heart of the proof consists of localizing long-range interaction
  energies.  The analysis in \cite{The, LiE} is limited to highly symmetric
  two-dimensional lattices and cannot be generalized to three dimensions where the
  lattices are less symmetric.
\end{enumerate}
Here we deal only with energetic crystallization.  Positional crystallization
can be obtained under suitable boundary conditions as a corollary to
Theorem~\ref{thm:mainintro} and a generalization of \cite{The}.
Corollary~\ref{cor:per} provides two such results when the particle positions are subjected
to clamped and periodic boundary conditions.
\ft{\begin{defn}
Let $Z \subset \R^3$ be countable. If there exists a Bravais lattice $\c \subset \R^3$ such that
$$Z+\eta= Z \text{ for each } \eta \in \c$$
or
$$ \#(\c \setminus Z)+\#(Z\setminus \c) < \infty,$$
we say that $Z$ is $\c$-periodic or a compactly supported perturbation of $\c$.\\[0.3em]
If $Z$ is $\c$-periodic or a compactly supported perturbation of $\c$, then the energy of $Z$ is defined by
 \begin{eqnarray} \label{defEA}
  E_{\a}(Z):=\sum_{y \in Y}\left(\sum_{y' \in Z}
  V_2(y,y')
+\sum_{\atop{y',y'' \in Z}{y\neq y' \neq y'' \neq y}}V_3(y,y',y'')\right),
\end{eqnarray}
where $Y = Z/\c$, $\a = Z \setminus Y$ if $Z$ is periodic, or $Y = Z\setminus \c$, $\a = \c \cap Z$ if $Z$ is a compactly supported perturbation of $\c$.
\end{defn}}
Observe that $E_{\a}(Z) =0$ if $Z \subset \c$.
\amend{2}{
\begin{cor}[Ground states with periodic or clamped boundary conditions]\label{cor:per}
  Let $\c$ be a Bravais lattice. There exists $\alpha_0>0$ such that for any $\alpha$-localized pair $(V_2,V_3)$ and any $Z \subset \R^3$ which is
either $\c$-periodic or a compactly supported perturbation of $\c$, the inequality
\begin{eqnarray*}
\frac{1}{\#Y} \,E_{\a}(Z) \geq \Efcc(1)
\end{eqnarray*}
holds with the convention $\# Y=1$ if $Y = \emptyset$.\\[0.3em]
Equality is attained if and only if $Z$ is periodic and
there exists a translation vector $t\in\re^3$ and a rotation $R \in SO(3)$ such that
\begin{eqnarray*}
 R\,Z+t=\cfcc.
\end{eqnarray*}
\end{cor}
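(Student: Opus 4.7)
I would treat the two cases separately, both via finite truncation of $Z$ followed by passage to the limit, with $\alpha$-localization of $V_2, V_3$ controlling tail contributions.

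\textbf{Periodic case.} Set $Y_R := Z \cap B(0,R)$ and introduce the infinite-crystal per-atom energy
$e(y) := \sum_{y' \in Z\setminus\{y\}} V_2(y,y') + 2\sum_{\{y',y''\}\subset Z\setminus\{y\}} V_3(y,y',y'')$,
which is well-defined by $\alpha$-localization, depends only on $y \bmod \c$ by periodicity, and satisfies $\sum_{y \in Y} e(y) = E_\a(Z)$. Split $Y_R$ into an interior region $\{y : \dist(y, \partial B(0,R)) \geq \sqrt{R}\}$ and a boundary layer: the truncated energy $e_{Y_R}(y)$ of each interior atom approximates $e(y)$ up to $O(R^{-\alpha/2})$, and the $O(R^{5/2})$ boundary atoms contribute uniformly bounded per-atom energies. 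Since $E(Y_R) = \sum_{y \in Y_R} e_{Y_R}(y)$, averaging gives $E(Y_R)/\#Y_R = E_\a(Z)/\#Y + o(1)$, and Theorem~\ref{thm:mainintro}'s strict inequality passes to the desired $\geq$ bound in the limit.

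\textbf{Compactly perturbed case.} Direct truncation here loses the $O(1)$ signal $E_\a(Z)$ because $E(Y_R)/\#Y_R$ tends to the per-atom energy $e_\c$ of the ambient Bravais lattice $\c$, and a straightforward periodization of the perturbation with period $L\c$ likewise collapses the ratio $E_{\a_L}(Z_L)/\#Y_L$ to $e_\c$ as $L \to \infty$. To extract $E_\a(Z)$, I would appeal to a surface--bulk decomposition implicit in the proof of Theorem~\ref{thm:mainintro}, expected to take the form $E(Y) = \#Y\,\Efcc(1) + \sigma_\partial(Y) + D(Y)$ with a surface term $\sigma_\partial$ determined by the shape of $Y$ and a nonnegative per-particle defect $D(Y) = \sum_y D_y$ vanishing exactly on pieces of $\cfcc$. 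Applying this to $Z \cap B(0,R)$ and $\c \cap B(0,R)$ and subtracting, the surface contributions cancel asymptotically because the two configurations agree outside the compact perturbation region, and the nonnegativity of $D$ on each side isolates the bound $E_\a(Z) \geq \#Y\,\Efcc(1)$ in the limit $R \to \infty$.

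\textbf{Equality case and main obstacle.} Equality $E_\a(Z)/\#Y = \Efcc(1)$ forces the per-particle defects $D_y$ to vanish throughout $Z$ (asymptotically in the truncation), which by construction makes every local neighborhood of $Z$ congruent to a piece of $\cfcc$; periodicity (or compactness of the perturbation) then promotes this local identification to the global conclusion $Z = R\,\cfcc + t$ for some $R \in SO(3)$, $t \in \R^3$. The main obstacle is clearly the compactly perturbed case: unlike the periodic case, it cannot be closed by truncation or naive periodization alone, and requires one to expose and use the surface--bulk--defect decomposition underlying Theorem~\ref{thm:mainintro}. The delicate ingredient is the cancellation of surface energies between $Z \cap B(0,R)$ and $\c \cap B(0,R)$, which rests on the local equality $Z = \c$ outside the perturbation and on $\alpha$-localization being strong enough to control the resulting error terms.
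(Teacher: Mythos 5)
Your plan diverges from the paper in a way that matters: the paper does \emph{not} deduce the corollary from Theorem~\ref{thm:mainintro} as a black box via truncation. It re-runs the proof of the theorem directly on the functional $E_{\a}$: assuming $\frac{1}{\#Y}E_{\a}(Z)\leq \Efcc(1)$, it first extracts (by a generalization of Proposition~\ref{prop:mindist}) a periodic sub-configuration $\tilde Z\subset Z$ with no larger energy per particle and satisfying the minimum distance bound, then establishes the analogue of the refined estimate (\ref{finebound}) for $E_{\tilde\a}(\tilde Z)$; equality then forces $\partial\tilde Y=\emptyset$ and every near-unit bond to have length exactly $1$, after which Propositions~\ref{prop:refconfig} and \ref{prop:distsum} yield $R\,Z+t=\cfcc$. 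In your periodic case, the truncation estimates ("uniformly bounded per-atom energies" in the boundary layer, tail control) already presuppose a minimum distance bound for $Z$, which you never obtain; and the limit statement (\ref{upperbound})/(\ref{lb}) of the theorem alone cannot give the equality characterization — for that you need the quantitative bound (\ref{finebound}) applied to the infinite configuration (or to truncations together with a periodicity argument turning ``defect density $\to0$'' into ``no defects''), plus the embedding/rigidity machinery to pass from defect-free unit bonds to a global rigid copy of $\cfcc$.

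The more serious gap is the compactly perturbed case. Your argument hinges on an exact decomposition $E(Y)=\#Y\,\Efcc(1)+\sigma_\partial(Y)+D(Y)$ with a shape-determined, cancellable surface term; no such identity is available — the paper only proves the one-sided bound (\ref{finebound}), in which the boundary enters as a \emph{positive} penalty $C\alpha^{1/2}\#\partial X$, and it explicitly notes that a full surface-energy characterization (as in \cite{FSY} in 2D) is not carried out here. Moreover, the quantity you would compute, $\lim_R\bigl(E(Z\cap B(0,R))-E(\c\cap B(0,R))\bigr)$, is not $E_{\a}(Z)$: by the paper's definition, for a compactly supported perturbation $E_{\a}(Z)$ sums only the interactions involving $Y=Z\setminus\c$, ignoring both the removed points $\c\setminus Z$ and all interactions internal to $\a=\c\cap Z$; and $\c$ is an \emph{arbitrary} Bravais lattice, so $\c\cap B(0,R)$ has no useful relation to $\Efcc(1)$ and its ``surface term'' cannot be expected to cancel against anything tied to the fcc energy. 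The correct route, as in the paper, is to prove the minimum-distance bound and the analogue of (\ref{finebound}) for $E_{\a}$ itself in this setting, which your proposal does not supply.
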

}


Before outlining the proof we comment on possible physical applications.
Classical groundstates are limiting cases of more general states.
One important group of examples is given by quantum mechanical energies where the
states are many-body wavefunctions $\psi \in L^2(\R^{3 \times n}).$

Another generalization of classical groundstates
are Gibbs states at finite temperature. Here one is interested in the properties
of the probability measure
$$ P_{\rho, \beta, \Lambda}(y) = \mathrm{e}^{-\beta\,E_n(y)+F(\beta,\rho,\Lambda)},$$
where $\Lambda \subset \R^3$, $y \in \Lambda^n$, $n = \rho\,|\Lambda|$, $\beta>0$ and $F(\beta,\rho,\Lambda)$ is
a normalization constant. The thermodynamic limit is obtained by studying the
asymptotic behavior of $P_{\rho,\beta,\Lambda}$ as $n\to \infty$.
 If $\rho$ is close to the density selected by the groundstates and the inverse temperature $\beta$ is large, then it is expected that $P_{\rho,\beta}$ exhibits long-range positional order in the following sense: There exists $s(\rho,\beta)>0$ such that
\begin{align} \label{lrc}
\lim_{n \to \infty} \frac{1}{n-1}\int \dd P_{\rho,\beta,\Lambda}(y)\,\sup_{Q \in O(3)}\sum_{i=2}^n f\left(s\, Q\,(y_i-y_1)\right) > 0
\end{align}
for any \lf{$\cfcc$}-periodic and non-constant function $f \in C(\R^3)$ with average 0.
The Mermin-Wagner theorem \cite{MW} states that (\ref{lrc}) does not hold for
two-dimensional systems.

It would be highly desirable to link the properties of the groundstates with the properties
of the Gibbs states.

\subsection{Outline of the proof of Theorem \ref{thm:mainintro}}
In Definition~\ref{defn:adpot} it is assumed that $V_2$ and $V_3$ are suitably
normalized so that $\min_{r>0}\Efcc(r) = \Efcc(1)$ and we define
$$ \estar = \Efcc(1).$$
The upper bound (\ref{upperbound}) is an easy consequence of Theorem~1 in \cite{BLL}.

Our focus, therefore, is to establish the lower bound (\ref{lb}).
For the proof it is advantageous to view \lf{$y\in Y$} as a map.  Therefore we assume that there
exists an index set $X$ such that $\# X = \# Y$ and
$$ Y = \{y(x) \; : \; x \in X\}.$$
Then
\lf{\begin{align} \label{eq:vectnot}
 E(Y)=E(y) = 2\sum_{\{x,x'\}\subset X} V_2(y(x),y(x')) +
6\sum_{\{x,x',x'' \}\subset X} V_3(y(x),y(x'),y(x'')).
\end{align}}
Central to the proof of (\ref{lb}) is the set of (nearest neighbor) edges,
defined by
\begin{align}\label{eq:sdefn}
 \amend{8}{\nnb}:=\left\{(x,x')\in X \times X\; :\;\left|\left|y(x')-y(x)\right|-1\right|
\leq\alpha\right\}.
\end{align}
By construction, $\nnb$ is induced by $y,$ although this dependency is not shown.
The structure induced by $\nnb$ allows us to define defects: A label $x$ is an element of $X_\mathrm{co}$ if the nearest neighbors of $x$ can be mapped
bijectively to the nearest neighbors of the origin in \lf{$\cfcc$} such that nearest
neighbors pairs are mapped to nearest neighbor pairs. The complement of $X_\mathrm{co}$
is called the set of {\em defects} or boundary $\partial
X=X\setminus X_\mathrm{co}.$

The main task is to establish the
the finer estimate
\begin{align} \label{finebound} E(y)\geq \estar\,\#X+C\sum_{(x,x')\in
    \nnb}\left|\left|y(x')-y(x)\right|-1\right|^2+ C\alpha^\frac{1}{2}\#\partial X,
\end{align}
where $C>0$ depends on \amend{3}{$V_2$ and  $V_3$. Inequality (\ref{finebound}) not only implies (\ref{lb}), it also provides insight into the additional structure offered by $\nnb$. We will demonstrate that $C$ can be chosen independently of $\alpha$ and $\# X$.}

The proof of~(\ref{finebound}) is organized around three key concepts: a
geometric analysis of nearest neighbors (c.f. Proposition~\ref{prop:maxnhd});
the construction of a reference configuration which identifies large
\textit{defect-free} patches of the ground state with rotated and translated
subsets of the lattice \lf{$\cfcc$} (c.f.  Proposition~\ref{prop:refconfig}); a
resummation of the energy to recover a lattice energy together with several error
terms (c.f. Section~\ref{sec:energy}).

In Section~\ref{sec:adpot}, we introduce the set of $\alpha-$localized
potentials and derive a lower bound on the distance between ground state
particles (Proposition~\ref{prop:mindist})).

In Section \ref{sec:gsprop}, we characterize the fcc lattice based on
local properties. This characterization allows us to identify arbitrarily
large defect free patches of the ground state which can be identified with subsets of \lf{$\cfcc$} via discrete imbeddings.
Existence results for imbedding are stated in Section~\ref{sec:refconex}.

In Section \ref{sec:rigidity}, we demonstrate the long-range rigidity of the
ground state.  The main result here is Proposition~\ref{prop:distsum}, which
states that the $L^2$ proximity of a ground state deformation gradient to a
rigid rotation is controlled by a quadratic sum of edge length distortions.

In Section~\ref{sec:pairs}, we introduce path sets, they will be used
to bound the interaction energy from below by local expressions.

In Section~\ref{sec:energy} the proof of Theorem~\ref{thm:mainintro} is given.
It relies on the concept of path sets to localize
the long-range interactions. We obtain two types of error terms: energy
contributions arising from the
geometric distortion of bonds; and a surface energy contribution arising from
the omission of individual bonds.  The first error term is controlled by the
rigidity estimates of Proposition~\ref{prop:distsum}, which reduce the
long-range energy contributions to a quadratic sum of edge length distortions.
The second error term is controlled by size of the set of defects.

The main focus of the paper is on the analysis of low energy states of a large
number of particles $\#X$. Since the asymptotic behavior
is discussed only at the end, we suppress any dependency
on $\# X$ in the notation. The letter $C$ \delete{$c$} always denotes a generic positive
constant, which depends on $V_2$ and $V_3$, but not on $\alpha$ or $\#X,$ provided $\alpha$ is
sufficiently small, and whose value may vary from line to line.  A glossary of
the most commonly used notation is included at the end of the appendix.

\section{Admissible potentials}\label{sec:adpot}
It is easy to see that the invariance~(\ref{symmetries})
implies the existence of functions $V:[0,\infty) \to \R$, $\Psi:[0,\infty)^3 \to \R$ such that
\begin{align} \label{potrep}
V_2(y_1,y_2) = V(|y_2-y_1|) \text{ and } V_3(y_1,y_2,y_3)=\Psi(|y_2-y_1|, |y_3-y_2|, |y_1-y_3|) .
\end{align}
The set of admissible potentials of Definition \ref{defn:adpot} are characterized
by a small parameter $\alpha>0.$  The pair potentials are chosen to have growth
behavior similar to that of a Lennard-Jones potential, whilst the three-body
potentials take a generalized form of the Stillinger-Weber potential.
\begin{defn}[$\alpha-$localized potentials]\label{defn:adpot}
Let $\alpha>0$ be a positive parameter.  We say that $(V_2,V_3)$ are
$\alpha$-localized if there exist potentials $(V,\Psi)$ in $Y_\alpha$
such that (\ref{potrep}) holds.
The set $Y_\alpha \subset C^1([0,\infty)) \times C^1
\left([0,\infty)^3\right)$ is defined by the following requirements.
\begin{enumerate}
\item The pair potential $V$ has the properties
$\lim_{r \to \infty} V(r) = 0$, $V$ is normalized in the sense that $V(1)=-1$,
\begin{align} \label{eq:vnorm}
\min_{r>0} \sum_{k \in \lf{\cfcc} \setminus \{0\}} (V(r\,|k|)-V(|k|))=0,
\end{align}
and satisfies the conditions,
\begin{align}
V(\sqrt{8/3})- 3\,V(\sqrt{3}) &\geq \alpha^\frac{1}{2}, \label{fcccond}\\
V(r) &\geq\frac{1}{\alpha}  \text{ for } r\in\left[0,1-\alpha\right],
\label{assump:vone}\\
V''(r) & \geq 1  \text{ for } \ r\in(1-\alpha,1+\alpha),
\label{assump:vtwo}\\
V'(\sqrt{3}) &\geq 0, \label{Vpsign}\\
\label{assump:vfourprimetwo} |V''(r)| & \leq\alpha^\frac{1}{4} \text{ for } r\in
\left[1+\alpha,\sqrt{7/2}\right], \\
\left|V''(r)\right| & \leq \alpha r^{-10} \text{ for }  r\in\left[\sqrt{7/2},\infty\right).\label{assump:vfive}
\end{align}
\item The three-body potential $\Psi$ has the properties
\ft{\begin{align} \label{tbc}
&\min_{r_1,r_2,r_3\geq 0}\Psi(r_1, r_2, r_3) = \Psi(1,1,1)=-1 \text{ and } \Psi(r_1, r_2, r_3) \geq 0 \text{ if } \max_{i}|r_i-1|\geq \alpha,\\
&\Psi(r_1, r_2, r_3) \geq \frac{1}{\alpha} \text{ if } \min_{i}r_i \leq 1-\alpha \text{ and} \max_{i}r_i< \frac{4}{3}, \label{tbl}\\
&\Psi(r_1, r_2, r_3)=0 \text{ if } \max_i r_i \geq \frac{7}{5}. \label{comp-supp}
\end{align}}
\end{enumerate}
\end{defn}
\subsection{Discussion} \label{sec:discussion}
A heuristic argument behind the choice of
admissible potentials is as follows: Assumption (\ref{eq:vnorm}) sets the lattice parameter to 1.  This assumption simplifies the notation and does not involve a loss of generality. The large energies of
short-range bonds created by condition (\ref{assump:vone}) ensures a minimum
distance of $1-\alpha$ between particles (c.f. Proposition~\ref{prop:mindist}).
Conditions (\ref{assump:vone})-(\ref{assump:vtwo}) on $V$
create a sharp, prominent well close to $r=1,$ which favors configurations
which maximize the number of nearest neighbor pairs.
Assumption~(\ref{fcccond}) selects fcc as the
optimal crystalline form, since
\begin{align*}
-2 &= \#\left\{k \in \cfcc \; : \; |k|= \sqrt{8/3}\right\}-
\#\left\{k \in \chcp \; : \; |k|= \sqrt{8/3}\right\},\\
6 &= \#\left\{k \in \cfcc \; : \; |k|= \sqrt{3}\right\}-\#\left\{k \in \chcp \; : \; |k|= \sqrt{3}\right\}.
\end{align*}

Assumption~(\ref{Vpsign}) is of purely technical
nature and is satisfied for Lennard-Jones type potentials.
Assumptions (\ref{assump:vfourprimetwo}) and (\ref{assump:vfive}), which characterize the decay of $V$, entail that medium and long-range interactions are much weaker than the short-range
interactions.

The three-body potential $\Psi$ selects ground states which
maximize the number of edges in each nearest neighborhood. This serves to
reduce the number of nearest-neighbor graph structures down to just
two: the fcc and the hcp crystal lattices (c.f. Theorem~\ref{thm:HTT}).

The main part of the analysis concerns the pair energies.
The role of the three-body potential
is to geometrically determine the optimal crystalline form, by assigning
positive energy contributions to ground states which do not approximate fcc or hcp structures.

The assumptions on the pair potential $V$ are generic in the sense that there exist open subsets $\hat Y_\alpha$ of the weighted space \ft{$C^1_\rho([0,\infty))$}
such that each $V \in \hat Y_\alpha$ satisfies assumptions (\ref{eq:vnorm}) - (\ref{assump:vfive})
after rescaling. On the other hand, the assumptions on the three-body potential $\Psi$ are not generic, i.e. the set of potentials which satisfy (\ref{tbc}), (\ref{tbl}) after rescaling does not contain an open set. The proof of Theorem~\ref{thm:mainintro} can be generalized if the conditions are slightly relaxed so that $Y^\alpha$ is open; this would involve a significant increase of the notational complexity. \ft{Similarly, it is possible to relax \eqref{assump:vtwo} and \eqref{assump:vfive} at the expense of additional assumptions so that $\hat Y_\alpha \subset C^2_\rho$.}

It is conceivable that the dependence of
Theorem~\ref{thm:mainintro} on $V_3$ can be omitted entirely. Although this remains
an open problem, the following conjecture provides a possible route to
eliminate the necessity of $V_3.$
\begin{conj}\label{conj:fcchcp}
Let $Z\subset\re^3$ satisfy that $\left|z'-z\right|\geq 1$ for all $\{z,z'\}\subset Z$ and, for every $z\in Z,$ let $N(z):=\left\{z'\in Z:\left|z'-z\right|=1\right\}.$  If $z,z'$ have the properties $\# N(z)= \# N(z') = 12$ and $z \in N(z')$, then $\#(N(z)\cap N(z'))\geq 4$.
\end{conj}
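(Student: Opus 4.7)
After translating so that $z=0$ and rotating so that $z'=(0,0,1)$, write $N(z)=\{z'\}\cup\{v_1,\ldots,v_{11}\}$ and $N(z')=\{z\}\cup\{z'+u_1,\ldots,z'+u_{11}\}$ with $v_i,u_j\in S^2$. The conditions $|v_i-v_{i'}|\ge 1$, $|u_j-u_{j'}|\ge 1$, $|v_i-z'|\ge 1$ and $|(z'+u_j)-z|\ge 1$ translate to $v_i\cdot v_{i'}\le\tfrac12$, $u_j\cdot u_{j'}\le\tfrac12$, $v_{i,3}\le\tfrac12$ and $u_{j,3}\ge-\tfrac12$. A point lies in $N(z)\cap N(z')$ if and only if it lies on the bisecting circle $C=\{v\in S^2:v_3=\tfrac12\}$, with the two parametrizations linked by $v_i=z'+u_j$ there. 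Writing $k=\#(N(z)\cap N(z'))$, an elementary chord-length calculation on $C$ gives the easy upper bound $k\le 5$, so the task is to exclude $k\in\{0,1,2,3\}$.

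\textbf{Main idea: coupled perturbation.} Assume $k\le 3$ for contradiction. The constraint $v_i\cdot z'\le\tfrac12$ is active precisely at the $k$ common neighbors, so on the tangent plane $T_{z'}S^2\simeq\R^2$ only the $k$ projections of the common neighbors bind. Whenever $k\le 2$, or $k=3$ and the three projections do not positively span $\R^2$ (i.e., the three common neighbors lie within an arc of $C$ of length less than $\pi$), there exists a direction $\tau\in T_{z'}S^2$ such that the tilt $\tilde z'=\cos\varepsilon\,z'+\sin\varepsilon\,\tau$ produces a $12$-point configuration $\{\tilde z',v_1,\ldots,v_{11}\}$ on $S^2$ in which every pairwise angular distance is \emph{strictly} greater than $\pi/3$. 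A compatible symmetric tilt works on the $u_j$-side because the active constraints $u_{j,3}\ge-\tfrac12$ come from the same $k$ common neighbors. One would then attempt to insert a $13$th point at distance exactly one from $z$ in the open region created by the tilt, contradicting the $3$-dimensional kissing number $=12$.

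\textbf{Equilateral sub-case.} The remaining sub-case $k=3$, in which the three common neighbors are positioned at approximately $120^\circ$ intervals on $C$ and their tangent-plane projections positively span $\R^2$, admits no such tilt. Here I would argue by direct geometry: with only $8$ non-common neighbors on each side, the $11$-point configuration $\{v_1,\ldots,v_{11}\}$ (confined to $S^2\setminus\{v:v_3>\tfrac12\}$) is combinatorially very constrained. Combined with the analogous constraint on the $u_j$ and the coupling at the three common neighbors, the finitely many admissible possibilities could be enumerated and each shown either to force $k\ge 4$ or to violate a pairwise distance constraint.

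\textbf{Main obstacle.} The crux, and the reason the statement is left as a conjecture, is the passage from a strict $12$-point kissing configuration to the existence of a $13$th kissing vector. The icosahedral configuration shows that strict pairwise inequalities alone are insufficient: it is strictly rigid yet contains no pair at angular distance exactly $\pi/3$, so the perturbation strategy above would wrongly produce a contradiction. Escaping this requires essential use of the hypothesis $\#N(z')=12$ to exclude icosahedral-type rigid arrangements around $z$, and at present no direct quantitative mechanism is evident. A proof would likely require either a sharp classification of $12$-point kissing configurations on $S^2$, or a Delsarte-style linear-programming bound refined to handle two adjacent kissing configurations simultaneously---both going beyond the techniques of the present paper.
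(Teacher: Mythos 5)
The statement you were asked about is not proved in the paper at all: it appears as Conjecture~\ref{conj:fcchcp}, explicitly labelled an open problem whose resolution would (together with Theorem~\ref{thm:HTT}) allow the three-body potential to be dropped. So there is no paper proof to match, and the only question is whether your argument actually establishes the statement. It does not, and you say so yourself. The central mechanism of your ``coupled perturbation'' — tilt $z'$ so that all twelve points of $N(z)$ are pairwise at angular distance strictly greater than $\pi/3$ from each other and from the tilted $z'$, then insert a thirteenth unit vector to contradict $k(3)=12$ — has no justification at the decisive step: the kissing bound forbids thirteen points at mutual angular distance $\geq\pi/3$, but twelve points with \emph{strict} pairwise inequalities need not leave room for a thirteenth (the icosahedral configuration, which you cite, has all pairwise angular distances about $63.4^\circ$ and admits no thirteenth point). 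Thus strictness alone yields no contradiction, and no quantitative surrogate (a lower bound on the slack that would guarantee an empty spherical cap of angular radius $\pi/3$) is supplied. In addition, the tilt replaces $z'$ by a point $\tilde z'$ that is no longer in $Z$ and no longer at distance one from the points $z'+u_j$, so the hypothesis $\#N(z')=12$ — which is exactly what should distinguish this situation from an icosahedral neighborhood of $z$ — is not actually brought to bear; the remark that ``a compatible symmetric tilt works on the $u_j$-side'' does not explain how the two neighborhoods are coupled into a single contradiction.

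The remaining branch ($k=3$ with the three common neighbors spread so that their tangent-plane projections positively span) is disposed of only by the assertion that ``the finitely many admissible possibilities could be enumerated,'' with no enumeration, no finiteness argument (the configuration space of the eleven non-common neighbors is a continuum, so some compactness/rigidity reduction would be needed before any enumeration), and no criterion that each case violates. In short, what you have written is an honest sketch of a strategy together with a correct diagnosis of why it fails; it is not a proof, and the statement remains open, exactly as the paper says. The preliminary reductions you do carry out (normalizing $z=0$, $z'=(0,0,1)$, identifying common neighbors with the active constraints on the circle $v_3=\tfrac12$, and the chord-length bound $\#(N(z)\cap N(z'))\leq 5$) are correct and would be a reasonable starting point for any future attack, but the gap from there to $\#(N(z)\cap N(z'))\geq 4$ is the whole content of the conjecture.
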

Together with Theorem \ref{thm:HTT}, Conjecture \ref{conj:fcchcp} implies that $\#(N(z)\cap N(z'))=4$ for all $z,z'\in Z$.
It is not hard to see that up to rotation there are only two subsets of $\sx$ with 12 points such that each point has precisely 4 neighbors:
the cuboctahedron and the twisted cuboctahedron \lf{(defined by eqn. (\ref{eq:cub})).}

\subsection{Results concerning admissible potentials} \label{adres}
For every $r>0,$ the \textit{renormalized energy} $\Efcc(r)$ assigns an average energy per particle to the homogeneously deformed lattice \lf{$r\cfcc.$}

\begin{defn}\label{defn:renorm}
Let $(V,\Psi) \in Y_\alpha$ for some $\alpha>0$. The associated
renormalized pair potential  $V^*$ is defined by
\begin{eqnarray}\label{eq:renormdefn}
V^*(r):= \lf{\sum_{k\in\cfcc\setminus \{0\}}}V(r\left|k\right|) \ \textnormal{for all} \ r>0.
\end{eqnarray}
Recall also the definition $$\Efcc(r):=V^*(r)+2\sum_{\{y,y'\}\subset\cfcc\setminus\{0\}}\Psi(r|y|,r|y'|,r|y-y'|).$$
We call $\Efcc$ the \textit{renormalized energy per particle}.
\end{defn}
Note that assumptions~(\ref{eq:vnorm}) and (\ref{tbc})
and the fact that
$$\amend{4}{ \#\{(k_1,k_2)\in \cfcc \times \cfcc \; : \; |k_1|=|k_2| = |k_2-k_1|=1\} = 48}$$
imply the identity
$$ \min_{r} \Efcc(r) = \Efcc(1)=V^*(1) + 48\, \Psi(1,1,1).$$
\begin{lemma}[Equilibrium condition]\label{lemma:renormmin}
If $(V,\Psi) \in Y_\alpha$ then
\begin{eqnarray}\label{eq:renormmin}
\lf{\sum_{k\in\cfcc\setminus\{0\}}}\left|k\right|V'(\left|k\right|)=0.
\end{eqnarray}
\end{lemma}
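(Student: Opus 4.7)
The plan is to derive \eqref{eq:renormmin} as a first-order optimality condition for the normalization assumption \eqref{eq:vnorm}. By definition \eqref{eq:renormdefn}, the quantity inside the minimum in \eqref{eq:vnorm} is exactly $V^\ast(r)-V^\ast(1)$, so \eqref{eq:vnorm} says that $r=1$ is a global minimizer of the smooth function $V^\ast:(0,\infty)\to\R$. Hence, provided $V^\ast$ is differentiable at $r=1$, we obtain $(V^\ast)'(1)=0$, which is precisely \eqref{eq:renormmin} after termwise differentiation.

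First I would justify the termwise differentiation of the lattice sum. For $r$ in a fixed neighborhood of $1$, say $r\in[\tfrac12,2]$, the tail estimate \eqref{assump:vfive}, together with $\lim_{r\to\infty}V(r)=0$, yields bounds of the form $|V'(s)|\leq C s^{-9}$ and $|V''(s)|\leq C s^{-10}$ for all sufficiently large $s$; the contribution of bounded $s$ is handled by the $C^1$ regularity assumption. Hence $|k|\,|V'(r|k|)|$ is dominated by $C|k|^{-8}$ for all large enough $|k|$, uniformly in $r$. Since the number of lattice vectors in $\cfcc$ with $|k|\in[R,R+1]$ grows only like $O(R^2)$, the series $\sum_{k\in\cfcc\setminus\{0\}}|k|\,V'(r|k|)$ converges absolutely and uniformly on this neighborhood, and likewise for the formal second derivative. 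Standard results on differentiation under the sum then give that $V^\ast$ is $C^1$ on a neighborhood of $1$ and
\begin{align*}
(V^\ast)'(r) \;=\; \sum_{k\in\cfcc\setminus\{0\}} |k|\,V'(r|k|).
\end{align*}

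Evaluating at the interior minimizer $r=1$ yields $\sum_{k\in\cfcc\setminus\{0\}} |k|\,V'(|k|) = 0$, which is \eqref{eq:renormmin}. The only subtlety is the uniform-convergence step needed to legitimize interchanging derivative and infinite sum; this is exactly what the tail assumption \eqref{assump:vfive} is designed to provide, so the main obstacle is merely a careful bookkeeping of constants rather than any conceptual difficulty.
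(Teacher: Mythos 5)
Your proof is correct and takes essentially the same approach as the paper, which simply observes that the normalization \eqref{eq:vnorm} says $r=1$ minimizes $V^\ast$ and hence $(V^\ast)'(1)=0$. You have additionally supplied the (routine but omitted in the paper) justification for differentiating the lattice sum term by term via the decay estimate \eqref{assump:vfive}.
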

\begin{proof}
The normalization assumption (\ref{eq:vnorm}) implies that $(\Vstar)'(1)=0$. This is
(\ref{eq:renormmin}).
\end{proof}
\begin{lemma}\label{lemma:assumptions}
Let $(V,\Psi) \in Y_\alpha$.
There exists $\alpha_0>0$ such that for every $\alpha\in(0,\alpha_0)$, $r>0$ the following estimates hold:
\begin{align}
 |V'(r)| & \leq \alpha^\frac{1}{4} \text{ for } r\in \left[1+\alpha, \sqrt{7/2}\right], \label{assump:Vlip}\\
|V(r)| &\leq \alpha^\frac{1}{4} \text{ for } r \in \left[1+\alpha,\sqrt{{7}/{2}}\right], \label{assump:vzero}\\
\left|V(r)\right| &\leq \alpha r^{-8} \ \textnormal{for all} \ r\geq \sqrt{7/2},\label{eq:assumptwo}\\
V(r) & \geq-2 \ \textnormal{for all} \ r\geq 0.\label{eq:assumpthree}
\end{align}
\end{lemma}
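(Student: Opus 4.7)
The plan is to upgrade the $V''$-bounds of Definition~\ref{defn:adpot} to bounds on $V'$ and $V$ by successive integration from infinity, and then to use the equilibrium identity of Lemma~\ref{lemma:renormmin} to control $V$ on the well $(1-\alpha,1+\alpha)$, where none of the other assumptions supply any pointwise lower bound.

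First I would show that $V'(r)\to 0$ as $r\to\infty$: the assumption $|V''(r)|\leq \alpha r^{-10}$ makes $V''$ integrable on $[\sqrt{7/2},\infty)$, so $V'$ admits a limit, and that limit must vanish because otherwise $V(r)$ would be asymptotically linear, contradicting $V(r)\to 0$. Integrating $V''$ and then $V'$ from infinity yields $|V'(r)|\leq \alpha/(9r^{9})$ and $|V(r)|\leq \alpha/(72\,r^{8})\leq \alpha\,r^{-8}$ on $[\sqrt{7/2},\infty)$, which is~(\ref{eq:assumptwo}). Integrating $|V''|\leq \alpha^{1/4}$ from $r\in[1+\alpha,\sqrt{7/2}]$ up to $\sqrt{7/2}$ and combining with the bound on $|V'(\sqrt{7/2})|$ gives
\[
|V'(r)|\leq \tfrac{\alpha}{9(\sqrt{7/2})^{9}}+\alpha^{1/4}(\sqrt{7/2}-1);
\]
since $\sqrt{7/2}-1<1$, the right-hand side is at most $\alpha^{1/4}$ once $\alpha$ is sufficiently small, giving~(\ref{assump:Vlip}). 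A further integration of this bound from $r$ to $\sqrt{7/2}$ delivers~(\ref{assump:vzero}) in the same way.

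For the lower bound (\ref{eq:assumpthree}), the inequality $V(r)\geq -2$ is immediate from~(\ref{assump:vone}) on $[0,1-\alpha]$ and from~(\ref{assump:vzero}) and~(\ref{eq:assumptwo}) on $[1+\alpha,\infty)$, so the only nontrivial region is the well $(1-\alpha,1+\alpha)$. Here $V(1)=-1$ and $V''\geq 1$, so Taylor expansion around $r=1$ gives
\[
V(r)\geq -1 + V'(1)(r-1) + \tfrac{1}{2}(r-1)^{2}\geq -1-|V'(1)|\,\alpha,
\]
and it remains to establish $|V'(1)|\leq C\alpha^{1/4}$. For this I would invoke Lemma~\ref{lemma:renormmin}, rearranged as
\[
12\,V'(1) = -\sum_{k\in\cfcc\setminus\{0\},\,|k|>1}|k|\,V'(|k|).
\]
Splitting the sum into the finitely many lattice vectors with $|k|\in\{\sqrt{2},\sqrt{3}\}$, contributing at most $(6\sqrt{2}+24\sqrt{3})\alpha^{1/4}$ by~(\ref{assump:Vlip}), and the tail $|k|\geq\sqrt{7/2}$, contributing at most $\tfrac{\alpha}{9}\sum_{k}|k|^{-8}=O(\alpha)$ by convergence of the corresponding lattice sum, yields $|V'(1)|\leq C\alpha^{1/4}$. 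Therefore $V(r)\geq -1-C\alpha^{5/4}\geq -2$ on the well, provided $\alpha$ is small.

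The main obstacle is this last step: Definition~\ref{defn:adpot} says nothing pointwise about $V$ or $V'$ on the well, so control over $V'(1)$ must be extracted indirectly from the global normalization~(\ref{eq:vnorm}) via Lemma~\ref{lemma:renormmin}; everything else is a straightforward integration of the prescribed $V''$-bounds.
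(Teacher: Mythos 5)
Your reconstruction is correct: the first three bounds follow by integrating (\ref{assump:vfourprimetwo}) and (\ref{assump:vfive}) back from infinity (with $V(\infty)=0$ fixing the constants of integration), and the well $(1-\alpha,1+\alpha)$ is the only genuinely delicate region, where your use of the equilibrium identity of Lemma~\ref{lemma:renormmin} is exactly what the paper's citation of (\ref{eq:vnorm}) among the assumptions (\ref{eq:vnorm})--(\ref{assump:vfive}) points at.

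Two remarks, however. First, the equilibrium identity is not actually needed for the well, so the phrase ``must be extracted indirectly from (\ref{eq:vnorm})'' overstates the situation: by (\ref{assump:vtwo}) the function $V$ is convex on $(1-\alpha,1+\alpha)$, so the tangent at $1+\alpha$ gives $V(r)\geq V(1+\alpha)+V'(1+\alpha)(r-1-\alpha)$, and combined with the already-proved (\ref{assump:vzero}) and (\ref{assump:Vlip}) this yields $V(r)\geq -\alpha^{1/4}-2\alpha^{5/4}$, far stronger than $V\geq-2$. Second, that tangent bound evaluated at $r=1$ reads $-1=V(1)\geq -\alpha^{1/4}-2\alpha^{5/4}$, which is false for small $\alpha$. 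Equivalently: $V'$ is nondecreasing on $(1,1+\alpha)$, and (\ref{assump:Vlip}) gives $V'(1+\alpha)\leq\alpha^{1/4}$, so $V(1+\alpha)-V(1)=\int_1^{1+\alpha}V'\leq\alpha\cdot\alpha^{1/4}$, i.e.\ $V(1+\alpha)\leq -1+\alpha^{5/4}$, contradicting (\ref{assump:vzero}). So the hypotheses (\ref{assump:vtwo}), (\ref{assump:vfourprimetwo}), (\ref{assump:vfive}), together with $V(1)=-1$ and $V(\infty)=0$, force $Y_\alpha=\emptyset$ once $\alpha$ is small, and the lemma as written is vacuously true: a $C^1$ function cannot climb from $-1$ to $O(\alpha^{1/4})$ over an interval of width $\alpha$ while convex with $V'(1+\alpha)=O(\alpha^{1/4})$. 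This is a defect in Definition~\ref{defn:adpot} rather than in your proof --- most likely the $\alpha^{1/4}$ in (\ref{assump:vfourprimetwo}) is meant to be a negative power of $\alpha$, which would also weaken the exponent in (\ref{assump:Vlip}) and (\ref{assump:vzero}) --- and your argument is structural enough that it would carry over to any such correction.
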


\begin{proof}
The proof follows immediately from assumptions (\ref{eq:vnorm})-(\ref{assump:vfive}).
\end{proof}

\begin{prop}[Minimum distance bound]\label{prop:mindist}
Let $(V,\Psi) \in Y_\alpha$. There exists $\alpha_0>0$ such that if $\alpha\in(0,\alpha_0)$ then any ground state $y:X\rightarrow\re^3$ of the associated energy (\ref{eq:energy}) satisfies the minimum distance bound
\begin{eqnarray}\label{eq:mindist}
\min_{\atop{x,x'\in X}{x\neq x'}}\left|y(x')-y(x)\right|>1-\alpha.
\end{eqnarray}
\end{prop}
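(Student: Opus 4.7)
The plan is a local-modification argument: I would suppose for contradiction that there exist $x_0, x_0' \in X$ with $d := |y(x_0) - y(x_0')| \leq 1-\alpha$, and then show that the energy strictly decreases upon relocating $y(x_0)$ to infinity, contradicting the ground-state property. Explicitly, define $\tilde y : X \to \re^3$ by $\tilde y(x) = y(x)$ for $x \neq x_0$ and $\tilde y(x_0) = z$. By the decay estimate (\ref{eq:assumptwo}) and the compact support (\ref{comp-supp}), sending $|z| \to \infty$ renders every contribution involving the relocated particle negligible, giving
\[
E(y) - \lim_{|z| \to \infty} E(\tilde y) \;=\; A \;:=\; 2\!\sum_{x' \in X \setminus\{x_0\}}\! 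V(r_{x'}) \;+\; 6\!\!\sum_{\{x',x''\} \subset X \setminus \{x_0\}}\!\! V_3\bigl(y(x_0),y(x'),y(x'')\bigr),
\]
with $r_{x'} := |y(x') - y(x_0)|$. The ground-state inequality $E(y) \leq E(\tilde y)$ forces $A \leq 0$, so the proof reduces to showing $A > 0$ for $\alpha$ sufficiently small.

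The decisive positive contribution is the close-pair term $2V(d) \geq 2/\alpha$ by (\ref{assump:vone}), which I would show dominates every remaining contribution. For the pair sum, I split the range of $r_{x'}$: on $\{r \leq 1-\alpha\}$ one has $V \geq 1/\alpha \geq 0$ by (\ref{assump:vone}), so the contribution is nonnegative and in fact helpful; on $(1-\alpha,1+\alpha]$ we have $V \geq -2$ by (\ref{eq:assumpthree}); on $[1+\alpha,\sqrt{7/2}]$ we have $|V| \leq \alpha^{1/4}$ by (\ref{assump:vzero}); and on $[\sqrt{7/2},\infty)$ we have $|V| \leq \alpha r^{-8}$ by (\ref{eq:assumptwo}). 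Since every pair in $X$ has distance at least the global minimum $d$, a standard ball-packing estimate bounds the number of particles in each shell. For the three-body sum, $V_3 \geq 0$ except on triples whose three side-lengths all lie in $[1-\alpha,1+\alpha]$, where $V_3 \geq -1$; the compact support (\ref{comp-supp}) moreover confines all nonvanishing triples involving $x_0$ to a bounded neighborhood, so the number of relevant triples is again controlled by packing.

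The main obstacle is uniformity of the bounding constant in the global minimum distance $d$, since naive ball-packing inequalities degenerate as $d \to 0$. I would resolve this with a two-case analysis. If $d$ is bounded below by a fixed constant (say $d \geq 1/2$), the packing bounds are uniform and give $A \geq 2/\alpha - C$ for an absolute constant $C$, which is strictly positive for $\alpha < 2/C$. If $d < 1/2$, the high density near $x_0$ forces many additional particles within distance $1-\alpha$ of $x_0$, each contributing an extra $2/\alpha$ to the pair sum via (\ref{assump:vone}); a careful comparison shows these additional positive contributions outweigh the correspondingly larger negative contributions from the thin shell $(1-\alpha, 1+\alpha]$ (whose cardinality grows only polynomially in $1/d$) and from the $V_3$-triples concentrated there. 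Hence $A > 0$ in all regimes for $\alpha$ sufficiently small, completing the contradiction and establishing (\ref{eq:mindist}).
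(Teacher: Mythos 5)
Your reduction is sound as far as it goes: comparing $E(y)$ with the configuration in which $y(x_0)$ is sent to infinity does show that the total interaction $A$ of $x_0$ with the remaining particles is nonpositive at a ground state, and the close pair supplies $2V(d)\ge 2/\alpha$ via (\ref{assump:vone}). The genuine gap is in your second case, $d<1/2$. There you assert that high density near $x_0$ ``forces many additional particles within distance $1-\alpha$ of $x_0$, each contributing an extra $2/\alpha$''. Nothing forces this: $x_0'$ may be the only particle inside $B(y(x_0),1-\alpha)$, while an arbitrarily dense cluster (pairwise distances comparable to the global minimum distance, which at this stage is completely uncontrolled) sits in the shell $\{1-\alpha<|y(x')-y(x_0)|\le 1+\alpha\}$ or in the range $[1+\alpha,\sqrt{7/2}]$. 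Those particles enter $A$ only through $V\ge -2$, resp. $|V|\le\alpha^{1/4}$, so their total contribution to $A$ can be of order $-(\alpha+d_{\min})/d_{\min}^{3}$, while their huge mutual repulsion is internal to $X\setminus\{x_0\}$ and cancels in $E(y)-E(\tilde y)$; the three-body terms you only bound below by $-1$ per near-unit triple, and the number of such triples is again controlled only by the unknown packing scale. Hence the ``careful comparison'' at the heart of case 2 is precisely the missing step, and with the quantities you introduced it cannot be carried out: the positive side of your ledger is a single $2/\alpha$, the negative side is unbounded.

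The paper closes this loop with a self-referential density argument rather than a single-particle surgery. It sets $M:=\max_{\eta}\#\left(y(X)\cap B(\eta,\half(1-\alpha))\right)$, removes the entire densest cluster $\mathcal{A}$ at once, so that the positive resource is the internal repulsion $\frac{1}{\alpha}M(M-1)$ coming from (\ref{assump:vone}), and bounds the cluster--environment interaction by covering spherical shells around $\mathcal{A}$ with translated copies of the same ball $B(\eta,\half(1-\alpha))$, each of which contains at most $M$ particles by the maximality of $M$. Consequently every negative pair term is $O(M^2)$ with constants independent of $\alpha$ and of the configuration, and the three-body contributions are shown via (\ref{tbc}) and (\ref{tbl}) to be harmless (indeed positive when $M>1$). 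The resulting inequality $\frac{1}{2\alpha}M(M-1)\le C M^2$ forces $M=1$ for small $\alpha$, which is (\ref{eq:mindist}). Expressing the packing constant in terms of the very quantity being estimated is the device your proposal lacks; some such mechanism is needed to obtain uniformity in the unknown minimum distance.
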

\begin{proof}
Let $M:=\max_{\eta\in\re^3}\#\left(y(X)\cap B(\eta,\half(1-\alpha))\right)$ and assume wlog that the maximum is achieved at $\eta=0.$  Set $B_M:=B(0,\half(1-\alpha))$ and $\mathcal{A}:=y^{-1}(B_M).$  We aim to show that $M=1.$  Assumption (\ref{assump:vone}) implies
\begin{eqnarray}\label{eq:mindistone}
 \sum_{\atop{x,x'\in \mathcal{A}}{x\neq x'}}V(\left|y(x')-y(x)\right|)\geq\frac{1}{\alpha}M(M-1).
\end{eqnarray}
By moving the positions $y(\mathcal{A})$ to infinity in such a way that their mutual distances diverge, \lf{we obtain}
\begin{align}
 &\sum_{\atop{x\in\mathcal{A}}{x'\in X\backslash\mathcal{A}}}V(\left|y(x')-y(x)\right|)
 +\sum_{\atop{x\in \mathcal A}{(x_1,x_2)\in X^2\setminus\mathcal{A}^2}} \Psi(|y(x)-y(x_1)|,|y(x)-y(x_2)|, |y(x_1)-y(x_2|). \\
 \leq&-\frac{1}{2\alpha}M(M-1)\label{eq:mindistancetwo}
\end{align}
For each $d\geq 0,$ let $\mathcal{T}(d):=y^{-1}\left(2(d+1)\,B_M\backslash 2d\,B_M\right)$ and $n(d):=\#\mathcal{T}(d).$
If $\amend{5}{1\leq d\leq 3}$, then (\ref{eq:assumpthree}) implies
\begin{eqnarray}\label{eq:mindistfour}
 \sum_{\substack{x\in\mathcal{A}\\ x'\in\mathcal{T}(d)}}V(\left|y(x')-y(x)\right|)\geq-2n(d)M.
\end{eqnarray}
For the long-range interactions the decay estimate~(\ref{eq:assumptwo}) implies for
sufficiently small $\alpha_0>0$, $\alpha \in (0,\alpha_0)$ and $d\geq 4$ that
\begin{eqnarray}\label{eq:mindistthree}
 \sum_{x\in\mathcal{A}}\sum_{x'\in\mathcal{T}(d)}V(\left|y(x')-y(x)\right|)\geq-C\,n(d)\,M\alpha
 \left((d-1)(1-\alpha)\right)^{-8}.
\end{eqnarray}
There exists a constant $C>0$ such that for any $d\geq 0,$ \ $\t(d)$ can be covered by $C(d+1)^2$ translated copies of $B_M$, which implies that $n(d)\leq CM(d+1)^2.$  Consequently.
\begin{eqnarray}\label{eq:mindistthree2}
 \sum_{x\in\mathcal{A}}\sum_{x'\in\mathcal{T}(d)}V(\left|y(x')-y(x)\right|)\geq -CM^2\left(\sum_{d=1}^3(d+1)^2+\frac{\alpha}{(1-\alpha)^8}\sum_{d=4}^{\infty}\frac{(d+1)^2}{(d-1)^8}\right).
\end{eqnarray}

To study the three-body interactions we define
$$
  e_3(x):=2\sum_{\{x_1,x_2\}\subset  X\setminus\{x\}}V_3(y(x),y(x_1),y(x_2)).
$$
and $K>0$ by the requirement that the set
$$ \{z:1-\alpha\leq |z|\leq 1+\alpha\}\subset \R^3$$ can be covered with $K$ translated copies of $B_M$.
As we are interested in the cases where $\alpha<1$ the constant $K$ can be chosen independently from $\alpha$. Assumptions (\ref{tbc}) and (\ref{tbl}) imply that
\amend{6}{\begin{align*}
e_3(x)\geq&\sum_{x'\in N(x)}\biggl( \Psi(1,1,1)\#\{x'' \; : \; (x,x''), (x',x'') \in \nnb\}\\
&\hspace{3cm}+\frac{1}{\alpha}\#\{x'' \; : \; y(x'') \in \mathcal A \setminus y(x)\}\biggr)\\
\geq& \sum_{x' \in N(x)}\left( K\,M\,\Psi(1,1,1)\,+ \frac{1}{\alpha}(M-1)\right).
\end{align*}}
If $\alpha < (2\,K)^{-1}$ and $M>1$, then $e_3(x)>0$.

 Consequently, comparing (\ref{eq:mindistthree2}) with (\ref{eq:mindistancetwo}), we obtain
\begin{eqnarray}\label{eq:mindistfive}
 \amend{7}{-CM^2\left(\sum_{d=1}^3(d+1)^2+\frac{\alpha}{(1-\alpha)^8}\sum_{d=4}^{\infty}\frac{(d+1)^2}{(d-1)^8}\right)\leq-\frac{1}{2\alpha}M(M-1).}
\end{eqnarray}
Since the left-hand side remains bounded as $\alpha$ tends to 0, we deduce that (\ref{eq:mindistfive}) can only hold for all $\alpha\in (0,\alpha_0)$ if $M=0$ or $1.$  Since $X$ is non-empty, we conclude that $M=1.$

\end{proof}

\section{Discrete reference configurations}\label{sec:gsprop}
A key step towards the proof of (\ref{finebound}) is the development of the concept of
a discrete reference configuration, which allows us to identify
parts of the configuration $\{y(x) \; | \;  x \in X\}$ as images of maps
$u: \omega \to \R^3$ with $\omega \subset \lf{\cfcc}$. We require a characterization of the crystal lattices $\cfcc$
and $\c_\mathrm{hcp}$ which is based on local properties of
the point configuration.
Neither eqn.~(\ref{eq:fccdefn}) nor the stacking sequence are useful
for our purposes.

\subsection{Nearest neighborhood geometry}\label{sec:localstructure}

\subsubsection{Local geometry of the fcc and hcp crystal lattices}
\label{sec:locallattice}
\amend{8}{Throughout this paper, we denote by $\sx\subset\re^3$ the unit sphere centered at the origin.}  The cuboctahedron and twisted cuboctahedron are defined by the relations
\begin{align}\label{eq:cub}
 \cub:=\cfcc\cap \sx  \text{ and } \tcub:=\chcp\cap \sx.
\end{align}
The surfaces of both convex hulls consist of twelve vertices,
twenty-four edges and fourteen faces, eight of which are equilateral triangles and six
of which are squares. Notice that $\tcub$ is in fact a cuboctahedron in
which a triangular face is rotated by an angle of $\pi/3,$ about its center and in the
plane of the triangle.
We will also use the octahedron
\begin{align*}
 \oct:=\cfcc\cap B\left(2^{-\frac{1}{2}}, 2^{-\frac{1}{2}}(1,0,0)^T\right).
\end{align*}
\begin{prop} \label{fccchar}
Let $\c' \subset \R^3$ be a set  with the property that for each $z \in \c'$
\begin{enumerate}
\item $\left|z-z'\right|\geq 1$ for all $z'\in\c' \setminus\{z\}$.
\item There are exactly 12 points $z'\in\c'$ such
  that $\left|z-z'\right|=1.$
\item There are exactly 48 pairs $z_1,z_2 \in \c'$ such that
  $\left|z_1-z_2\right|=\left|z-z_i\right|=1$ for $i\in\{1,2\}.$
\item There are exactly 48 pairs $z_1, z_2\in\c'$ such that
 $\left|z_1-z_2\right|=\sqrt{3}$ and $|z_i - z|=1$ for $i \in \{1,2\}$.
\end{enumerate}
Then there exists a translation $t \in \R^3$ and a rotation $R \in SO(3)$ such
that $R \c' +t = \cfcc$.
\end{prop}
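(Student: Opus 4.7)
The plan is to prove the proposition in three stages: first classify the local twelve-point neighborhood, then use condition~(4) to eliminate the hcp alternative, and finally propagate the local rigidity to a global congruence.

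First, conditions (1)--(2) place the twelve unit neighbors of any $z\in\c'$ on the unit sphere $z+\sx$, and condition (3), which asserts that these twelve points form a graph with $24$ unit-distance edges (the $48$ ordered pairs count each edge twice), forces the induced configuration to be an edge-maximal spherical code. By the classification of three-dimensional maximal kissing configurations (Sch\"utte--van der Waerden / Leech--Musin, the underlying fact behind Theorem~\ref{thm:HTT}), the set $N(z)-z:=\{z'-z:z'\in\c',\,|z'-z|=1\}$ must be congruent in $O(3)$ to either the cuboctahedron $\cub$ or the twisted cuboctahedron $\tcub$.

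Next I would use condition~(4) to rule out $\tcub$. I split the twelve vertices of either polytope into a top triangle, a middle hexagon and a bottom triangle along the axis of $3$-fold rotational symmetry. The $\pi/3$ twist defining $\tcub$ acts only on the bottom triangle as a whole; it leaves each layer internally unchanged and, because the middle hexagon is $60^{\circ}$-rotationally symmetric, it also preserves the top--middle and middle--bottom distance multisets. The only discrepancy lies in the top--bottom pairs: in $\cub$ each top vertex is antipodal to one bottom vertex (distance $2$) and at distance $\sqrt 3$ from the remaining two, producing six top--bottom pairs at distance $\sqrt 3$; in $\tcub$ each top vertex is directly above one bottom vertex (distance $\sqrt{8/3}$) and offset from the other two (distance $\sqrt{11/3}$), producing none. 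Hence $\cub$ has exactly $24$ unordered (equivalently $48$ ordered) pairs at distance $\sqrt 3$, matching~(4), while $\tcub$ has only $18$. Condition~(4) therefore forces $N(z)-z\cong\cub$ for every $z\in\c'$.

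Finally I would propagate the local rigidity. Fix $z_0\in\c'$ and choose $(R,t)\in SO(3)\times\R^3$ sending $z_0\mapsto 0$ and $N(z_0)$ onto the twelve nearest neighbors of $0$ in $\cfcc$. For each $z_1\in N(z_0)$, the intersection $N(z_0)\cap N(z_1)$ consists of exactly $4$ points (the common nearest neighbors of two adjacent fcc sites, forming a rectangle perpendicular to $z_1-z_0$ through its midpoint); these together with $z_0$ provide $5$ known vertices of the cuboctahedron around $z_1$. The stabilizer of this five-vertex subset inside the cuboctahedron's symmetry group preserves the cuboctahedron \emph{as a set}, so the remaining seven vertices of $N(z_1)$ are forced to coincide with the twelve nearest neighbors of $Rz_1+t$ in $\cfcc$. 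Iterating this identification along the nearest-neighbor graph of $\c'$, which is connected because every vertex has degree $12$, yields $R\c'+t\subseteq\cfcc$, and equality follows because both sides share identical twelve-point neighborhoods everywhere.

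The step I expect to be the main obstacle is the second one. One must verify not just that the twist destroys the six top--bottom $\sqrt 3$-pairs of $\cub$, but also that it creates no compensating $\sqrt 3$-pairs elsewhere (for instance by accidentally bringing bottom--middle or bottom--bottom distances down to $\sqrt 3$). This is handled by the elementary but careful distance computation in the $(1,1,1)$-layered coordinate system outlined above, which must be carried out for each of the six vertex-pair classes separately in $\cub$ and $\tcub$. Once the local structure is pinned to $\cub$, the global stage is a standard rigidity argument based on the cuboctahedron's symmetry group, and the first stage is a direct appeal to the classical kissing classification.
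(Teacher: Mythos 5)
Your route coincides with the paper's own proof: conditions 1--3 plus Theorem~\ref{thm:HTT} force every neighborhood $(\sx+z)\cap\c'$ to be a rotated cuboctahedron or twisted cuboctahedron, condition 4 then selects $\cub$, and a neighbor-by-neighbor induction yields the global identification. You usefully make explicit two things the paper leaves implicit: the count of $\sqrt3$-pairs inside the two candidate neighborhoods ($48$ ordered pairs for $\cub$ versus $36$ for $\tcub$, with the top--bottom distances $\sqrt{8/3}$ and $\sqrt{11/3}$ after the twist), which is exactly why ``Property 4 then selects the cuboctahedron''; and the local rigidity behind the paper's ``by induction one can see'', namely that a cuboctahedron with prescribed center is determined by one vertex together with its four cuboctahedral neighbors (your ``stabilizer'' phrasing is loose --- the relevant statement is this uniqueness, which follows because the five points and the center affinely span $\re^3$ --- but the claim itself is correct, and $N(z_0)\cap N(z_1)$ does consist of exactly those four common neighbors).

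The one step that fails as justified is the connectivity claim: ``the nearest-neighbor graph of $\c'$ is connected because every vertex has degree $12$'' is not a valid implication, since a $12$-regular graph can perfectly well be disconnected, and hypotheses 1--4 are purely local and do not by themselves forbid $\c'$ from splitting into far-apart pieces. What your induction actually proves is that each connected component of the nearest-neighbor graph is a complete rotated and translated copy of $\cfcc$ (every point of a component carries its full fcc neighborhood inside the component, so the component both contains and is contained in such a copy). To conclude you still must exclude a second component, and here hypothesis 1 does the work: the covering radius of $\cfcc$ at nearest-neighbor distance $1$ is $1/\sqrt2<1$, so any point of $\c'$ outside the first component would lie within distance $1/\sqrt2$ of that complete fcc copy, contradicting the minimum-distance assumption. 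The paper's proof is equally silent on this point, but as written your justification for connectivity is wrong and should be replaced by (or supplemented with) this argument.
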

\begin{proof}
Properties~1-3 are
sufficient to ensure that, for every $z\in\c',$ the set $(\sx+z)\cap\c'$ is
either a rotated and translated cuboctahedron, or a twisted cuboctahedron;
this is a consequence of Theorem~\ref{thm:HTT}.
Property 4 then selects the cuboctahedron.

By induction one can see that each cuboctahedron is a translated copy of a single
rotated cuboctahedron, i.e. there exists a translation $t \in \R^3$ and a rotation
$R \in SO(3)$ such that
$$ (\sx+z) \cap \c' = R \cub +t \text{ for all } z \in \c',$$
which concludes the proof.
\end{proof}
\begin{defn}[Contact graphs]\label{defn:contact}
 For any discrete set $Z\subset \sx,$ the associated contact graph $CG(Z)$ is defined
 to be the graph with vertices at points in $Z$ and edges $\{z_1,z_2\}$ such that
 $z_1,z_2\in Z$ and $\left|z_1-z_2\right|=1.$
\end{defn}

\begin{figure}[H]\label{fig:fcchcpgraphs}
\begin{center}
\includegraphics[trim=0cm 5cm 0cm 5cm, clip, width=3.5cm]{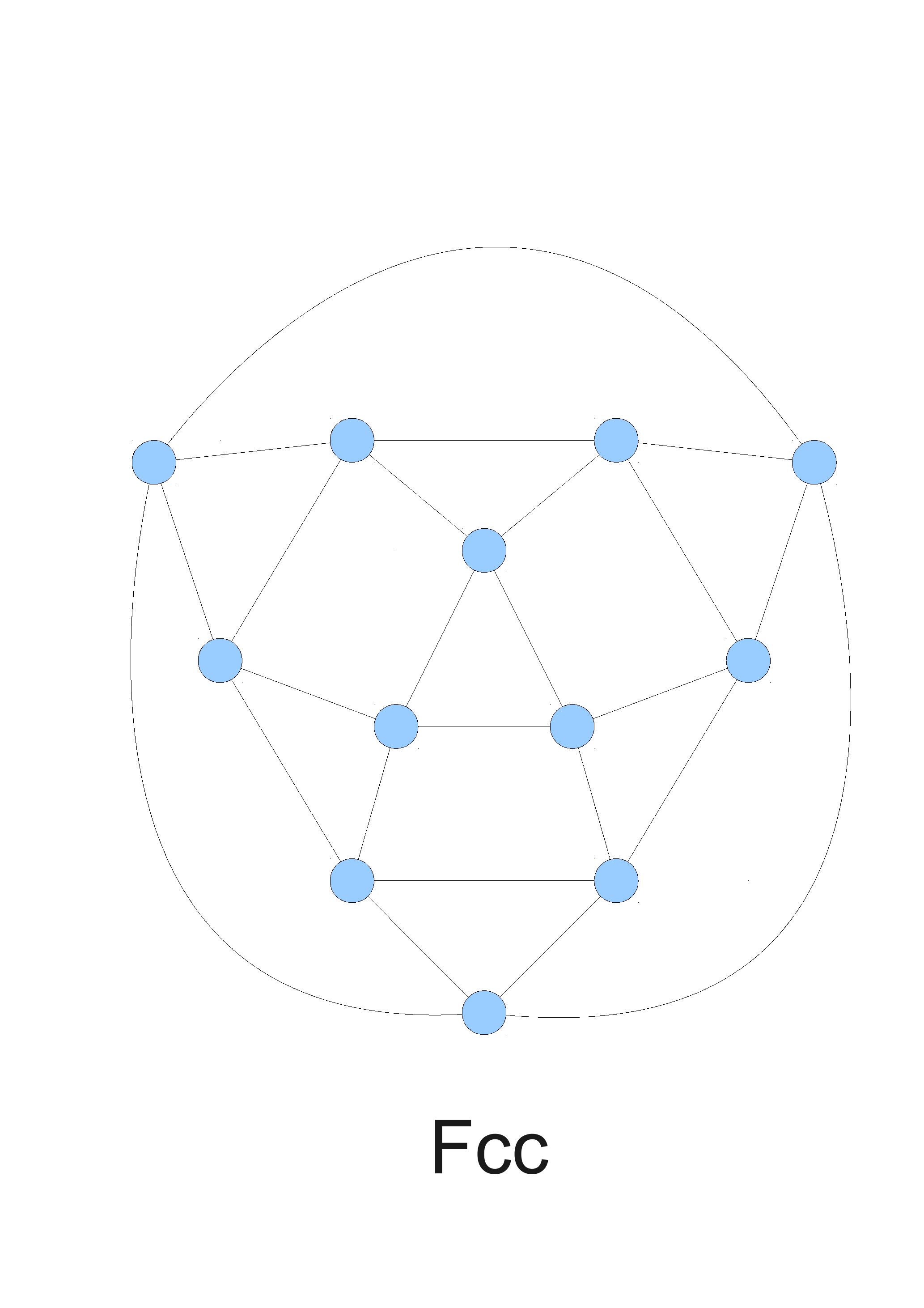} \ \ \ \ \ \ \ \ \ \ \ \ \ \ \includegraphics[trim=0cm 5cm 0cm 5cm, clip, width=3.51cm]{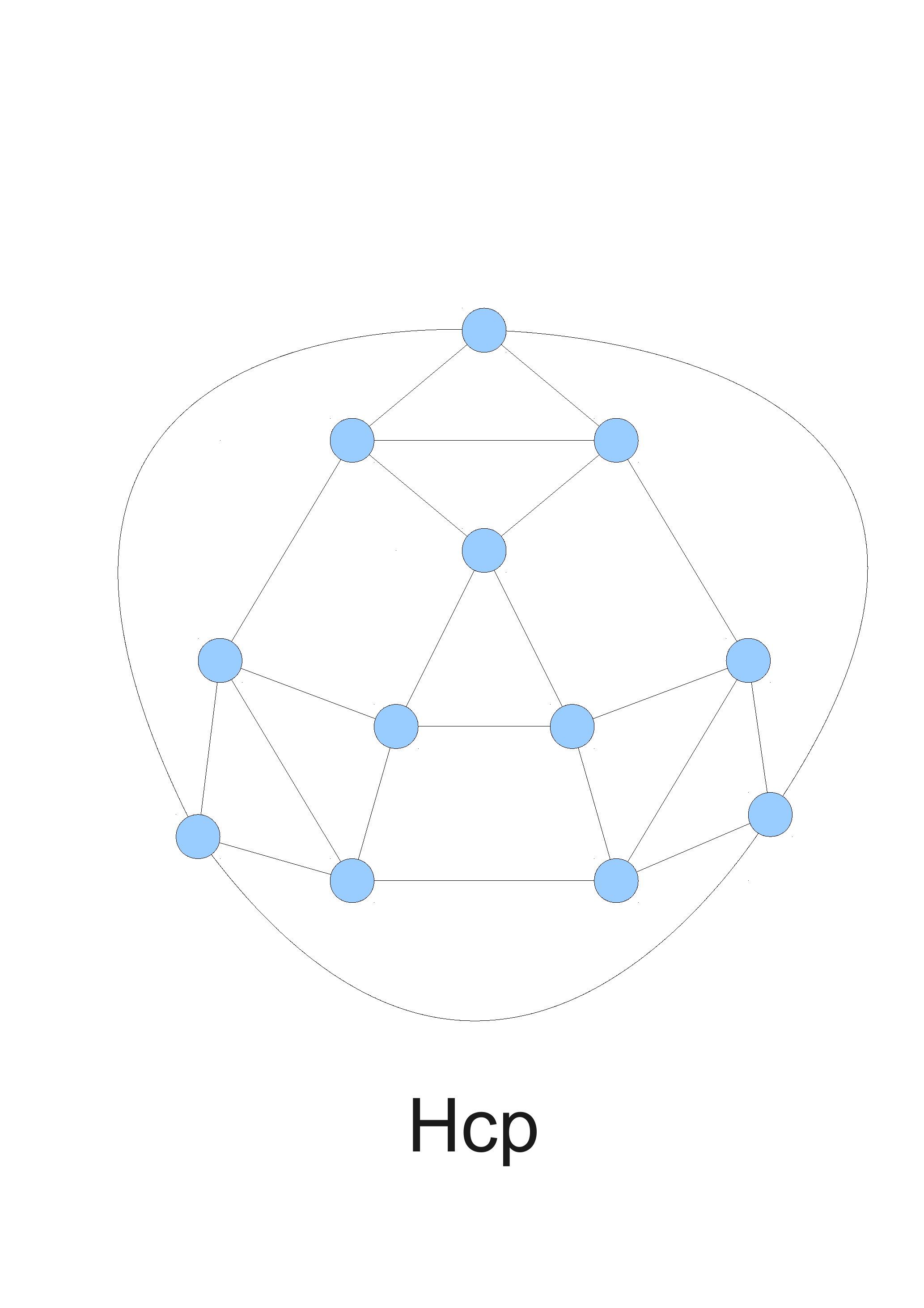}
\caption{Contact graphs of $\cub$ and $\tcub$ respectively}
\end{center}
\end{figure}

\subsubsection{Nearest neighborhood geometry of the ground state}
\label{sec:localground}
We denote by
\begin{align} \label{pairtrip}
P:=\left\{(x,x')\in X\times X :x\neq x'\right\}
\end{align}
the set of ordered pairs. We will use the convention that
if $p\in P,$ then $p=(p_1,p_2).$  For clarity, we will generally write $p\in P$ to denote a long-range pair, and $q\in \nnb$ to denote an edge (c.f. (\ref{eq:sdefn})).  For each
$x\in X,$ we define the \textit{nearest neighborhood} of $x$ by
\begin{eqnarray}\label{eq:firstnhd}
N(x):=\left\{x'\in X:(x,x')\in \nnb\right\}
\end{eqnarray}
and define $x$ and $x'$ as nearest neighbors if $x'\in N(x).$

For each $x\in X,$ we define
\begin{align*}
\angset(x):=\left\{q\in \nnb:q \subset N(x)\right\},
\end{align*}
to be the set of edges contained within the nearest neighborhood of $x$ and
\begin{align} \label{deftrip}
T:=\{(x,x',x'') \in X\times X \times X \; : \; (x', x'') \in \angset(x)\}
\end{align}
the set of  neighboring triples.

The following proposition provides upper bounds on $\# N(x)$ and
$\#\angset(x).$  If both upper bounds are attained, then statement 2 says that
$y(N(x))$ approximates a rotated and translated subset of the fcc or hcp crystal lattice.
This motivates the definition of a set of \textit{regular points} in $X$ (c.f.
Definition~\ref{defn:regular}). Note that the concept of regular points does not
discriminate between $\cfcc$ and $\chcp$.

\begin{prop}[Local neighborhoods]\label{prop:maxnhd}
There exists a constant $\alpha_0>0$  such that for all $\alpha\in (0,\alpha_0)$ and
and all configurations $y:X\rightarrow\re^3$ satisfying the minimum distance bound (\ref{eq:mindist})
the following statements are true.

\begin{enumerate}
\item $\# N(x)\leq 12$ and $\half\#\angset(x)\leq 24$ for all $x\in X.$
\item If $\half\#\angset(x)=24,$ then $\# N(x) = 12$ and there exists
$Q\in\{\cub,\tcub\},$ a map $\Phi:Q \cup\{0\}\to N(x)\cup\{x\}$ \lf{and} a monotone function $\veps:\re\rightarrow\re$ such that $\lim_{\alpha\rightarrow 0}\veps(\alpha)=0$ and
\begin{align} \label{quasirigid}
& (\Phi(\eta),\Phi(\eta')) \in \nnb \text{ if and only if } \eta,\eta' \in Q\cup\{0\} \text{ and } |\eta-\eta'| =1,\\
\label{eq:qdist}
& \min_{R \in SO(3)}\amend{9}{\max_{\eta \in Q \cup \{0\}}}|R\eta +y(x)-y\circ \Phi(\eta)|\leq\veps(\alpha).
\end{align}
\end{enumerate}
\end{prop}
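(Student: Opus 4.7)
The plan is to transfer the problem from $\R^3$ to the unit sphere \sx{} via the radial projection. For each $x \in X$ with $\#N(x) \geq 1$, define the unit vectors
\begin{align*}
u(x') := \frac{y(x') - y(x)}{|y(x')-y(x)|} \in \sx, \qquad x' \in N(x),
\end{align*}
and set $U(x) := \{u(x') : x' \in N(x)\}$. The minimum-distance bound from Proposition~\ref{prop:mindist} together with the definition of \nnb{} and the law of cosines shows that for distinct $x', x'' \in N(x)$ one has $|u(x') - u(x'')| \geq 1 - c\alpha$ for some absolute constant $c$, so $U(x)$ is a packing of unit vectors on \sx{} with angular separation at least $\pi/3 - O(\alpha)$. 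Moreover, the pair $\{x',x''\}$ lies in $A(x)$ iff $|u(x')-u(x'')| \leq 1 + c\alpha$, i.e.\ iff $u(x')$ and $u(x'')$ are near-contacts on \sx.

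\emph{Part 1.} The bound $\#N(x) \leq 12$ is a perturbative form of the classical kissing number in three dimensions due to Sch\"utte-van der Waerden \cite{SchVdWae}. Because the exact kissing number equals $12$ and the admissible configurations form a compact set, a contradiction/compactness argument gives: for $\alpha$ small enough, no $13$ unit vectors can have pairwise separation $\geq \pi/3 - O(\alpha)$. The bound $\tfrac{1}{2}\#A(x) \leq 24$ then follows from the same discrete-geometric fact applied to the contact graph $CG(U(x))$: on $\sx$, the maximum number of contact pairs among $12$ vectors with pairwise separation $\geq \pi/3$ is $24$, attained only by the cuboctahedron and the twisted cuboctahedron (this is the content of Theorem~\ref{thm:HTT}). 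Again by compactness the same cap of $24$ persists for configurations with the relaxed separation $\pi/3 - O(\alpha)$.

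\emph{Part 2.} Suppose $\tfrac{1}{2}\#A(x) = 24$. Then also $\#N(x) = 12$, since any configuration with fewer than $12$ vectors has strictly fewer contact pairs (even asymptotically, by the same compactness argument). The contact graph $CG(U(x))$ therefore has $12$ vertices and $24$ edges, and by Theorem~\ref{thm:HTT} it is isomorphic to the contact graph of either \cub{} or \tcub. A compactness/stability argument now produces a rotation $R \in SO(3)$ so that the vectors $u(x')$ lie within $\veps(\alpha)$ of the corresponding vertices of $R\,Q$, with $\veps(\alpha)\to 0$ as $\alpha \to 0$: if no such rotation existed for a sequence $\alpha_n \to 0$, one could extract a subsequential limit which would be an exact kissing configuration with $24$ contact pairs, yet lying outside every $SO(3)$-neighborhood of $Q_{\mathrm{co}}$ and $Q_{\mathrm{tco}}$, contradicting Theorem~\ref{thm:HTT}. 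Define $\Phi(0) := x$ and let $\Phi$ send each vertex $\eta$ of $Q$ to the unique $x' \in N(x)$ whose spherical image $u(x')$ is closest to $R\eta$; this is well defined for $\alpha$ sufficiently small because the spherical distances between distinct vertices of $Q$ exceed $2\veps(\alpha)$. Then (\ref{quasirigid}) follows from the graph-isomorphism and (\ref{eq:qdist}) from the spherical closeness combined with $|y(x')-y(x)| \in [1-\alpha,1+\alpha]$.

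\emph{Main obstacle.} The substantive step is the classification of $12$-point spherical configurations with $24$ contact pairs, encoded in Theorem~\ref{thm:HTT}; this is the genuinely three-dimensional input replacing the simple orbit structure available in two dimensions. Once Theorem~\ref{thm:HTT} is in hand, both the upper bounds in Part~1 and the rigidity in Part~2 are obtained by routine compactness perturbations around the exact extremizers, and the only care needed is to make the modulus $\veps(\alpha)$ explicit enough to feed into later sections.
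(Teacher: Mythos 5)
Your proposal takes essentially the same route as the paper. The paper's proof of Proposition~\ref{prop:maxnhd} is a single sentence declaring the result an immediate consequence of Theorems~\ref{thm:kissing} and~\ref{thm:HTT} together with ``standard compactness arguments,'' and your write-up spells out precisely those ingredients: the radial projection of $N(x)$ onto $\sx$, the law-of-cosines reduction to a spherical packing with separation $\pi/3-O(\alpha)$, the perturbed kissing bound for part~1, and the compactness/stability argument around the extremizers of Theorem~\ref{thm:HTT} for part~2.

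One point worth tightening if you were to turn this into a full proof: the biconditional in~(\ref{quasirigid}) does not follow from~(\ref{eq:qdist}) by direct comparison of distances, since the modulus $\veps(\alpha)$ need not be $O(\alpha)$; the forward direction (edges of $A(x)$ map to edges of $Q$) comes from $1+\alpha+2\veps(\alpha)<\sqrt{2}$, but the converse is not a distance estimate. It is instead a counting argument: $\Phi$ is injective for small $\alpha$, so the $24$ unordered pairs in $A(x)$ land on $24$ distinct edges of $Q$; since $Q$ has exactly $24$ edges, the map is a bijection and the converse follows. Your phrase ``follows from the graph-isomorphism'' presumably encodes this, but it should be made explicit.
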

Note that \eqref{eq:qdist} implies \amend{13}{$\Phi(0) =x$}.
The proof of Proposition \ref{prop:maxnhd} depends on two key results: the
three-dimensional kissing problem (Theorem~\ref{thm:kissing}) and the maximum
number of tangencies in a kissing configuration of unit spheres (Theorem~\ref{thm:HTT}).

\begin{thm}[The kissing problem]\label{thm:kissing}
For any $d\in\{1,2\ldots \}$ let the kissing number $k(d)$ be the maximum number of
non-overlapping unit spheres in $\re^d$ that can simultaneously touch a central
unit sphere.  Then $k(3)=12.$
 \end{thm}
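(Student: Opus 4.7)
The plan is to translate the kissing problem into a spherical code problem and then treat the two bounds separately. If $n$ non-overlapping unit spheres centered at $c_1,\ldots,c_n \in \re^3$ all touch the central unit sphere at the origin, then $|c_i| = 2$ and $|c_i - c_j|\geq 2$ for $i\neq j$; writing $p_i = c_i/2 \in \sx$ this translates to $p_i \cdot p_j \leq \tfrac{1}{2}$, equivalently the angle subtended at the origin is at least $\pi/3$. Thus $k(3)$ equals the maximum cardinality of a spherical code on $\sx$ with minimum angular separation $\pi/3$.

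For the lower bound, I would take $\{p_1,\ldots,p_{12}\}$ to be the twelve vertices of the cuboctahedron $\cub = \cfcc \cap \sx$ already introduced in the paper. A direct computation shows that any two nearest neighbors of $0$ in $\cfcc$ lie at Euclidean distance exactly $1$, so for any two distinct points of $\cub$ the pairwise distance is at least $1$ and the corresponding inner product is at most $\tfrac{1}{2}$, with equality on nearest-neighbor pairs. Hence $k(3) \geq 12$.

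For the upper bound one must rule out $n = 13$. The elementary spherical-cap argument, placing disjoint caps of angular radius $\pi/6$ around each $p_i$ and comparing with the total area $4\pi$, gives only $n \leq 14$ since each cap has area $2\pi(1 - \tfrac{\sqrt{3}}{2})$. Two standard routes sharpen this to $12$. The first is the Sch\"utte--van der Waerden geometric argument: assuming thirteen such points exist, form the spherical Voronoi tessellation of $\sx$ induced by $\{p_i\}$ and use spherical trigonometry together with an explicit lower bound on the area of each cell to contradict $\sum_i \mathrm{area}(V_i) = 4\pi$. The second is Musin's linear-programming bound: exhibit a polynomial $f:[-1,1]\to\re$ whose Gegenbauer expansion coefficients $f_k$ (for $\sx$) are all non-negative and which satisfies $f(t) \leq 0$ on $[-1,\tfrac{1}{2}]$; then by evaluating $\sum_{i,j} f(p_i \cdot p_j)$, using positivity of the outer Gegenbauer sum on the code together with the pointwise inequality, one obtains $n \cdot f(1) \geq \sum_{i,j} f(p_i \cdot p_j) \geq n^2 f_0$, hence $n \leq f(1)/f_0$.

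The hard part is squarely the upper bound. The classical Delsarte--Goethals--Seidel choice of polynomial yields only $n \leq 13$, so one must go further, either via the extensive case analysis of Sch\"utte--van der Waerden or via Musin's refinement, which combines a high-degree polynomial with an averaging argument over $SO(3)$ to eliminate the borderline case $n=13$. I would follow Musin's route, as the machinery is more systematic and transparent than the geometric dissection; the essential technical step is the construction of the polynomial $f$ together with the verification that its Gegenbauer coefficients are non-negative and that the constraint $f(t)\leq 0$ holds throughout $[-1,\tfrac{1}{2}]$, and it is this explicit certificate that carries the whole weight of the proof.
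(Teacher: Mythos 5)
The paper does not prove this theorem at all: Theorem~\ref{thm:kissing} is quoted as a classical result, with the proof attributed to Sch\"utte--van der Waerden \cite{SchVdWae} and Leech \cite{Lee} (Musin \cite{Mus} is also cited in the introduction). So there is no internal argument to compare against, and your proposal should be judged as a free-standing attempt. Within that framing, your reduction is correct: rescaling the centers to $\sx$ turns $k(3)$ into the maximal size of a spherical code with minimal angle $\pi/3$, and the lower bound via the twelve points of $\cub=\cfcc\cap\sx$ is complete, since any two distinct points of $\cub$ are distinct points of $\cfcc$ and hence at distance at least $1$, i.e.\ inner product at most $\tfrac12$. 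Your accounting of the upper bound is also accurate as a survey: the cap-packing argument only gives $14$, the classical Delsarte--Goethals--Seidel polynomial only gives $13$, and closing the gap to $12$ requires either the Sch\"utte--van der Waerden/Leech geometric analysis or Musin's extended linear-programming method.

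However, as an actual proof your proposal stops exactly where the theorem becomes hard. The step ``exhibit a polynomial $f$ with nonnegative Gegenbauer coefficients, $f\leq 0$ on $[-1,\tfrac12]$ (or on the relevant subinterval in Musin's relaxed setting), and carry out the averaging/local-optimization argument that eliminates $n=13$'' is precisely the content of the theorem; naming the certificate is not the same as producing and verifying it, and your description of Musin's refinement as an average over $SO(3)$ is in any case only loosely correct (his argument bounds the contribution of points near $-1$ by an optimization over configurations in a spherical cap). So either you should state the upper bound as an appeal to \cite{SchVdWae}, \cite{Lee} or \cite{Mus} --- which is exactly what the paper does, and is perfectly acceptable here --- or you must supply the explicit polynomial and the accompanying estimates, which your write-up does not. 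As it stands, the proposal is a correct strategy outline with the decisive step delegated to the literature rather than proved.
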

The first proof that $k(3)=12$ was given by Sch\"utte and Van der Waerden in 1953
\cite{SchVdWae}, followed by an independent proof by Leech in 1956 \cite{Lee}.

\begin{thm}\label{thm:HTT}
Let $Z\subset \sx$ be a discrete set of vertices such that $\left|z'-z\right|\geq 1$
for all $\{z,z'\}\subset Z.$  Then the maximal number \lf{of} undirected
of edges in the contact graph $CG(Z)$ (cf Def. \ref{defn:contact}) is 24.
Equality is attained only when the points of $Z$ are placed at the vertices of a
cuboctahedron or a twisted cuboctahedron, with edges of unit length.
\end{thm}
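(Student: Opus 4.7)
By Theorem~\ref{thm:kissing} we have $\#Z\leq 12$. Since $Z\subset\sx$, the Euclidean constraint $|z-z'|\geq 1$ is equivalent to the spherical constraint $d_S(z,z')\geq\pi/3$, with equality characterising the edges of $CG(Z)$. For fixed $z\in Z$, the neighbours of $z$ in $CG(Z)$ lie on the planar circle $\sx\cap\{z':|z'-z|=1\}$, a circle of Euclidean radius $\sqrt{3}/2$ contained in $\{\langle x,z\rangle=1/2\}$. Two neighbours at central angle $\theta$ on this circle have Euclidean distance $\sqrt{3}\sin(\theta/2)$, so $|z_1-z_2|\geq 1$ forces consecutive angular gaps of at least $\arccos(1/3)$; summing to $2\pi$ yields the local degree bound $\deg_{CG(Z)}(z)\leq 5$.

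To sharpen the crude count $|E|\leq\tfrac{1}{2}\cdot 12\cdot 5=30$ to the claimed $|E|\leq 24$, I would realise $CG(Z)$ as a drawing on $\sx$ with edges the geodesic arcs of spherical length $\pi/3$ between contact pairs, and first prove that this drawing is planar. If two edges crossed at a point $p\in\sx$, the triangle inequality along each arc would place all four endpoints within spherical distance $\pi/3$ of $p$; four points satisfying pairwise spherical distance $\geq\pi/3$ and lying in such a cap are very constrained, and a case analysis rules out the existence of two vertex-disjoint pairs whose great-circle arcs meet in the interior of the cap. Once planarity is in hand, Euler's formula reads $F=|E|-\#Z+2$, and every face is an equilateral spherical polygon with sides of spherical length $\pi/3$. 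A lower bound on face area -- at least $3\arccos(1/3)-\pi$ for each triangular face, with strictly larger bounds for $n\geq 4$-gons -- combined with $\sum_f\mathrm{area}(f)=4\pi$, yields after a careful accounting the sharp bound $|E|\leq 24$.

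In the equality case $|E|=24$ all of the preceding inequalities are simultaneously saturated, which forces $\#Z=12$, every vertex to have degree exactly four, and every face to be either an equilateral spherical triangle of side $\pi/3$ or an equilateral spherical rhombus of side $\pi/3$ with angles $\arccos(-1/3)$; the face pattern is therefore combinatorially that of the cuboctahedron, with $8$ triangles and $6$ rhombi. Up to the rigidity at each vertex, only two global realisations are consistent with these local data: the triangular caps above and below an equatorial hexagonal ring are either aligned or twisted by $\pi/3$, producing $\cub$ and $\tcub$ respectively. The main obstacle is the passage from the crude planar bound $|E|\leq 3\#Z-6=30$ to the sharp value $24$: this rests on the no-crossing lemma, whose extremal cap analysis must handle the genuine configurations (such as three boundary points plus the pole) where four points do fit in a cap of radius $\pi/3$, and on the refined lower bound on the area of equilateral spherical polygons with sides $\pi/3$, which requires treating triangular and larger faces separately.
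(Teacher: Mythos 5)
First, note that the paper does not prove this theorem at all: its ``proof'' is the citation \cite{HarTayThe}, where the result (maximizing tangencies among twelve spherical caps) is established by a substantially more involved argument. So your proposal is effectively an attempt to reprove that external result, and the real question is whether your sketch closes. The uncontroversial parts are fine: $\#Z\le 12$, the degree bound $\deg\le 5$ via the gap $\arccos(1/3)$, and the no-crossing lemma (which in fact follows cleanly from the triangle inequality: if two contact arcs crossed at an interior point $p$, then $d(a,c)+d(b,d)\le 2\pi/3$ forces equalities that put all four points on one great circle, which is incompatible with pairwise distances $\ge\pi/3$). The gap is in the step you label ``careful accounting''. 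Per-face area lower bounds combined with Euler's formula cannot give $|E|\le 24$: take the combinatorial pattern $V=12$, $E=27$, $F=17$ consisting of $16$ triangular faces and one hexagonal face. It satisfies Euler's relation, $\sum_f n_f=2E=54$, the degree bound ($2E=54\le 5\cdot 12$), and the area constraint, since $16$ equilateral triangles of side $\pi/3$ contribute only $16\bigl(3\arccos(1/3)-\pi\bigr)\approx 8.8<4\pi$ and the remaining $\approx 3.75$ can in principle be carried by a single $6$-gon face (faces need not be convex, need not be simple if the graph is not $2$-connected, and the ``outer''-type face can be arbitrarily large, so no lower bound $L(6)$ of the required size is available). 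Thus the inequality direction in your plan is unfavorable: many edges means many small triangular faces, which makes the area constraint easier, not harder, to satisfy. Also your ``strictly larger bound for $n\ge4$-gons'' is weaker than you suggest: an equilateral quadrilateral with side $\pi/3$ can be squashed toward two glued triangles, so its area infimum is only $2\bigl(3\arccos(1/3)-\pi\bigr)$, well below the square's value.

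What actually kills configurations like the one above is vertex-angle rigidity, which is absent from your sketch: at any vertex the consecutive angles are each $\ge\arccos(1/3)$ and sum to exactly $2\pi$, and since $2\pi$ is not an integer multiple of $\arccos(1/3)$, no vertex can be surrounded exclusively by equilateral triangles; in the $E=27$ pattern at least six vertices would have to be. Turning this observation into the sharp bound $24$, and into the rigidity statement that equality forces exactly the cuboctahedron or the twisted cuboctahedron (eight triangles and six genuine squares, not squashed rhombi), requires a delicate case analysis of the admissible angle patterns at degree-$4$ and degree-$5$ vertices together with the global combinatorics -- this is precisely the content of \cite{HarTayThe}, and it is the part your proposal leaves as a black box. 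As written, the plan would not compile into a proof even with effort spent on the steps you do describe, because the decisive ingredient is not the one you propose.
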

\begin{proof}
See \cite{HarTayThe}
\end{proof}

\begin{proof}[Proof of Proposition \ref{prop:maxnhd}]
The proof of statement\lf{s} 1 and 2 \lf{is} an immediate consequence of
Theorems~\ref{thm:kissing} \& \ref{thm:HTT} and standard compactness arguments.

\end{proof}

The dichotomy result in Proposition~\ref{prop:maxnhd} allows us
to partition the label set $X$.

\begin{defn}\label{defn:regular}
The subsets $X_{12}, \Xcub, \Xtcub, \partial X$ of $X$ are defined as
\begin{align*}
X_{12} & = \{ x \in X \; : \; \#N(x) = 12\},\\
\Xreg & = \left\{ x\in X_{12} \; : \; \tfrac{1}{2}\# A(x)
= 24 \right\},\\
\Xcub &= \left\{ x \in X_\mathrm{reg} \; : \; (\ref{eq:qdist})
\text{ holds with } Q=\cub \right\},\\
\Xtcub &= \left\{ x \in X_\mathrm{reg} \; : \;
(\ref{eq:qdist}) \text{ holds with } Q=\tcub\right\},\\
\partial X &= X \setminus \Xcub.
\end{align*}
\end{defn}
Clearly $X \supset X_{12} \supset \Xreg$. Proposition~\ref{prop:maxnhd}.2 implies that $\Xcub$ and $\Xtcub$ form a partition of $\Xreg$, i.e.
\begin{align} \label{labdecomp}
\Xreg = \Xcub \cup \Xtcub \text{ and }
\Xcub \cap \Xtcub=\emptyset
\end{align}
if $\alpha \leq \alpha_0$.

If $x \in X_\mathrm{reg}$, then \amend{10}{Proposition \ref{prop:maxnhd}.2} allows us to identify subsets of $N(x)$ which form triangles and squares. As an application of this construction we can characterize the set of regular points with a complete set of second neighbors.
\begin{defn}
The regular points with complete second neighborhood are defined by
\begin{align*}
\Xreg^2 &=\left\{ x \in \Xreg \; : \; N(x) \subset \Xreg\right\}.
\end{align*}
The second neighborhood of a label $x \in \Xreg^2$ is defined by
\begin{align*}
N^2(x) &= \bigcup_{\atop{\{x_1,\ldots,x_4\} \subset N(x)}{\{x_1,\ldots,x_4\} \text{ is  a square}}}\left( \bigcap_{i=1}^4 N(x_i)\right)
\setminus\{x\},
\end{align*}
\ft{with the convention that a set $\{x_1,\ldots,x_4\}$ is called a square if it correponds to one of the six
squares in the
contact graph of $\tcub$ and $\cub$, cf. fig.~\ref{fig:fcchcpgraphs}.}
\end{defn}

\subsection{Simplicial decomposition of $\c$}
If $\c= \cfcc$ or $\c=\chcp$ it is possible to cover $\R^3$ by tetrahedra and octahedra such that the corners coincide with $\c$ and almost every point in $\R^3$ is covered exactly once. \amend{11}{To see this we recall that
$\c$ can be written as unions of layers of triangular lattices and observe that it suffices to decompose the
space between two consecutive layers such that the surface is given by two parallel planes. An illustration of the
(actually unique) decomposition is given in fig.~\ref{units-illustration}.}
\begin{figure}\begin{center}
 \includegraphics[scale=0.7]{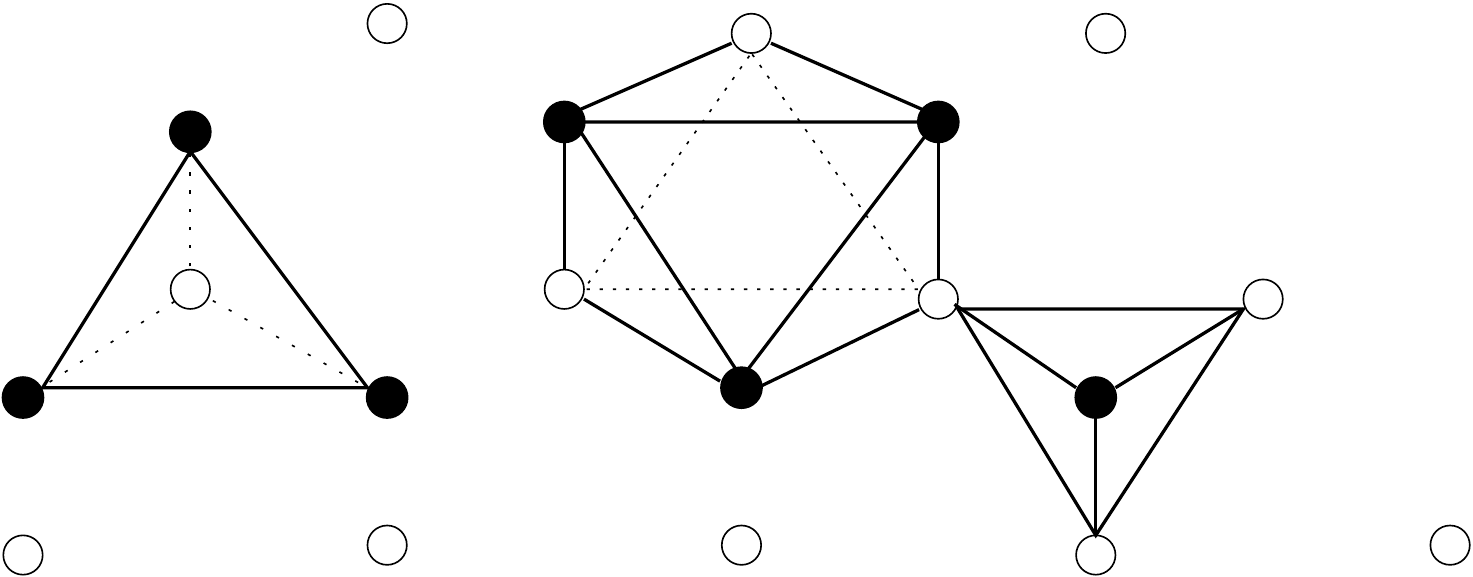}
\end{center}
\label{units-illustration}
\caption{Two consecutive layers of a close packed lattice. Solid bullets are in the upper layer, empty bullets in the
lower layer. The tetrahedra and octahedra are defined by the rule: Upper triangles with sidelength 1 that do not contain a solid bullet bound octahedra, the remaining upper triangles with sidelength 1 bound tetrahedra and conversely for lower triangles}
\end{figure}

We introduce two families of sets:
\textit{units} $\mathcal U$ and \textit{simplices} $\d$. Units are either tetrahedra or octahedra.
Each octahedron can be decomposed into 8 simplices, the tetrahedra are retained without modification. The simplices provide an intuitive notion of piecewise affine interpolation.
\begin{defn}[Units and simplices]\label{defn:simplices}
Let $\c$ be either $\cfcc$ or $\chcp$. The units are given by
$$ \mathcal U = \{\tau \subset \c \; : \; (\# \tau = 6 \text{ and } \diam(\tau)
= \sqrt{2}) \text{ or } (\#\tau = 4 \text{ and } \diam(\tau) = 1)\},$$
with $\diam(\tau) = \max\left\{|\eta-\eta'| \; : \; \eta,\eta' \in \tau\right\}$.
The centers of the octahedra are defined by
$$\c_* = \left\{ \frac{1}{6}\sum_{\eta \in \tau} \eta \; : \;
\tau \in \mathcal U \text{ and } \#\tau = 6\right\}.$$
The simplices are given by
$$ \mathcal D = \left\{\sigma \subset \c \cup \c_* \;:\; \# \sigma = 4 \text{ and }
\diam(\sigma)= 1\right\}.
$$
\end{defn}
It is easy to see that each unit is either a tetrahedron or an octahedron with sidelength 1. Moreover, both $\mathcal U$ and $\mathcal D$ form a
disjoint covering of $\R^3$, i.e.
\begin{align*}
\bigcup\limits_{\tau \in \mathcal U} \conv(\tau) = \bigcup_{\sigma \in \mathcal D} \conv(\sigma) = \R^3,
\end{align*}
and
\begin{align*}
\meas(\conv(\tau)\cap \conv(\tau'))=\meas(\conv(\sigma) \cap \conv(\sigma')) = 0,
\end{align*}
for all $\tau, \tau' \in \mathcal U$, $\sigma, \sigma' \in \mathcal D$
such that $\tau \neq \tau'$ and $\sigma \neq \sigma'$. The simplicial
decomposition of $\R^3$ is finer than the decomposition into units, i.e.
for each unit $\tau$ there are simplices $\sigma_1\ldots \sigma_I$ such that
\begin{align} \label{eq:unitdecomp}
 \conv(\tau) = \bigcup_{i=1}^I \conv(\sigma_i),
\end{align}
where $I=1$ if $\tau$ is a tetrahedron and $I=8$ if $\tau$ is an octahedron.

Recall that the contact graphs of $\cub$ and $\tcub$ contain 6 rigid squares.
\begin{remark} \label{rem:oct}
Note that for $\c = \cfcc$ a scaled octahedron $\Omega = s\, \conv(\oct)$, $s \in \{1,2,\ldots\}$ admits a decomposition into units in the sense that
$$ \Omega=\bigcup_{\tau \in \mathcal A}\conv(\tau) \text{ for some
collection of units } \mathcal A \subset \mathcal U. $$
 To see this it suffices to note that the boundary of $\Omega$ is a subset of the union of 8 triangular lattice planes which do not cut units.
\end{remark}
Now we are in a position to introduce interpolations and reference configurations.

\begin{defn}[$\Phi-$interpolation maps]\label{defn:admap}
Let $\c$ be either $\cfcc$ or $\chcp$ and $\Omega \subset \R^3$ be a simply connected set such that
\begin{align} \label{decomp}
&\amend{12}{\Omega=\cup_{\tau \in \mathcal A}\conv(\tau)} \text{ for some
collection of units } \mathcal A \subset \mathcal U. 
\end{align}
The map $u \in W^{1,\infty}(\Omega)$ is an interpolation of
$\Phi: \c \cap \Omega \to X$ if $u|_{\conv(\sigma)}$ is affine for each simplex \ft{$\sigma \in \mathcal D$ such
that $\sigma \subset \Omega$} and
$$ u(\eta) = \left\{\begin{array}{rl}
y\circ\Phi(\eta)  & \text{ if } \eta \in \c \cap \Omega,\\[0.5em]
\frac{1}{6} \sum\limits_{\atop{\eta' \in \c}{|\eta'-\eta|=\frac{1}{\ft{2}\sqrt{2}}}}u(\eta') & \text{ if } \eta \in
\c_* \cap \Omega.\end{array}\right. $$
\end{defn}

\begin{defn}[Reference configuration]\label{defn:discretelocal}
  Let $\c$ be either $\cfcc$ or $\chcp$ and $y:X \to \R^3$ be a configuration map satisfying the minimum distance
  bound (\ref{eq:mindist}) and $A \subset X$. A triple $(\Omega, \Phi,u)$ is
a reference configuration covering $A$ if $\Omega\subset\R^3$ is
simply connected, eqn.~(\ref{decomp}) holds, the map
$u \in W^{1,\infty}(\Omega,\R^3)$ is an interpolation of
$\Phi: \Omega \cap \c\to X$
in the sense of Definition~\ref{defn:admap} and
\begin{enumerate}
\item The map $\Phi$ covers $A$, i.e.
$A \subset \Phi(\Omega \cap \c)$.
\item $(\Phi(\eta),\Phi(\eta')) \in \nnb$ if and only if $|\eta -\eta'|=1$, $\eta,\eta' \in \Omega \cap \c$.
\item The map $u$ satisfies the bound
\begin{align}\label{eq:orpreserving}
\|\dist(\nabla u, SO(3))\|_{L^\infty(\Omega)} \leq \frac{1}{2}.
\end{align}
\end{enumerate}
\end{defn}
Inequality~(\ref{eq:orpreserving}) guarantees that the local
orientation is preserved by $u$.
\subsection{Existence of reference configurations} \label{sec:refconex}
We state existence results for reference configurations which cover
defect-free subsets of $X$. The construction is based on the existence of local imbeddings $\Phi$ which
map $\cub$ bijectively to neighborhoods of labels $x \in X\setminus \partial X$. These imbeddings can be chosen in a compatible way in the sense that they coincide locally after rotation and
translation.

\begin{prop}[Compatibility of local imbeddings]
\label{prop:discretelocal}
Let $x, x' \in X_\mathrm{reg}$, $Q, Q' \in \{\cub, \tcub\}$, $R, R' \in SO(3)$, $\Phi:Q\cup\{0\} \to X$, $\Phi':Q'\cup\{0\} \to X,$ \lf{$\varepsilon:\re\to\re$} be the associated domains, rotations and maps from Proposition~\ref{prop:maxnhd}. If $x' \in N(x)\cup\{x\}$ \lf{and if $\alpha>0$ is sufficiently small}, then
there exists a rotation $T \in SO(3)$ such that the set
$A=(Q \cup \{0\}) \cap (T\,(Q' \cup \{0\})+\Phi^{-1}(x'))$
has at least 6 elements and
$\Phi'$ is compatible with $\Phi$ in the sense that
\begin{align}\label{eq:nnunique}
\Phi'(T^{-1}(\eta-\xi)) =\Phi(\eta) \text{ for all } \eta \in A,
\end{align}
and
\begin{align}
 \label{eq:rotcomp} |T-R^{-1}R'|\leq 6 \,\veps\lf{(\alpha)},
\end{align}
where $\xi = \Phi^{-1}(x')-\Phi^{-1}(x)$ and $|\ft{F}|= \max_{|v|\leq 1} |\ft{F}v|$ denotes the operator norm.
\end{prop}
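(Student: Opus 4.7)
The plan is to propose $\tilde T := R^{-1}R' \in SO(3)$ as the natural candidate and snap it to an exact rotation $T$ compatible with the discrete combinatorial structure at the overlap. Set $\xi := \Phi^{-1}(x') \in Q \cup \{0\}$ and $\xi' := (\Phi')^{-1}(x) \in Q' \cup \{0\}$, so $|\xi|, |\xi'| \in \{0,1\}$. Proposition~\ref{prop:maxnhd}.2 yields representations $y(\Phi(\eta)) = y(x) + R\eta + e(\eta)$ with $|e(\eta)| \leq \veps(\alpha)$ and the analogue for $\Phi'$. Setting $\eta = \xi$ gives $y(x') - y(x) = R\xi + e(\xi)$, and combining with the symmetric identity for $\xi'$ yields $\tilde T \xi' = -\xi + O(\veps(\alpha))$.

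Next I identify the overlap combinatorially. In both \cub\ and \tcub, the 1-neighborhood of any vertex $v$ inside $Q \cup \{0\}$ has exactly 6 elements: $0$, $v$ itself, and the 4 neighbors of $v$ in $Q$; these 4 neighbors lie in the plane $\{w : w \cdot v = 1/2\}$ on a circle of radius $\sqrt{3}/2$ centered at $v/2$. Let $S \subset Q$ (resp. $S' \subset Q'$) denote the 4-neighbor set around $\xi$ (resp. $\xi'$). Applying Proposition~\ref{prop:maxnhd}.2 to both $\Phi$ and $\Phi'$, the positions of the 4 common neighbors of $x$ and $x'$ in $X$ can be read off in two ways, both up to $O(\veps(\alpha))$; for $\alpha$ small these must agree as particles, furnishing a bijection $\sigma$ with $\Phi(n_{\sigma(j)}) = \Phi'(n'_j)$ and simultaneously constraining the angular distributions of $S$ and $S'$ on their respective circles to be rigidly congruent. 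I then construct $T$ as the unique rotation with $T\xi' = -\xi$ and $T n'_j = n_{\sigma(j)} - \xi$ exactly. A computation analogous to Step~1 gives $\tilde T n'_j = n_{\sigma(j)} - \xi + O(\veps(\alpha))$, so $T$ and $\tilde T$ agree up to $O(\veps(\alpha))$ on $\{\xi', n'_1, n'_2\}$, and operator-norm comparison on this basis yields $|T - \tilde T| \leq 6\veps(\alpha)$.

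By construction, the preimages under $\eta \mapsto T^{-1}(\eta - \xi)$ of the 6-element set $\{0, \xi\} \cup S$ are precisely $\{\xi', 0\} \cup S' \subset Q' \cup \{0\}$, proving $\#A \geq 6$; the identification~(\ref{eq:nnunique}) on this set then follows from $\Phi(0) = x = \Phi'(\xi')$, $\Phi(\xi) = x' = \Phi'(0)$, and the bijection $\Phi(n_{\sigma(j)}) = \Phi'(n'_j)$ established above. The case $x = x'$ (so $\xi = 0$ and, by~(\ref{labdecomp}), $Q = Q'$) is handled identically, with the snapping carried out inside the full rotation symmetry group of the common $Q$. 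The main obstacle is the bijection step: one must verify that the Proposition~\ref{prop:maxnhd}.2 position errors do not corrupt the particle identification across the two regular neighborhoods, and in the $Q \neq Q'$ case that the matching constraint is geometrically compatible with the distinct 4-neighbor configurations of \cub\ and \tcub\ at the shared vertex, which forces $\alpha$ to be below an explicit threshold determined by the minimum angular separation of the two patterns.
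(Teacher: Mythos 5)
Your approach is structurally close to the paper's. Both arguments exploit Proposition~\ref{prop:maxnhd}.2 to place all particles involved within $\veps(\alpha)$ of a rigid lattice model, use the discreteness of fcc/hcp distances to ``snap'' to an exact rotation, and then compare the snapped rotation $T$ with $\tilde T = R^{-1}R'$ on a spanning set to obtain \eqref{eq:rotcomp}. The main structural difference is that you work directly with the full $6$-point overlap $\{0,\xi\}\cup S$, whereas the paper first restricts to a tetrahedron $\sigma$ with $0,\xi\in\sigma$ (two of the four common neighbors), pins $T$ down on $\sigma$, and only afterwards bootstraps to the remaining common neighbors. Your version buys a slightly more symmetric exposition; the paper's buys a cleaner handle on the orientation issue discussed below.

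The genuine gap is the \emph{orientation} of $T$. You write ``I then construct $T$ as the unique rotation with $T\xi' = -\xi$ and $Tn'_j = n_{\sigma(j)}-\xi$ exactly,'' but nothing in your argument shows that the isometry determined by these matching conditions lies in $SO(3)$ rather than in $O(3)\setminus SO(3)$. The $4$-neighbor set $S$ is coplanar, so the planar portion of the overlap is reflection-symmetric and cannot by itself pin down handedness; adding $\xi'\mapsto -\xi$ supplies the missing out-of-plane direction, but you still need a reason why the induced $T$ has $\det T = +1$. The paper devotes the core of its argument to precisely this point: it shows that the ``wrong'' vertex pairing on the tetrahedron $\sigma$ would force \eqref{eq:qdist} to be satisfied with some $R'\in SO^{-}(3)=O(3)\setminus SO(3)$, contradicting $R'\in SO(3)$. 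Your argument can be repaired along a different route --- after establishing the existence of an \emph{orthogonal} $T$ matching the two configurations, your estimate $|T-\tilde T|\le 6\,\veps(\alpha)$ with $\tilde T\in SO(3)$ forces $T\in SO(3)$ once $6\,\veps(\alpha)<2=\mathrm{dist}(SO(3),O(3)\setminus SO(3))$ --- but this needs to be said explicitly, and the preliminary existence of an orthogonal $T$ must itself be justified (the five matching conditions $T\xi'=-\xi$, $Tn'_j=n_{\sigma(j)}-\xi$ over-determine a linear map, so one has to verify the two $5$-point configurations are actually isometric, which is where your ``rigidly congruent'' claim is doing unacknowledged work). Finally, the degenerate case $x=x'$ (where $\xi=\xi'=0$ and $S$ is not defined, since $0\notin Q$) is not ``handled identically'' by your construction; in the paper this case falls out of the tetrahedron setup for free.
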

\begin{proof}
See appendix.
\end{proof}
If $N(x) \subset X_\mathrm{reg}$, then we can construct a reference configuration
which covers $N^2(x)\cup N(x) \cup \{x\}$.
\begin{prop}
\label{prop:locref} Let $x \in \Xreg^2$.
If $\alpha$ is sufficiently small, then there exists a reference configuration $(\Omega,\Phi,u)$ covering $N^2(x)\cup N(x) \cup\{x\}$ such that
\begin{align}
\label{eq:snd} \left|\Phi^{-1}(x)-\Phi^{-1}(x')\right|=\sqrt{2}
\end{align}
for all $x' \in N^2(x)$.
\end{prop}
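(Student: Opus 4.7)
The plan is to glue the local imbeddings supplied by Proposition \ref{prop:maxnhd} around $x$ and its nearest neighbors using the compatibility result of Proposition \ref{prop:discretelocal}, then verify that the resulting lattice data admits a piecewise affine interpolation with gradient close to $SO(3)$.

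Since $x \in \Xreg^2 \subset \Xreg$, Proposition \ref{prop:maxnhd}.2 yields a map $\Phi_x:Q\cup\{0\}\to N(x)\cup\{x\}$ with $Q\in\{\cub,\tcub\}$ and $\Phi_x(0)=x$; the choice of $Q$ determines which lattice $\c\in\{\cfcc,\chcp\}$ is used. Initialize $\Phi:=\Phi_x$ on $Q\cup\{0\}$, so that $\Phi^{-1}(x)=0$ and $\Phi^{-1}(N(x))=Q$. For each $x'\in N(x)\subset \Xreg$, Proposition \ref{prop:maxnhd} provides a second local imbedding around $x'$, and Proposition \ref{prop:discretelocal} supplies a rotation $T_{x'}\in SO(3)$ together with an overlap set $A_{x'}$ of at least six points on which this imbedding agrees with $\Phi$ after translation by $\xi:=\Phi^{-1}(x')$ and rotation by $T_{x'}$. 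A short computation in either $\cfcc$ or $\chcp$ shows $A_{x'}$ consists exactly of $\{0,\xi\}$ together with the four common nearest neighbors of $\{0,\xi\}$ in $\c$, so that every nearest neighbor of $x'$ in $X$ is identified with a specific lattice point of $\c$ in a neighborhood of $\xi$.

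For each label $x''\in N^2(x)$, the definition of $N^2$ supplies a square $\{x_1,\ldots,x_4\}\subset N(x)$ of which $x''$ is a common nearest neighbor. Pick any $x_i$; the local imbedding around $x_i$ places $x''$ at a specific nearest neighbor of $\Phi^{-1}(x_i)$ in $\c$. By the combinatorial structure of the cuboctahedron and the twisted cuboctahedron, this nearest neighbor must be the unique point of $\c$ which lies at unit distance from each $\Phi^{-1}(x_j)$ and differs from $0$; an explicit calculation identifies it as the reflection of $0$ through the centroid of $\{\Phi^{-1}(x_i)\}_{i=1}^4$, which is a second-nearest neighbor of $0$ at distance exactly $\sqrt{2}$. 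Setting $\Phi^{-1}(x'')$ equal to this point establishes the distance identity claimed in the proposition. Well-definedness (independence of the chosen $x_i$, and of the chosen square when $x''$ lies in two or more squares) follows from the uniqueness in \eqref{eq:nnunique} combined with the minimum-distance bound \eqref{eq:mindist}, since the assignment is uniquely determined by the combinatorial NN data.

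Let $\omega:=\{0\}\cup Q\cup \Phi^{-1}(N^2(x))$ and take $\Omega$ to be a simply connected union of units in $\c$ with $\Omega\cap \c \supseteq \omega$; when $\c=\cfcc$ and $N^2(x)$ contains all six second neighbors, $\Omega$ is the sidelength-$2$ octahedron identified in Remark \ref{rem:oct}. Define $u$ on $\Omega$ as the piecewise affine interpolation of Definition \ref{defn:admap}. On each simplex $\sigma\subset\Omega$, the vertex values $u(\eta_i)=y(\Phi(\eta_i))$ satisfy $|u(\eta_i)-(R\eta_i+y(x))|\leq\veps(\alpha)$ for the rotation $R$ from Proposition \ref{prop:maxnhd} (after passing through the compatibility rotations $T_{x'}$, which contribute additional $O(\veps(\alpha))$ errors by \eqref{eq:rotcomp}); standard linear algebra then gives $\nabla u|_\sigma=R+O(\veps(\alpha))$, so $\|\dist(\nabla u,SO(3))\|_{L^\infty(\Omega)}\leq 1/2$ for $\alpha$ sufficiently small, establishing \eqref{eq:orpreserving}. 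The main technical obstacle is ensuring $\Omega$ can be chosen simply connected and unit-decomposable when some second-neighbor labels are missing from $X$; this is handled by careful pruning of the candidate big octahedron while retaining the cuboctahedral neighborhood of $0$, so that $N^2(x)\cup N(x)\cup\{x\}$ remains covered.
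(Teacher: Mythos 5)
Your construction is essentially the paper's: glue the local imbeddings of Proposition~\ref{prop:maxnhd} via Proposition~\ref{prop:discretelocal}, send each second neighbor to the opposite vertex of the octahedron spanned by a square of $Q$ (distance $\sqrt{2}$ from $0$), settle well-definedness with the minimum-distance bound, and take the piecewise affine interpolation on the union of units around $0$. Two minor remarks: the paper gets well-definedness not from \eqref{eq:nnunique} (which only governs the six-point overlap, not the newly added vertex) but from the chained estimate $|y(x)+R\eta-y\circ\Phi(\eta)|\leq 8\,\veps(\alpha)$ combined with \eqref{eq:mindist}, which is the quantitative form of what you sketch; and the ``missing second neighbor / pruning'' scenario you flag never occurs, since $x\in\Xreg^2$ forces every $x'\in N(x)$ to have a full 12-point neighborhood, so all six outer vertices carry labels and $\Omega$ is exactly $\bigcup_{\tau\in\mathcal U,\,0\in\tau}\conv(\tau)$ (the sidelength-2 octahedron when $Q=\cub$).
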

\begin{proof}
See appendix.
\end{proof}
It is easy to see that the domain $\Omega$ in Proposition~\ref{prop:locref} is a regular octahedron with sidelength 2 if $Q = \cub$ \lf{in (\ref{eq:qdist}).} Large-scale imbeddings can be constructed by piecing together local imbeddings.
It will be important for the subsequent analysis that for certain
reference configurations $(\Omega,\Phi,u)$ the rigidity constant (cf. Section~\ref{sec:rigidity}) of the domain $\Omega$ is uniformly bounded.
\begin{prop}[Existence of a reference configuration]
\label{prop:refconfig}
  There exists $\alpha_0>0$ such that for all $\alpha\in(0,\alpha_0),$ \ $r\geq 1-\alpha$, $x\in X$ and $y:X\rightarrow\re^3$ satisfying the minimum distance bound (\ref{eq:mindist}) and
  $$\dist(\{y(x)\}, y(\partial X)) \geq 2\,r+3,$$
  there exists a reference configuration $(\Omega, \Phi,u)$ covering $\{x\}$ such that
$\Omega=s\,\oct$ (a scaled octahedron) with
\begin{align*}
s = \min\left\{s' \in \mathbb{Z}\; : \; s' \geq \frac{5}{2}r+3\right\},
\end{align*}
and $B(\Phi^{-1}(x),r)\subset \Omega$. Furthermore, \lf{there exists a universal constant $C>0$ such that} the interpolation $u:\Omega \to \R^3$ has the property
\begin{align} \label{eq:nablaso}
 \|\dist(\nabla u, SO(3))\|_{L^\infty(\Omega)}\leq C\alpha.
\end{align}

\end{prop}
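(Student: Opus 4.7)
The plan is to build the imbedding $\Phi:\Omega\cap\cfcc\to X$ by iterating the local existence and compatibility results, and then to verify that the piecewise-affine interpolation has nearly rigid gradient on each simplex. First I would observe that the hypothesis $\dist(\{y(x)\},y(\partial X))\geq 2r+3$, combined with $\partial X=X\setminus\Xcub$, forces $x\in\Xcub$; moreover every label $x'\in X$ with $|y(x')-y(x)|\leq 2r+2$ lies in $\Xcub$, and by a one-step extension of the same bound, its neighbours also lie in $\Xcub$, so $x'\in\Xreg^2\cap\Xcub$. Hence the local model is uniformly of cuboctahedral type and we may take $\c=\cfcc$. Placing $\Phi^{-1}(x)$ at the centre of $\Omega=s\,\oct$, the inclusion $B(\Phi^{-1}(x),r)\subset\Omega$ follows from elementary geometry of the regular octahedron together with $s\geq \tfrac{5}{2}r+3$, and Remark~\ref{rem:oct} supplies the unit decomposition~(\ref{decomp}).

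Next, I would extend $\Phi$ to all of $\Omega\cap\cfcc$ by induction along a breadth-first traversal of the lattice starting at $\Phi^{-1}(x)$. The base case is Proposition~\ref{prop:locref}, which supplies $\Phi$ on the first two coordination shells around the centre. For the inductive step, given $\Phi$ defined on a simply-connected lattice patch $\omega$, one extends to a boundary site $\eta'\notin\omega$ by picking an adjacent $\eta\in\omega$ with $|\eta-\eta'|=1$ and invoking Proposition~\ref{prop:discretelocal} at the pair $(\Phi(\eta),\Phi(\eta'))$; the compatibility identity~(\ref{eq:nnunique}) then pins down $\Phi(\eta')$. Well-definedness reduces to trivial monodromy around each elementary cycle of $\cfcc$, namely the triangular and square faces of the units of Definition~\ref{defn:simplices}. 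Here I would use~(\ref{eq:rotcomp}) to show that the composition of the local rotations $T$ around any bounded cycle is within $O(\varepsilon(\alpha))$ of the identity, and then exploit the discreteness of the point group of $\cub$ to upgrade this approximate equality to an exact one for sufficiently small $\alpha$.

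Finally, for the rigidity bound~(\ref{eq:nablaso}) I would analyse $\nabla u$ on each simplex $\sigma\in\mathcal D$ with $\sigma\subset\Omega$ separately. Reference simplex edges have length $1$ or $\tfrac{1}{\sqrt{2}}$; their images are either $y\circ\Phi$-values (at lattice vertices) or averages of six such values (at centres of octahedra in $\cfcc$). Unit-length edges correspond to nearest-neighbour bonds, so their image lengths lie in $[1-\alpha,1+\alpha]$ by~(\ref{eq:sdefn}); the remaining edges require a quantitative sharpening of~(\ref{eq:qdist}) from the qualitative $\varepsilon(\alpha)$ control to $O(\alpha)$ control, obtainable by a quadratic expansion around the regular configuration using the nearest-neighbour length constraints. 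A routine affine-rigidity estimate on each simplex then gives the $O(\alpha)$ deviation of $\nabla u|_\sigma$ from $SO(3)$. The principal obstacle throughout is the monodromy argument in the second paragraph: pasting the local rotations coherently around every elementary cycle of the cuboctahedral contact graph is delicate, and is precisely where the dichotomy between $\cub$ and $\tcub$ in Proposition~\ref{prop:maxnhd} is decisive, since only the $\cub$ option admits a coherent global extension over a scaled octahedral domain.
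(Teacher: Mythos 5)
Your overall architecture (seed with Proposition~\ref{prop:locref}, propagate via Proposition~\ref{prop:discretelocal}, then get \eqref{eq:nablaso} from edge-length control on each unit) matches the paper's, and your third paragraph is essentially an invocation of Lemma~\ref{lemma:wcell}, so that part is fine. The genuine gap is exactly at the point you yourself flag as ``the principal obstacle'': well-definedness of $\Phi$ under a breadth-first extension. Your proposed resolution --- check that the transition rotations $T$ compose to something $O(\varepsilon(\alpha))$-close to the identity around triangular and square faces and upgrade to exact equality by discreteness --- is not carried out and, as stated, does not close the argument. First, reducing global consistency of an arbitrary BFS to monodromy around elementary faces requires showing that these faces generate the cycle space of the contact graph restricted to the patch, which you do not address. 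Second, and more importantly, what must be shown is consistency of the \emph{label} assignments, not of the rotations: Proposition~\ref{prop:discretelocal} only gives agreement of adjacent local imbeddings on an overlap set $A$ with $\#A\geq 6$ via \eqref{eq:nnunique}, and a near-identity (or even exactly identity) composed rotation around a cycle does not by itself force the chained label maps to return to the same value at a point outside the successive overlaps. In a generic BFS you will also meet a new site $\eta$ whose two already-labelled parents $\eta',\eta''$ satisfy $|\eta'-\eta''|=\sqrt 2$, where Proposition~\ref{prop:discretelocal} does not apply to the pair directly; this is precisely where an uncontrolled inconsistency could enter.

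The paper sidesteps all of this by a specific choice of exhaustion: it grows the domain as scaled octahedra $\Omega_1\subset\Omega_2\subset\cdots\subset\Omega_s$ and uses the geometric fact that $\partial\Omega_l\cap\cfcc$ is a union of pieces of triangular lattice planes. Consequently, any two points $\eta',\eta''\in\Omega_l\cap\cfcc$ with $|\eta-\eta'|=|\eta-\eta''|=1$ for a new site $\eta\in\Omega_{l+1}\setminus\Omega_l$ automatically satisfy $|\eta'-\eta''|=1$, so pairwise compatibility from Proposition~\ref{prop:discretelocal} (with the overlap identity \eqref{eq:nnunique}) immediately gives $\Phi^{\eta'}(\eta-\eta')=\Phi^{\eta''}(\eta-\eta'')$, and no monodromy or group-discreteness argument is needed; the $L^\infty$ bound is then inherited layer by layer from Lemma~\ref{lemma:wcell} and \eqref{eq:ltwo}. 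To repair your proof you would either have to supply the missing homotopy/cycle argument together with a genuine label-consistency statement around faces, or adopt the paper's layered octahedral construction, which is the actual content of the proposition's proof.
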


\begin{proof}
 See appendix.
\end{proof}

\section{Rigidity bounds}\label{sec:rigidity}
The purpose of this section is to establish $L^2$ and $L^{\infty}$ rigidity estimates, which quantify the
deformations of the ground state. The bounds are based on the concept of a reference configuration.
\lf{Propositions~\ref{prop:distsum} and \ref{prop:distsum2} will imply} that $L^2$ deformations, in defect-free
regions, are controlled by a quadratic sum of edge length distortions
(c.f. (\ref{eq:ltwo})).  Our proof follows methods used previously in
\cite{Sch1},\cite{Sch2},\cite{The1} and references therein.  The $L^{\infty}$
estimate (\ref{eq:distortionineq}) is required to control distortion terms
which later arise in the Taylor expansion of the ground state energy
(c.f. (\ref{eq:deltaest})). The basic bound is provided by the following
rigidity estimate.
\begin{thm}[\cite{FriJamMul}] \label{thm:FJM} Let $d \in \{1,2, \ldots\}$, $s \in (1,\infty)$ and
$\Omega \subset \R^d$ be a simply connected Lipschitz-domain. Then there exists a constant $\lf{C}=C(\Omega,s)$ such that
\begin{align} \label{fjmineq}
  \min_{R \in SO(d)} \| \nabla u - R\|_{L^s(\Omega)} \leq C\,\|\dist(\nabla u,
  SO(d))\|_{L^s(\Omega)}
\end{align}
for all $u \in W^{1,s}(\Omega)$.  The rigidity constant $C(\Omega, s)$ is invariant under
dilations, rotations and translations, i.e.
\begin{align} \label{scalinv}
C(r\, R\, \Omega+t,s) = C(\Omega,s)
\end{align}
for all $r>0$, $R \in SO(d)$ and $t \in \R^d$.
\end{thm}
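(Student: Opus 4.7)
The plan is to follow the three-step Friesecke--James--M\"uller scheme: reduce to a small-deformation regime via Lipschitz truncation, apply a linearised rigidity (Korn) estimate on the truncated map, and patch local rotations into a single global rotation. Scaling invariance \eqref{scalinv} is essentially free: under the change of variable $u \mapsto R\, u(r^{-1}(\cdot - t))$ both sides of \eqref{fjmineq} scale as $r^{d/s - 1}$, so it suffices to prove the bound on a fixed Lipschitz reference domain.

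The first (local) step handles the case in which $\nabla u$ lies in a small $L^\infty$-neighbourhood of a single rotation $R_0$. Setting $v(x) = R_0^T u(x) - x$, a Taylor expansion of $F \mapsto \dist(F, SO(d))$ near $\Id$ yields the pointwise bound
\[ |\mathrm{sym}\,\nabla v| \leq \dist(\nabla u, SO(d)) + C|\nabla v|^2, \]
and the classical Korn--Poincar\'e inequality on $\Omega$ then controls $\|\nabla v - W\|_{L^s(\Omega)}$ by $\|\mathrm{sym}\,\nabla v\|_{L^s(\Omega)}$ for some skew-symmetric $W$. Projecting $R_0(\Id + W)$ onto $SO(d)$ produces the rotation $R$ demanded by \eqref{fjmineq}, with a constant depending only on the Korn constant of $\Omega$ and on $s$.

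The second step reduces the general case to the first via an Acerbi--Fusco Lipschitz truncation. Setting $f = \dist(\nabla u, SO(d))$ and choosing a threshold $\lambda$ comparable to $\|f\|_{L^s(\Omega)}/|\Omega|^{1/s}$, the bad set $\{Mf > \lambda\}$, where $M$ is the Hardy--Littlewood maximal operator, has measure controlled by the strong $(s,s)$ bound for $M$. On its complement $u$ is Lipschitz with constant $\lesssim 1 + \lambda$; extending $u$ from this good set by a McShane/Kirszbraun-type Lipschitz extension gives a map $\tilde u$ with $\|\nabla \tilde u\|_{L^\infty} \lesssim 1 + \lambda$, while $\|\nabla(u - \tilde u)\|_{L^s(\Omega)} \lesssim \|f\|_{L^s(\Omega)}$. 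Thus $\nabla \tilde u$ lies at bounded distance from $SO(d)$ on all of $\Omega$, which is the regime in which the local step applies.

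The third step, patching, is the main obstacle. A Whitney-type cover of $\Omega$ by balls of comparable radii permits the local step to be applied chart by chart, producing a rotation $R_\alpha$ on each $B_\alpha$. On overlaps $B_\alpha \cap B_\beta$ the difference $|R_\alpha - R_\beta|$ is controlled via the local rigidity constants and the $L^s$-norm of $f$ on $B_\alpha \cup B_\beta$. The delicate point, which uses both the simple connectedness of $\Omega$ and a uniform bound on the combinatorial diameter of the cover, is to sum these oscillations along paths in the nerve without accumulating a divergent error. Once this is accomplished all $R_\alpha$ may be replaced by a single $R$ at bounded cost, and the resulting constant depends only on $\Omega$ and $s$, establishing both \eqref{fjmineq} and \eqref{scalinv}.
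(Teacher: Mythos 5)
The paper does not prove Theorem~\ref{thm:FJM} at all: it is imported verbatim from Friesecke, James and M\"uller \cite{FriJamMul} and used as a black box (only the scaling invariance \eqref{scalinv} is exploited later), so there is no internal argument to compare against and your proposal is an attempt to reprove the cited result. As it stands it has a genuine gap, located exactly where the content of the theorem lies. Your Step 1 (linearisation plus Korn) is established only under the hypothesis that $\nabla u$ stays in a small $L^\infty$-neighbourhood of \emph{one fixed} rotation $R_0$. Your Step 2 (maximal-function truncation) only produces $\tilde u$ with $|\nabla \tilde u|\lesssim 1+\lambda$, i.e.\ with pointwise distance to the whole group $SO(d)$ bounded --- not small, and not close to a single rotation. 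Upgrading ``$\nabla u$ close to $SO(d)$ pointwise'' to ``$\nabla u$ close to one rotation'' is precisely the assertion to be proven, so invoking Step 1 on each Whitney ball in Step 3 presupposes, at the scale of that ball, the conclusion of the theorem: the argument is circular. In \cite{FriJamMul} this is where the real work happens: for maps with bounded gradient one uses $\mathrm{cof}\,F=F$ on $SO(d)$ and hence $|\,\mathrm{cof}\,\nabla u-\nabla u\,|\le C\,\dist(\nabla u,SO(d))$, so that $\Delta u=\operatorname{div}(\nabla u-\mathrm{cof}\,\nabla u)$ is controlled and a harmonic-approximation and iteration argument on cubes yields proximity to a single rotation with a universal constant. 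No ingredient of this kind appears in your sketch, and without it neither the local step nor the patching can get started.

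Two further points. The patching itself is asserted rather than carried out: a Whitney cover of a Lipschitz domain consists of infinitely many balls whose radii degenerate at $\partial\Omega$, and summing $|R_\alpha-R_\beta|$ along chains in the nerve produces constants that grow with the chain length; \cite{FriJamMul} avoid this by first proving the estimate on cubes and then covering the Lipschitz domain by finitely many bi-Lipschitz images of a cube (connectedness, not simple connectedness, is what is actually used --- the hypothesis in the statement is merely stronger than necessary). Finally, the ``free'' part is slightly off: the correct change of variables for \eqref{scalinv} is $u\mapsto r\,R\,u\bigl(r^{-1}(\cdot-t)\bigr)$, rescaling domain and range together, so that $\nabla u$ is replaced by $(\nabla u)\circ(\text{affine map})$ composed with a rotation and both sides of \eqref{fjmineq} pick up the same Jacobian factor; with the map $u\mapsto R\,u\bigl(r^{-1}(\cdot-t)\bigr)$ as you wrote it the gradient is multiplied by $r^{-1}$, and since $\dist(\cdot,SO(d))$ is not positively homogeneous the two sides do not both scale like $r^{d/s-1}$.
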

For any bounded domain $\Omega\subset\re^3,$ define
\begin{eqnarray}\label{eq:sonedomain}
 \nnb(\O):=\left\{(q_1,q_2)\in \nnb\;: \; q_1,q_2 \in \Phi(\O) \right\}
\end{eqnarray}
to be the set of edges with end-points in $\O.$
\begin{prop} \label{prop:distsum}
Let $(\Omega,\Phi,u)$ be a reference configuration such that
$\Omega$  is a scaled octahedron, i.e.
$$ \Omega = s\,\conv(\oct)$$
for some $s\in\{1,2,\ldots\}$.
Then there exists a universal constant $C>0$ such that
the map $u$ satisfies the global rigidity estimates
\begin{align}\label{eq:ltwo}
&\min_{R \in SO(3)} \|\nabla u-R\|_{L^2(\O)}^2\leq C\sum_{q\in \nnb(\Omega)}
 \left|\left|y(q_2)-y(q_1)\right|-1\right|^2,\\
 \label{eq:distortionineq}
& \left|\frac{|u(\eta)-u(\eta')|}{|\eta-\eta'|}-1\right|
\leq C\,\alpha \text{ for all } \eta,\eta' \in \Omega \text{ such that } \eta\neq \eta'.
\end{align}
\end{prop}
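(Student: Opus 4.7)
\textbf{Strategy for (\ref{eq:ltwo}).} The plan is to apply the Friesecke-James-M\"uller rigidity theorem (Theorem~\ref{thm:FJM}) with $s=2$, and then to control $\|\dist(\nabla u, SO(3))\|_{L^2(\Omega)}^2$ locally by the edge length distortions. Because $\Omega = s\,\conv(\oct)$ is a scaled octahedron, the invariance (\ref{scalinv}) gives $C(\Omega,2) = C(\conv(\oct),2)$, a universal constant, so the task reduces to proving
\begin{align*}
\|\dist(\nabla u, SO(3))\|_{L^2(\Omega)}^2 \,\leq\, C \sum_{q \in \nnb(\Omega)} \bigl||y(q_2) - y(q_1)| - 1\bigr|^2.
\end{align*}
I decompose $\Omega$ into units (Definition~\ref{defn:simplices}) and establish the local bound
\begin{align*}
\int_{\conv(\tau)} \dist(\nabla u, SO(3))^2\,dx \,\leq\, C \sum_{\substack{\{\eta,\eta'\}\subset\tau\\ |\eta-\eta'|=1}} \bigl(|y\circ\Phi(\eta')-y\circ\Phi(\eta)|-1\bigr)^2
\end{align*}
for each unit $\tau \subset \Omega$ with a universal constant $C$; summing over units and noting that every edge of $\nnb(\Omega)$ appears in a bounded number of units gives the desired global estimate.

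\textbf{The per-unit bound.} On a tetrahedron unit $\tau$, $u$ is affine with constant gradient $F$; writing $F = QU$ in polar decomposition with $Q \in O(3)$ and $U > 0$ symmetric, the Gram matrix $F^T F = U^2$ is expressible through the six pairwise squared distances among the four vertex images via polarisation. Because these images correspond to pairs in $\nnb(\Omega)$, each such distance lies within $\alpha$ of unity, so a second-order Taylor expansion of the map (Gram data)$\,\mapsto\,\|U-\Id\|^2$ about the rigid reference yields the required quadratic bound. Orientation preservation from (\ref{eq:orpreserving}) ensures $Q \in SO(3)$ rather than the opposite component of $O(3)$, so $\dist(F,O(3)) = \dist(F,SO(3))$. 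For an octahedron unit, the interpolation at the centre $\eta_*$ is the arithmetic mean of the six vertex images (Definition~\ref{defn:admap}), so the piecewise-constant gradient on the eight sub-simplices depends linearly on these six images. The integrated squared distance $\int_{\conv(\tau)}\dist(\nabla u, SO(3))^2\,dx$ is then a smooth function on the 18-dimensional space of vertex configurations vanishing precisely on rigid images of $\oct$; combined with the analogous vanishing of the squared distortions of the twelve length-one edges, a compactness argument together with a second-order Taylor expansion at the rigid reference closes the estimate.

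\textbf{The $L^\infty$ estimate and main obstacle.} The same per-simplex analysis also yields the pointwise bound $\dist(\nabla u|_\sigma, SO(3)) \leq C\alpha$, since every contributing edge deformation is bounded by $\alpha$; hence $|\nabla u| \leq 1 + C\alpha$ almost everywhere in $\Omega$ and, for $\alpha$ small, $\nabla u$ is invertible on every simplex with $\|(\nabla u)^{-1}\| \leq 1 + C\alpha$. By convexity of the octahedron $\Omega$, for any $\eta,\eta' \in \Omega$ the segment $\gamma(t) = \eta + t(\eta'-\eta)$ lies in $\Omega$, and integrating $\nabla u$ along $\gamma$ yields $|u(\eta')-u(\eta)| \leq (1+C\alpha)|\eta'-\eta|$; the matching lower bound follows by applying the analogous argument to a local inverse of $u$, whose existence as a single-valued map on $\Omega$ is ensured by the $L^2$ rigidity estimate combined with the smallness of $\alpha$. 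The principal obstacle of the proof is the per-unit estimate on octahedra: the averaging at the centre couples all eight sub-simplices, so the required rigidity cannot be read off any single simplex and must instead be extracted from the configuration of six vertex images via the compactness argument sketched above, with careful bookkeeping to ensure the resulting constant is independent of the choice of unit.
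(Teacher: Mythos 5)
Your proof of (\ref{eq:ltwo}) is essentially the paper's: decompose $\Omega$ into units, invoke the scale-invariant FJM estimate, and control the per-unit rigidity defect by edge-length distortions (your polar-decomposition / Gram-matrix / compactness-and-Taylor sketch is exactly the content of the paper's Lemma~\ref{lemma:wcell}, which computes the Hessian of $W_\tau$ at the rigid reference). For the upper half of (\ref{eq:distortionineq}) your argument --- $\dist(\nabla u, SO(3)) \leq C\alpha$ a.e., hence $|\nabla u| \leq 1 + C\alpha$, then integrate along the straight segment in the convex set $\Omega$ --- is correct and a bit slicker than the paper's. The lower half, via a ``local inverse'' of $u$, has a genuine gap.

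To integrate $(\nabla u)^{-1}$ from $u(\eta)$ to $u(\eta')$ along a path of length comparable to $|u(\eta)-u(\eta')|$, you need $u(\Omega)$ to be convex (or uniformly quasi-convex) and you need $u$ to be globally injective on $\Omega$; neither is available at this stage. The pointwise bound $\dist(\nabla u, SO(3)) \leq C\alpha$ fixes only a \emph{local} rotation on each simplex, not a single $R$ with $\|\nabla u - R\|_{L^\infty(\Omega)}\leq C\alpha$: a naive chaining along a segment of length $O(s)$ accumulates a drift of order $s\alpha$, and the $L^2$ estimate (\ref{eq:ltwo}) controls only an averaged deviation, so it does not ``ensure the existence of a local inverse as a single-valued map'' as you claim. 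Indeed, the paper deduces injectivity of $u$ on $\Omega$ \emph{from} (\ref{eq:distortionineq}), so assuming a global inverse here is circular. The missing ingredient is the nesting property (\ref{eq:rbound})--(\ref{eq:nested}): for every pair $\eta,\eta' \in \Omega$ there is a scaled octahedron $rU + t \subset \Omega$ containing both points with $r \leq C|\eta - \eta'|$, proved in the paper by a linear-programming and compactness argument. On the fixed reference domain $U = \conv(\oct)$ one applies FJM in $L^4$ (the rigidity constant is scale-invariant by (\ref{scalinv})) and then Morrey's theorem to obtain the $L^\infty$ bound $\|v - (R\cdot + \tau)\|_{L^\infty(U)} \leq C\gamma$ for the rescaled map $v(w) = \frac{1}{r}u(rw+t)$, where $\gamma = \|\dist(\nabla u, SO(3))\|_{L^\infty}$. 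Unwinding the scaling gives $\bigl||u(\eta)-u(\eta')|-|\eta-\eta'|\bigr| \leq C\gamma\,|\eta-\eta'| \leq C\alpha\,|\eta-\eta'|$, which delivers both halves of (\ref{eq:distortionineq}) at once with a constant independent of $s$.
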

Note that estimate~(\ref{eq:distortionineq}) implies the injectivity of the maps $u$ and $\Phi$.
The proof of Proposition~\ref{prop:distsum} relies on
Lemma~\ref{lemma:wcell} which provides bounds for $\dist(\nabla u, SO(3))$ in terms of
$||y(x)-y(x')|-1|$, ${(x,x') \in \nnb}$.
\begin{lemma}\label{lemma:wcell}
Let $\tau \in \mathcal U$ be a unit and $u:\conv(\tau) \to \R^3$ be affine on $\conv(\sigma)$ for each simplex $\sigma \subset \conv(\tau)$ such that
$$ u\left(\frac{1}{\# \tau} \sum_{\eta \in \tau} \eta\right) =
\frac{1}{\# \tau} \sum_{\eta \in \tau} u\left(\eta\right).$$
There \amend{15}{exist} \amend{16}{universal} constants $C,c>0$ such that the function
$W_{\tau}:(\re^{3})^\tau \rightarrow [0,\infty)$ which is defined by
\begin{align*}
 W_{\tau}(u):=\sum_{\substack{\eta,\eta' \in \tau:\\
 \left|\eta-\eta'\right|= 1}}\left|\left|u(\eta)-u(\eta')\right|-1\right|^2
\end{align*}
satisfies the bound
\begin{align*}
\min_{R \in SO(3)} \|\nabla u- R\|^2_{L^2(\conv(\tau))}\leq C\,W_\tau(u)
\end{align*}
as long as $\|\dist(\nabla u, SO(3))\|_{L^\infty(\conv(\tau))} \leq \amend{16}{c}$.
\end{lemma}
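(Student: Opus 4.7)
The plan is to handle both cases --- $\tau$ a tetrahedron and $\tau$ an octahedron --- by a single compactness/linearization argument. Up to translation and rotation (under which both sides of the inequality are invariant), there are only two reference shapes for $\tau$ and only finitely many corresponding simplicial subdivisions, so I may fix $\tau$ in a reference position and the resulting constants will automatically be universal.

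After quotienting by rigid motions applied to the image of $u$, the set of admissible maps $u$ (piecewise affine on the simplices of $\tau$, together with the averaging constraint $u\bigl(\tfrac{1}{\#\tau}\sum_\eta\eta\bigr)=\tfrac{1}{\#\tau}\sum_\eta u(\eta)$ in the octahedral case) is finite-dimensional, and the condition $\|\mathrm{dist}(\nabla u,SO(3))\|_{L^\infty}\leq c$ cuts out a compact neighborhood of the identity class. Both sides of the desired estimate are continuous non-negative functions on this space, vanishing precisely on the identity class, so it suffices to verify the estimate to second order at the identity and then invoke compactness on the complement.

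Writing $u=\mathrm{Id}+\varepsilon v$ with $\|\nabla v\|_{L^\infty}$ bounded, the Taylor expansions at $\varepsilon=0$ read
\begin{align*}
 \min_{R\in SO(3)}\|\nabla u-R\|_{L^2(\conv(\tau))}^2
 &=\varepsilon^2\int_{\conv(\tau)}|\mathrm{sym}(\nabla v)|^2\,\dd x + O(\varepsilon^3),\\
 W_\tau(u)
 &=\varepsilon^2\sum_{\substack{\eta,\eta'\in\tau\\ |\eta-\eta'|=1}}\bigl((\eta-\eta')\cdot(v(\eta)-v(\eta'))\bigr)^2 + O(\varepsilon^3),
\end{align*}
so the lemma reduces to showing that both quadratic forms in $v$ have the same null space --- namely the infinitesimal rigid motions --- after which they are comparable on a finite-dimensional complementary subspace. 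The right-hand form vanishes exactly when $v$ restricted to the vertices of $\tau$ is an infinitesimal isometry of the underlying bar framework. For the tetrahedron this immediately forces $\mathrm{sym}(\nabla v)=0$, since the six rank-one matrices $ee^T$ associated to the six edges span all symmetric $3\times 3$ matrices. For the octahedron, infinitesimal rigidity of the regular (convex) octahedron framework --- isostatic and rigid by the classical Dehn/Alexandrov theorem for strictly convex polyhedra --- implies that $v$ restricted to the six vertices extends to an infinitesimal Euclidean motion $x\mapsto v_0+Wx$ with $W$ skew, and the averaging condition then forces the same formula at the centroid; by the piecewise affinity of $v$ one concludes $v\equiv v_0+Wx$ globally on $\conv(\tau)$, hence $\mathrm{sym}(\nabla v)\equiv 0$.

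The main obstacle is the octahedron step: the combinatorial octahedron framework is in general \emph{flexible} (Bricard's non-convex examples), so one must genuinely use the convexity of the reference position to conclude rigidity. A fully self-contained alternative would be to verify infinitesimal rigidity by direct linear algebra on the $12\times 18$ rigidity matrix of the regular octahedron, whose kernel is readily computed to be exactly the $6$-dimensional space of infinitesimal Euclidean motions, after which the proof can be completed without appeal to any external rigidity theorem.
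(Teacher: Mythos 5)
Your proposal is correct, and it reaches the same finite-dimensional core as the paper by a genuinely different route. The paper likewise reduces everything to nodal values (via the least-squares functional $I_R(u)=\min_t\sum_{\eta\in\tau}|u(\eta)-t-R\eta|^2$ and the linearity of the simplexwise gradients in the nodal values), but then it establishes the key positivity by writing down the Hessian $H=D^2W_\tau(u_0)$ at the identity explicitly and computing its full spectrum (eigenvalues $0,2,4,8$ for the tetrahedron, $0,2,4,6,8$ for the octahedron, each with a $6$-dimensional kernel, ``verified either with an explicit, but lengthy calculation, or a computer-algebra package''), so that $H$ is positive definite on the orthogonal complement of infinitesimal rigid motions; the conclusion then follows by a local Taylor/invariance argument, exactly the linearization-plus-compactness scheme you propose. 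You instead identify the kernel structurally: for the tetrahedron from the fact that the six edge dyads $e\otimes e$ span $\mathrm{Sym}(3)$, and for the octahedron from infinitesimal rigidity of the convex regular octahedron (Dehn/Alexandrov, or equivalently a direct computation that the $12\times 18$ rigidity matrix has exactly the $6$-dimensional kernel), then using the centroid condition and piecewise affinity to upgrade the vertex statement to $\nabla v$ being one constant skew matrix on all of $\conv(\tau)$. This is more conceptual and avoids the eigenvalue computation, at the price of invoking (or re-deriving) a rigidity theorem; the paper's computation buys an explicit spectral gap. Two details to tighten, neither fatal: (i) your displayed expansion of $\min_{R\in SO(3)}\|\nabla u-R\|^2_{L^2}$ is not exactly $\varepsilon^2\int|\mathrm{sym}\,\nabla v|^2$ when $\nabla v$ is only piecewise constant --- the correct second-order form is $\min_{W\ \mathrm{skew}}\int|\nabla v-W|^2=\int|\mathrm{sym}\,\nabla v|^2+\min_W\int|\mathrm{skew}\,\nabla v-W|^2$; this does not hurt you because your octahedron argument shows that on the kernel of the edge form $\nabla v$ is a single constant skew matrix, which annihilates both terms, but the comparison of kernels should be stated for this form; (ii) the compactness step also uses that $W_\tau(u)=0$ together with the $L^\infty$ constraint (for small $c$) forces $u$ to be an orientation-preserving rigid motion, excluding the reflected-vertex and vertex-swap realizations the paper mentions after the lemma --- the paper's own reduction to $v\in A$, $|v|\le c$ is equally terse on this point, so this is a matter of stating the observation rather than a missing idea.
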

The map $u$ which exchanges the positions of two neighboring (opposing) vertices if $\tau$ is an tetrahedron (octahedron) and keeps the other positions fixed has the property $W_\tau(u)=0$. Thus,
the assumption that $\|\dist(\nabla u, SO(3))\|_{L^\infty(\conv(\tau))} \leq c$ can not be dropped.
\begin{proof}
See appendix.
\end{proof}

\begin{proof}[Proof of Proposition \ref{prop:distsum}]
We first prove (\ref{eq:ltwo}). Let $\mathcal A \subset \mathcal U$ be a collection of units such that $\Omega = \cup_{\tau \in \mathcal A} \conv(\tau)$.
Thanks to the rigidity estimate~(\ref{fjmineq}) we find that
\begin{eqnarray}\label{eq:qrest}
\min_{R \in SO(3)} \|\nabla u-R\|_{L^2(\Omega)}^2\leq C \sum_{\tau \in \mathcal A}
\min_{R \in SO(3)}\|\nabla u-R\|_{L^2(\conv(\tau))}^2.
\end{eqnarray}
Lemma~\ref{lemma:wcell} implies that
$$ \min_{R \in SO(3)} \|\nabla u-R\|^2_{L^2(\conv(\tau))} \leq C \sum_{\atop{\eta, \eta' \in \tau}{|\eta-\eta'|=1}}
\left|\left|u(\eta)-u(\eta')\right|-|\eta-\eta'|\right|^2,$$
which establishes~(\ref{eq:ltwo}).

Now we prove (\ref{eq:distortionineq}).
Define $U= \conv(\oct)$.
Later we will establish the existence of a universal constant $C>0$ with the property that for each
$\eta,\eta' \in \Omega$ there exists $r>0,t\in \R^3$ such that
\begin{align} \label{eq:rbound}
r &\leq C|\eta-\eta'|,\\
\label{eq:nested}
\{\eta , \eta'\} &\subset r U + t \subset \Omega.
\end{align}

Define $v(w)= \frac{1}{r}u(r\,w+t)$ for $w \in U$.
Then
\begin{align*}
\|\dist(\nabla v, SO(3))\|_{L^4(U)} \leq C\,\gamma
\end{align*}
where $\gamma = \|\dist(\nabla u,SO(3))\|_{L^\infty(U)}$.
Theorem~\ref{thm:FJM} implies that there exists $R\in SO(3)$ such that
\begin{align*}
\|\nabla v- R\|_{L^4(U)} \leq C\,\gamma.
\end{align*}
Let $\varphi(w) = v(w)-Rw$, then
Morrey's theorem delivers the existence of $\tau$ such that the $L^\infty$-bound
\begin{align*}
\|\varphi-\tau\|_{L^\infty(U)} \leq C \gamma
\end{align*}
holds. Setting $w=\frac{1}{r}(\eta-t)$ and $w'=
\frac{1}{r} (\eta'-t)$ one obtains
\begin{align*}
&\amend{19}{\left||u(\eta)-u(\eta')| -|\eta -\eta'|\right|}
=  r\,\left||\varphi(w)-\varphi(w')+R(w-w')|
-\frac{1}{r}|\eta-\eta'|\right|\\
\leq & r\,\left(\left||R(w-w')|-\frac{1}{r}|\eta-\eta'|\right|+|\varphi(w)-\tau|
+|\varphi(w')-\tau|\right)\leq C \,\gamma\, |\eta-\eta'|.
\end{align*}
The trivial identity $|R(w-w')|=\frac{1}{r}|\eta-\eta'|$ \amend{20}{and (\ref{eq:rbound})} has been used in the final inequality. Thus we have shown that
$$  \left|\frac{|u(\eta)-u(\eta')|}{|\eta-\eta'|}-1\right|
\leq C \|\dist(\nabla u,SO(3))\|_{L^\infty(\Omega)}.$$
Estimate~(\ref{eq:distortionineq}) follows now from (\ref{eq:orpreserving}) and Lemma~\ref{lemma:wcell} which implies
$$\|\dist(\nabla u,SO(3))\|_{L^\infty(\Omega)} \leq C\,\alpha.$$

Finally we prove (\ref{eq:rbound}).
Since $U$ is a convex polyhedron one obtains
$$ U = \{z \; : \; \nu_i\cdot z \leq \lambda_i, \; i=1\ldots \amend{22}{8}\} $$
for a suitable choice of
$\nu_i \in \R^3$, $|\nu_i|=1$ and $\lambda_i \in [0,\infty)$.
The constraints
(\ref{eq:nested}) imply that the optimal parameter $r$
is a solution of the linear program
\begin{align} \label{eq:linprog}
r_{\min} = \min\left\{
r \geq 0\; : \; \max\{\eta\cdot \nu_i,\eta'\cdot \nu_i\} \leq r
\lambda_i + t\cdot \nu_i \leq \lambda_i,\quad i = 1\ldots \amend{23}{8}
\right\}.
\end{align}
Note that $r_{\min}\leq 1$ since $r=1, t=0$ is admissible.
Assume that there exists sequences $\eta_n \neq \eta_n' \in U$ such that
\begin{align} \label{eq:contra}
\lim_{n \to \infty} \frac{r_{\min}(\eta_n,\eta_n')}{|\eta_n-\eta_n'|} =
\infty.
\end{align}
We can assume without loss of generality that
\begin{align} \label{eq:sdfiff}
\limsup_{n\to \infty} \frac{\dist(\{\eta_n,\eta_n'\}, \partial U)}
{|\eta_n-\eta_n'|}<\infty.
\end{align}
Indeed, if (\ref{eq:sdfiff}) fails, then we
extract the corresponding subsequence
(not relabeled) and define
$r_n = \frac{|\eta_n-\eta_n'|}{\rho}$ and $t_n = \frac{1}{2}(\eta_n+\eta_n')-\amend{24}{r_n}\,z$,
where $\rho>0$ and
$z \in U$ have the property that $B(z,\rho) \subset U$.
Clearly $$\{\eta,\eta'\} \subset r_n U + t_n \subset \Omega$$
holds for all $n$ which are sufficiently large.

Since $U$ is a polyhedron $\partial U$ can be decomposed into 3 disjoint components:
$$ \partial U= \partial U_0 \cup \partial U_1 \cup \partial U_2,$$
which correspond to corners, edges and faces.
Let $i =0$. If there exists a subsequence (not relabeled) along which
\begin{align} \label{eq:contra1}
\limsup_{n \to \infty} \frac{\dist(\{\eta_n,\eta_n'\}, \partial U_i)}{|\eta_n-\eta_n'|} = \infty
\end{align}
does not hold, then there exists $C>0$ such that
\begin{align} \label{eq:con1}
\limsup_{n \to \infty} \frac{\max(|\eta_n-z_n|,|\eta_n'-z_n|)}{|\eta_n-\eta_n'|} \leq C,
\end{align}
where $z_n \in \partial U_i$ is the minimizer of
$d_n(z) = |z-\frac{1}{2}(\eta_n+\eta_n')|$.
After translation we can assume that $z_n =0$, define $t_n=0$ and
$$r_n = \min\{ r \; : \; \eta_n,\eta_n' \in r U\}.$$
Inequality (\ref{eq:con1}) implies that
\begin{align} \label{eq:rnbound}
\limsup_{n \to \infty} \frac{r_n}{|\eta_n-\eta_n'|} \leq C
\end{align} which is the
desired contradiction and we conclude that \amend{25}{(\ref{eq:contra1})} holds for $i=0$.

Inductively we repeat this step for $i =1,2$ and observe that (\ref{eq:contra1})
for $i-1$ implies (\ref{eq:rnbound}). Once (\ref{eq:contra1}) has been established for each $i \in \{0,1,2\}$ we have derived a contradiction
to (\ref{eq:sdfiff}) and consequently (\ref{eq:rbound}) holds.
\end{proof}
We also require the following technical result.
\begin{prop}\label{prop:distsum2}
For each $v \in \R^3\setminus\{0\}$ the inequality
\begin{align}
\min_{R\in SO(3)}\left\{\left|F-R\right|^2\;:\;\frac{F\,v}{|F\,v|}
=\frac{R\,v}{|v|}\right\}
\leq C\,\dist^2(F,SO(3))\label{eq:ltwoq}
\end{align}
holds for all $F \in \R^{3 \times 3}$ \amend{14}{such that $Fv\neq 0$}.
\end{prop}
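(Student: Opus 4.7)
The plan is a quantitative polar-decomposition argument: build the constrained competitor $R$ by taking a closest rotation $R_0$ to $F$ and composing with a small correction that realigns $R_0 v/|v|$ with $Fv/|Fv|$. Both sides of \eqref{eq:ltwoq} depend on $v$ only through the direction $v/|v|$, and are invariant under $F \mapsto F R_v^T$ for any $R_v \in SO(3)$. Choosing $R_v$ so that $R_v^T e_1 = v/|v|$ reduces the problem to $v = e_1$, and the constraint becomes $Re_1 = \hat e := Fe_1/|Fe_1|$. All norms below are Frobenius and we set $\varepsilon := \dist(F,SO(3))$.

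First consider the regime $\varepsilon \leq 1/2$. Let $R_0 \in SO(3)$ realise the distance, so $|F - R_0| = \varepsilon$. Applying $|Me_1|\leq |M|$ to $M = F - R_0$ gives $|Fe_1 - R_0 e_1|\leq \varepsilon$; the reverse triangle inequality then yields $\bigl||Fe_1| - 1\bigr|\leq \varepsilon$, so $Fe_1\neq 0$ and
\[ |\hat e - R_0 e_1| \;\leq\; |\hat e - Fe_1| + |Fe_1 - R_0 e_1| \;=\; \bigl|1 - |Fe_1|\bigr| + \varepsilon \;\leq\; 2\varepsilon. \]
In particular $R_0 e_1$ and $\hat e$ are non-antipodal unit vectors, so the planar rotation $Q\in SO(3)$ in $\mathrm{span}(R_0 e_1,\hat e)$ which sends $R_0 e_1$ to $\hat e$ (and fixes the orthogonal complement) is well-defined. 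A direct computation of a planar rotation by angle $\theta$ in $\R^3$ gives $|Q-I|^2 = 8\sin^2(\theta/2)$ while $|Q(R_0 e_1) - R_0 e_1|^2 = 4\sin^2(\theta/2)$, hence $|Q - I| = \sqrt 2\,|\hat e - R_0 e_1|\leq 2\sqrt 2\,\varepsilon$. Setting $R := Q R_0\in SO(3)$ we obtain $Re_1 = \hat e$ and, using that right multiplication by an orthogonal matrix preserves the Frobenius norm,
\[ |F - R| \;\leq\; |F - R_0| + |(I-Q)R_0| \;=\; \varepsilon + |I-Q| \;\leq\; (1 + 2\sqrt 2)\,\varepsilon, \]
which is \eqref{eq:ltwoq} with $C = (1+2\sqrt 2)^2$ in this regime.

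For the complementary regime $\varepsilon > 1/2$ a crude estimate suffices: extend $e_1\mapsto \hat e$ to any $R\in SO(3)$, so that $|F - R|^2 \leq 2|F|^2 + 6$; combining this with the elementary bound $\varepsilon \geq \bigl||F| - \sqrt 3\bigr|$ and splitting into $|F|\leq 2\sqrt 3$ (where $\varepsilon^2 \geq 1/4$ already dominates a bounded numerator) and $|F|>2\sqrt 3$ (where $\varepsilon \geq |F|/2$ dominates a numerator of quadratic growth in $|F|$) produces a uniform constant. Taking the larger of the two constants yields the claim. \emph{Main obstacle.} The substantive ingredient is the sharp linearisation identity $|Q - I| = \sqrt 2\,|Qe - e|$ for the planar rotation taking one unit vector to another; everything else is triangle-inequality bookkeeping and a case split, whose only subtle point is ensuring that the constant $C$ is genuinely independent of $F$ (which is what forces the two-regime structure).
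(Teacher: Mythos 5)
Your proposal is correct and takes essentially the same route as the paper: both correct a nearby rotation (your nearest rotation $R_0$, the paper's polar factor $S$) by a planar rotation that realigns its image of $v$ with $Fv/|Fv|$, and bound that correcting rotation's distance from the identity by the misalignment, which is in turn controlled by $\dist(F,SO(3))$. The only real difference is bookkeeping: the paper's cosine-rule/trigonometric estimate of the correction is valid for all angles and so needs no case split, whereas your sharp identity $|Q-\Id|=\sqrt{2}\,|Qe-e|$ relies on non-antipodality and hence forces your separate crude treatment of the regime $\dist(F,SO(3))>1/2$.
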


\begin{proof}
Assume without loss of generality that $|v|=1$ and let
$S \in SO(3)$ and $U\in \R^{3 \times 3}_\mathrm{sym}$ be the polar
decomposition of $F$ i.e. $F=SU$. Let $T \in SO(3)$ be a
rotation which satisfies $T Sv = \frac{1}{|Fv|} Fv$ and leaves
the span of $\{Sv,Fv\}$ invariant. Then, $G:=TS\in SO(3)$
satisfies the constraints of the left hand side of (\ref{eq:ltwoq}).

If $\theta\in[0,2\pi]$ is the angle of rotation of $T$, then the cosine rule gives
\begin{align*}
 \amend{17}{1-\cos(\theta)= \frac{1}{2}\left|Sv-\frac{Fv}{|Fv|}\right|^2\leq
2\Big(|Sv-Fv|^2+\left|Fv-\frac{Fv}{|Fv|}\right|^2\Big)=2\Big(|Sv-Fv|^2+\left||Fv|-1\right|\Big)^2.}\end{align*}
The identity $\sin^2(\theta)=(1+\cos(\theta))(1-\cos(\theta))$ implies the bound
\begin{eqnarray*}
 \sin^2(\theta)\leq 4\left(|Fv|-1|^2+|S-F|^2\right).
\end{eqnarray*}
Moreover, one obtains
\begin{eqnarray}\label{eq:TFbound}
 |T-\I|^2= 2\left(|\cos(\theta)-1|^2+|\sin(\theta)|^2\right)
\leq 16\,(|S-F|^2+||Fv|-1|^2),
\end{eqnarray}
where we used $(1-\cos(\theta))^2 \leq 2(1-\cos(\theta))$.
This implies
\begin{eqnarray*}
 |F-G|^2\leq2\left(|F-S|^2+|T-\I|^2\right)\leq C\dist^2(F,SO(3)).
\end{eqnarray*}
\end{proof}

\section{Partitioning of the energy}\label{sec:pairs}
\subsection{Reference path sets and label path sets}
For pairs $(x,x') \in X \times X$ we wish to express global differences
$y(x)-y(x')$ in a way which recognizes the local structure of the
configuration: if the map $\gamma : \{0,\ldots,\nu\} \to X$ has the properties
$\gamma(0) = x'$, $\gamma(\nu)=x$, then
$$ y(x)-y(x') = \sum_{i=1}^\nu(y(\gamma(i))-y(\gamma(i-1)))$$
holds. This formula suggests that the sum over all pairs can be written
as the sum over all such maps which will be denoted as paths from now on.
To formalize this concept we have to introduce
some structure to avoid double counting.

We denote by $\b$ the set of \textnormal{ordered bases} of \lf{$\cfcc:$}
\begin{align}\label{eq:basisdefn}
 \b:=\left\{B \in \R^{3\times 3}\; : \; \det(B)\neq 0
\text{ and } B e_i \in\lf{\cfcc}, |B e_i|=1,\, i = 1,2,3\right\},
\end{align}
with the standard convention $e_1=(1,0,0)^T$ etc. By abuse of notation we
write $v\in B$ if $v= B e_i$ for some $i \in \{1,2,3\}$.

\amend{27}{We denote by $\Lambda$ the set of all positive fcc lattice distances and define the
medium and long distances
\begin{align} \label{lamblist}
 &\Lambda = \left\{\left|z\right|\;:\;z\in\lf{\cfcc}\right\}=\{1,\sqrt{2},\sqrt{3},\sqrt{11/3},\ldots\},\\
\nonumber
 &\Lambda_\mathrm{med}=\left\{\sqrt{2},\sqrt{8/3}, \sqrt{3}\right\},\; \Lambda_\mathrm{long}=\Lambda \cap \left(\sqrt{3},\infty\right).
\end{align}
Note that $\Lambda_\mathrm{long} \subset \Lambda$, but $\Lambda_\mathrm{med}\setminus \Lambda = \{ \sqrt{8/3}\}$. The additional distance is included to facilitate the quantification of energy contributions created by the parts of the configuration with hcp structure.}

\begin{defn}[Admissible paths]\label{defn:refpaths}
  For given $\amend{26}{\nu \in \n}$ and $B \in \b$ we say that a finite sequence
  $\mu(i) \in \lf{\cfcc}$, $i=0\ldots \nu$ is an admissible (reference) path if
\begin{align}
\label{eq:gammafccdefn}
 \mu(j)-\mu(j-1)= B e_{i_j}
\end{align}
for some monotonic sequence $i_j\in \{1,2,3\}$ i.e.
$\mu$ consists of maximally three straight segments with directions given by the columns of $B$. We denote by $\Gamma[B]$ the set of such
paths, and define
\begin{align*}
k(\mu) & = \mu(\nu)-\mu(0),\\
 \Gamma(\lambda) &= \left\{\mu \in \cup_{B \in \b} \Gamma[B]\; :
\; |k(\mu)| = \lambda\right\},
\end{align*}
if $\lambda >\sqrt{3}$.
The set of paths with medium length is defined by
$$ \Gamma(\lambda) = \left\{\mu:\{0,1,2\} \to \lf{\cfcc}\; : \; |\mu(1)-\mu(0)|=|\mu(2)-\mu(1)|=1
\text{ and } |\mu(2)-\mu(0)| = \lambda \right\},$$
if $\lambda \in \{\sqrt{2},\sqrt{3}\}$.
The set of admissible paths is defined as
$$ \Gamma = \cup_{\lambda \in \Lambda} \Gamma(\lambda).$$
\end{defn}
By abuse of notation, we will abbreviate
$v \in \{\mu(1)-\mu(0), \mu(2)-\mu(1),\ldots\}$ with $v \in \mu$.

Although the definition of the set of admissible paths is very restrictive there are many paths which connect two lattice points. This observation motivates the introduction of the number
\begin{align}\label{eq:mdefn}
M(\mu):= \frac{1}{120}\#\left\{B \in \mathcal B \;:\; \mu \in \Gamma[B]\right\} \in [0,1],
\end{align}
which has the property that for $k \in \lf{\cfcc}\setminus \{0\}$
\begin{align} \label{normalization}
\sum_{\mu(0) = 0, \; \mu(\nu) = k} M(\mu)=1.
\end{align}
Equation~(\ref{normalization}) holds because $\# \mathcal B = 2^3 * \frac{6!}{3!}$ and for a generic
lattice point there are $2^{-3} \#\mathcal B=120$ choices of $B$ such that the set
$$ \mathcal A = \{\mu \in \Gamma[B] \; : \mu(0) = 0, \mu(\nu) = k\}$$
is nonempty. The correction factor accounts for the cases where $k$ is degenerate
in the sense that $\#\mathcal A>1$.

The path sets $\Gamma(\lambda)$ inherit the symmetry properties of the
fcc lattice. To construct a suitable representation of those path-symmetries
we define for each path $\mu$ the point $\zeta(\mu)$ in the
smallest subspace containing $\mu$ by the requirement
that the end points of the line segments which constitute the affine
interpolation $\hat \mu$ of $\mu$ all have the same distance $\rho(\mu)$ from $\zeta$.

\begin{lemma} \label{lem:zetadist}
For all $\mu \in \Gamma(\lambda)$
the estimate
\begin{align} \label{eq:circrad}
 \rho(\mu) = \max_{i = 1\ldots m} |\zeta(\mu) - \mu(i)| < 2 \lambda
\end{align}
holds.
\end{lemma}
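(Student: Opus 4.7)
The plan is to split into cases according to the number of direction changes in the affine interpolation $\hat\mu$, which is one less than the number of corner points. Since $\zeta(\mu)$ is by construction equidistant from all corners, $\rho(\mu)$ coincides with the circumradius of the corner set, which has at most four points (there are at most three linear segments). The bound $\rho < 2\lambda$ is in fact very loose; the argument naturally gives $\rho \leq \lambda\sqrt{5/8}$.

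First I dispatch the easy cases. For a medium path $\lambda \in \{\sqrt{2},\sqrt{3}\}$, the three corners form a triangle with sides $1,1,\lambda$ and interior angle $\theta$ at the middle corner satisfying $\cos\theta = 1 - \lambda^2/2 \in \{0,-\tfrac12\}$, so $\rho = \lambda/(2\sin\theta) \in \{\lambda/2,\lambda/\sqrt{3}\}$. For a long path with a single segment, $\zeta$ is the midpoint and $\rho = \lambda/2$. For a long path with two segments in basis directions $v_1,v_2$, the same triangle formula applies with $\cos\theta = -v_1 \cdot v_2 \in \{-\tfrac12, 0, \tfrac12\}$ (since the columns of $B \in \mathcal{B}$ are unit vectors in $\cfcc$), giving $\rho \leq \lambda/\sqrt{3}$.

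The substantive case is a long path with three segments. Let $v_j = B e_j$ for $j=1,2,3$, let $\ell_j \in \mathbb{N}$ be the segment lengths, and put $p_0 = 0$, $p_j = \sum_{k \le j} \ell_k v_k$. The identity $|\zeta|^2 = |\zeta - p_3|^2$ forces $\zeta \cdot p_3 = \lambda^2/2$, so I decompose $\zeta = \tfrac12 p_3 + u$ with $u \perp p_3$; then $\rho^2 = \lambda^2/4 + |u|^2$, reducing the claim to $|u|^2 < 15\lambda^2/4$. The remaining equidistance conditions $2\zeta \cdot p_j = |p_j|^2$ for $j=1,2$ translate to $u \cdot v_j = a_j$, where each $a_j$ is an explicit linear combination of the $\ell_i$'s and the Gram entries $c_{ik} = v_i \cdot v_k \in \{-\tfrac12, 0, \tfrac12\}$. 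Writing $G = (c_{ij})$, this gives $|u|^2 = a^T G^{-1} a$.

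The main obstacle is to bound $a^T G^{-1} a$ uniformly over $B \in \mathcal{B}$. I will first observe that, modulo column permutations and sign flips $v_j \mapsto -v_j$, only two Gram matrices arise: the standard one with all off-diagonals $\tfrac12$, and the mixed one with off-diagonals equivalent to $(\tfrac12, \tfrac12, 0)$. The remaining sign patterns are either singular (three equal $-\tfrac12$'s, or two $\tfrac12$'s and one $-\tfrac12$) or are not realisable by unit vectors in $\cfcc$ (there are no three mutually orthogonal unit fcc vectors). For each of the two surviving cases I compute $a^T G^{-1} a$ explicitly and bound it by $3\lambda^2/8$, using $\lambda^2 = \ell^T G \ell \geq \lambda_{\min}(G)\,|\ell|^2$ together with the elementary inequality $(\sum_j \ell_j)^2 \leq 3\sum_j \ell_j^2$. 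This yields $\rho^2 \leq 5\lambda^2/8$, well below the required $4\lambda^2$.
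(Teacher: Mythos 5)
Your proof takes a genuinely different route from the paper's. The paper translates the path so that the midpoint of the \emph{second} segment lies at the origin, writes down a closed-form expression $\zeta=\tfrac12 B^{-T}MB^{-1}k(\mu)$ with $M$ an explicit diagonal matrix, and estimates $|\zeta|$ via operator-norm bounds on $B^{-1}$ and $M$, giving $\rho\leq\tfrac{\sqrt{11}}{2}\lambda$ in one short computation; the formula for $\zeta$ is stated without derivation and no case distinction is needed. Your decomposition $\zeta=\tfrac12 p_3+u$ with $u\perp p_3$, the identity $\rho^2=\tfrac{\lambda^2}{4}+|u|^2$, and the Gram-matrix formula $|u|^2=a^{T}G^{-1}a$ make the geometry and the dependence on the basis $B$ fully transparent, and in fact yield a sharper constant; the case split by segment count and by Gram pattern is the price you pay.

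Two points in the sketch need repair before it is complete, though neither changes the verdict. (i) The realizability discussion rules out the sign patterns equivalent to $(-\tfrac12,-\tfrac12,-\tfrac12)$ (singular) and to $(0,0,0)$ (no three pairwise-orthogonal unit vectors in $\cfcc$), but leaves out the pattern $(\tfrac12,0,0)$. It is likewise unrealizable, because the orthogonal complement of a unit vector of $\cfcc$ meets $\cfcc\cap\sx$ in a single antipodal pair, so two distinct unit fcc vectors orthogonal to a third must be negatives of each other — but this is a separate fact and must be said. (ii) The proposed chain $a^{T}G^{-1}a\leq\lambda_{\max}(G^{-1})|a|^{2}$, $\lambda^{2}\geq\lambda_{\min}(G)|\ell|^{2}$, $(\sum_j\ell_j)^{2}\leq3|\ell|^{2}$ does not reach $\tfrac38\lambda^2$: for the mixed Gram matrix $\lambda_{\min}(G)=1-\tfrac1{\sqrt2}$ while $|a|^2$ is of order $|\ell|^2$, so the constant produced this way is larger than $1$. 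What does work, at no extra cost, is to compute $a^{T}G^{-1}a$ in closed form; one obtains $\tfrac18\bigl[(\ell_1+\ell_2)^2+(\ell_2+\ell_3)^2\bigr]$ in the all-$\tfrac12$ case, and $\tfrac18\bigl[(\ell_1+\ell_2+\ell_3)^2+\ell_1^2\bigr]$, $\tfrac18\bigl[(\ell_1+\ell_2+\ell_3)^2+\ell_3^2\bigr]$, $\tfrac18\bigl[\ell_2^2+(\ell_1-\ell_3)^2\bigr]$ in the three mixed cases, and then one checks directly that $\tfrac14\lambda^{2}-a^{T}G^{-1}a$ equals $\tfrac18$ times a perfect square in the $\ell_j$ — namely $(\ell_1+\ell_3)^2$, $(\ell_2-\ell_3)^2$, $(\ell_1-\ell_2)^2$, $(\ell_1+\ell_2+\ell_3)^2$ respectively. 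This gives $\rho^2\leq\tfrac12\lambda^2$, comfortably below $4\lambda^2$, and closes your argument.
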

\begin{proof}
The proof is a simple exercise which is included for the convenience of the
reader.

Define the difference vectors
\begin{align*}
 v_i = e_i \cdot B^{-1} k(\mu)\,Be_i \in \R^3.
\end{align*}
Thanks to translation invariance we can assume that $\mu(0)+v_1+ \frac{1}{2}v_2=0$, i.e the mid point of the second line segment is located in the origin. The point $\zeta$ is given by the formula
$$ \zeta = \frac{1}{2}B^{-T} M B^{-1}k$$
where $M \in \R^{3 \times 3}$ is the diagonal matrix
\begin{align}
M= \mathrm{diag}(e_1\cdot B^T B(e_1+e_2),0,-e_3 \cdot B^T B(e_2 +e_3)).
\end{align}
Since $|B^{-1}| = \sqrt{2}$ and $|M| = \frac{3}{2}$ we find that $|\rho(\mu)|
\leq \frac{3}{2} \lambda$. Furthermore, as $|v_2| \leq \sqrt{2} \lambda$ and $v_2 \cdot
\zeta =0$ one obtains that
$$ \rho(\mu)\leq \frac{1}{2} \sqrt{11}.$$
\end{proof}

For a unit lattice vector $v \in \sx \cap \lf{\cfcc}$ the piecewise affine interpolant
$\hat \mu$ contains at most one line-segment which is parallel to $v$.
Let $\eta\in \R^3$ be the mid-point of that line segment and define the
affine map
$\kappa_v: \R^3\to \R^3$ by the equation
\begin{align*}
\kappa_v(y)=\eta+(\I-2v\otimes v)(y-\eta)\in\re^3.
\end{align*}
It is an easy exercise to check that $\kappa_v$ leaves
$\cfcc$ invariant and $\zeta$ is a fixed point of $\kappa_v$. Now we extend
the operation of $\kappa_v$ to the set of paths by
$$ \amend{28}{\kappa_v(\mu)(i) := \kappa_v (\mu(\nu-i)).}$$
If $v \not\in \mu$, i.e. $\mu(i+1)-\mu(i) \neq v$ for all $i$, then
$\kappa_v$ is defined as the identity,
$$ \kappa_v(\mu)(i) = \mu(i),$$
cf. Fig.~\ref{fig:reflection}.
\begin{figure}\begin{center}
\includegraphics[scale=0.5]{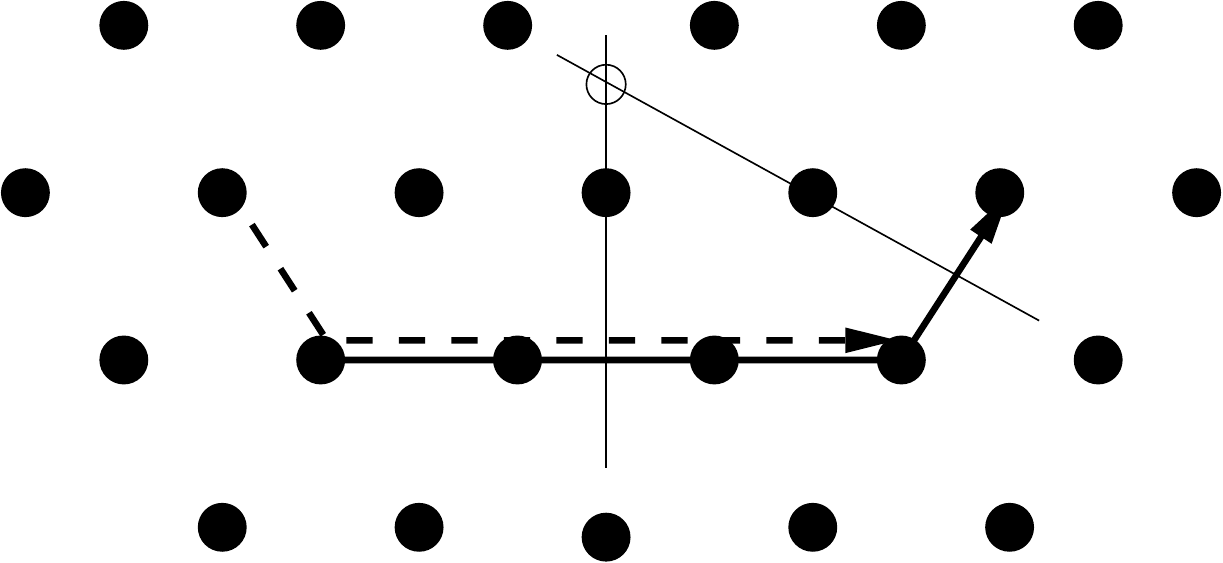}
\end{center}
 \caption{Illustration of the reflected paths. A path $\mu$ and a reflection $\kappa(\mu)$
are depicted with a solid and a dashed line. The position of the point $\zeta(\mu)$ is
indicated by an empty circle.}
\label{fig:reflection}
\end{figure}

It is an easy exercise to see that $\kappa_v:\Gamma \to \Gamma$ leaves the sets
$\Gamma(\lambda)$ invariant and has the following properties:
\begin{align}
&\kappa_v\circ \kappa_v= \Id, \label{refreflection}\\
&M(\kappa_v(\mu))=M(\mu), \label{refMinv}\\
& \label{zetainv}
\zeta(\kappa_v(\mu)) = \zeta(\mu).
\end{align}
Notice that that $\frac{1}{2}\left(\I-(\I-2v\otimes v)\right)= v\otimes v$ is
just the projection onto the span of  $\{v\}$ if $v\in \mu$. This implies that
\begin{align}
k(\mu) + k(\kappa_v(\mu))) \text{ and } v \text{ are parallel.} \label{refparallel}
\end{align}
The second key property of the maps $\kappa_v$ is that $\zeta(\mu)$ is a fixed point and the distance from $\zeta(\mu)$ is unchanged, i.e.
\begin{align} \label{circsph}
 \max_{i}|\kappa_v(\mu)(i)-\zeta(\mu)| = \max_{i}|\mu(i)-\zeta(\mu)|,
\end{align}
thus the orbit
$$ \mathcal{O}(\mu) = \bigcup_{i=1}^\infty \;\bigcup_{v_1\ldots v_i \in \sx \cap \lf{\cfcc}}
\kappa_{v_i}\circ\ldots\circ \kappa_{v_1}(\mu) \subset \Gamma $$
is a finite set.

For any $\lambda\in\Lambda,$ let
\begin{eqnarray}\label{eq:mldefn}
 m(\lambda)=\#\left\{\eta\in\lf{\cfcc}:\left|\eta\right|=\lambda\right\}
\end{eqnarray}
be the number of lattice points at distance $\lambda$ from the origin.
\begin{defn}[Label paths] \label{def:labpath}
  If $\lambda\in\Lambda$ and $\gamma:\{0,\ldots, \nu\} \to X$ is a label
  path we say that $\gamma \in \hat \Gamma_*(\lambda)$ if
there exists a reference  configuration $(\Omega,\Phi,u)$
and $\mu \in \Gamma(\lambda)$ such that
$\Omega$ is a scaled octahedron, i.e. $\Omega = s \oct$ for
some $s \in \{1,2\ldots\}$,
$\gamma = \Phi\circ \mu$ and
\begin{align} \label{eq:ballref}
&B(\zeta(\mu),2\lambda)\subset\Omega.
\end{align}
For each $\gamma \in \hat \Gamma_*(\lambda)$ we define
\begin{align*}
\hat k(\gamma) &= k(\mu),\\
\hat M(\gamma) &= M(\mu).
\end{align*}
\end{defn}
\amend{41}{We will abbreviate $q \in \{\gamma(1)-\gamma(0), \gamma(2)-\gamma(1),\ldots\}$ with $q \in \gamma$.}

For convenience, we assume for now that each label path $\gamma \in \hat \Gamma_*$
is associated with a unique reference configuration and hence drop
the notational dependency of $\hat k(\gamma)$ on
$\Phi$. The choice of the reference configuration will be specified
later.
\begin{remark} \label{fixeddomain}
Note that thanks to Lemma~\ref{lem:zetadist} and eqn.~(\ref{eq:ballref}) the orbit of
$\mu$ is contained in $\Omega$:
\begin{align*}
\mathcal{O}(\mu) \subset \Omega.
\end{align*}
Since $\Omega$ is a scaled octahedron and eqn.~(\ref{scalinv}) implies that the rigidity constant does not depend on $s$ there exists a universal constant $K>0$ such that equations~(\ref{eq:ltwo}, \ref{eq:distortionineq}) are satisfied with $C=K$.
\end{remark}
A key property of the label paths is the invariance under the the action of the reflection map $\hat \kappa$.
\begin{lemma}
Let $\lambda \in \Lambda$ and $\gamma \in \hat\Gamma_*(\lambda)$ be a label path. For each $q \in \nnb$ the reflected reference path $\kappa_{\hat k(q)}(\mu)$ is contained in $\Omega$. Moreover, the reflected label path, which is defined by
\begin{align*}
\hat \kappa_q(\gamma) = \Phi\circ\kappa_{\hat k(q)}(\mu),
\end{align*}
is an element of $\hat \Gamma_*(\lambda)$ with the same reference configuration
$(\Omega,\Phi,\lf{u})$. The reflection map
$\hat \kappa_q:\hat \Gamma_* \to \hat \Gamma_*$ has the properties
\begin{align}
& \hat\kappa_q^2= \Id, \label{reflection}\\
& \hat M(\hat \kappa_q(\gamma))= \hat M(\gamma), \label{Minv}\\
& |\hat k(\gamma)|= |\hat k(\hat \kappa_q(\gamma))|, \label{kinv}\\
& \hat k(\gamma) + \hat k(\hat \kappa_q(\gamma))\; \| \; \hat k(q) \text{ if } q \in \gamma. \label{parallel}
\end{align}
\end{lemma}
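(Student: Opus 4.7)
The plan is to reduce each assertion to a corresponding property of the reflection $\kappa_v$ on reference paths, all of which have been established in the body of the text. Let $(\Omega,\Phi,u)$ denote the reference configuration associated with $\gamma$, so that $\gamma=\Phi\circ\mu$ for some $\mu\in\Gamma(\lambda)$ with $B(\zeta(\mu),2\lambda)\subset\Omega$, and write $v:=\hat k(q)\in\sx\cap\cfcc$.

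First I would establish the containment $\kappa_v(\mu)\subset\Omega\cap\cfcc$. If $v\not\in\mu$ (in particular if $q\not\in\gamma$), then by definition $\kappa_v$ acts as the identity on $\mu$ and the containment is immediate. If $v\in\mu$, then $\kappa_v$ is an isometry of $\R^3$ which leaves $\cfcc$ invariant and fixes $\zeta(\mu)$. By (\ref{circsph}) and Lemma~\ref{lem:zetadist}, each point of $\kappa_v(\mu)$ lies within distance $\rho(\mu)<2\lambda$ of $\zeta(\mu)$; since $B(\zeta(\mu),2\lambda)\subset\Omega$, it follows that $\kappa_v(\mu)\subset\Omega\cap\cfcc$. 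This guarantees that $\Phi\circ\kappa_v(\mu)$ is well-defined, so the label path $\hat\kappa_q(\gamma)$ makes sense.

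Next I would verify that $\hat\kappa_q(\gamma)\in\hat\Gamma_*(\lambda)$ with the same reference configuration. Since $\kappa_v:\Gamma(\lambda)\to\Gamma(\lambda)$, we have $\kappa_v(\mu)\in\Gamma(\lambda)$. The domain $\Omega$ is a scaled octahedron by hypothesis on $\gamma$, and the ball condition (\ref{eq:ballref}) transfers automatically from $\mu$ to $\kappa_v(\mu)$ because $\zeta(\kappa_v(\mu))=\zeta(\mu)$ by (\ref{zetainv}). Hence all defining properties of $\hat\Gamma_*(\lambda)$ are satisfied by $\hat\kappa_q(\gamma)$ with the same triple $(\Omega,\Phi,u)$.

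Finally, the four assertions follow directly. The involution (\ref{reflection}) follows from $\kappa_v\circ\kappa_v=\Id$ in (\ref{refreflection}), giving $\hat\kappa_q^2(\gamma)=\Phi\circ\kappa_v\circ\kappa_v(\mu)=\Phi\circ\mu=\gamma$. The identity (\ref{Minv}) is immediate from $\hat M(\hat\kappa_q(\gamma))=M(\kappa_v(\mu))=M(\mu)=\hat M(\gamma)$ via (\ref{refMinv}). The length identity (\ref{kinv}) holds because $\kappa_v$ preserves $\Gamma(\lambda)$, so $|\hat k(\hat\kappa_q(\gamma))|=|k(\kappa_v(\mu))|=\lambda=|\hat k(\gamma)|$. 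Lastly, (\ref{parallel}) is exactly (\ref{refparallel}) applied to $\mu$, valid whenever $v\in\mu$, which is precisely the hypothesis $q\in\gamma$. The main obstacle is purely bookkeeping: one must verify that the ball condition (\ref{eq:ballref}) and the properties of $\kappa_v$ collected in (\ref{refreflection})--(\ref{circsph}) are sufficient to preserve membership in $\hat\Gamma_*(\lambda)$ under $\Phi$, and that this inheritance happens with no change in the reference triple. Once the containment step in $\Omega\cap\cfcc$ is settled, everything else is a direct translation through $\Phi$.
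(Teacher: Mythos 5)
Your proposal is correct and follows essentially the same route as the paper: the containment $\kappa_{\hat k(q)}(\mu)\subset\Omega$ comes from Lemma~\ref{lem:zetadist}, the fixed-point/distance property of $\kappa_v$ and the ball condition (\ref{eq:ballref}), and the four identities are then read off from (\ref{refreflection}), (\ref{refMinv}), (\ref{zetainv}) and (\ref{refparallel}). Your write-up merely spells out in more detail (via (\ref{circsph}) and the case $v\notin\mu$) what the paper leaves implicit.
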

\begin{proof}
Lemma~\ref{lem:zetadist} and eqn.~(\ref{eq:ballref}) implies that $\kappa_{\hat k(q)}(\mu)$ is contained in $\Omega$.
The claim is an immediate consequence of equations~(\ref{refreflection}),
(\ref{refMinv}), (\ref{zetainv}) and (\ref{refparallel}).
\end{proof}

The concept of label paths allows us to partition the set of pairs
into disjoint subsets according to the distance.
\begin{defn}[Regular pairs and regular label paths]\label{defn:kbonds}
For $\lambda \in \Lambda$ the set of \amend{29}{regular} pairs with distance
$\lambda$ is given by
\begin{align*}
 P_*(\lambda) =
\biggl\{ p \in P \; : \; &\exists \gamma \in \hat \Gamma_*(\lambda) \; : \;
p = (\gamma(0),\gamma(\nu))
\text{ and } \dist(\{y(p_1),y(p_2)\}, y(\partial X)) \geq 10 \lambda\biggr\}
\end{align*}
and $P_*(1)=\nnb$.

The extended set of pairs is $P(\lambda) = P_*(\lambda)$ if $\lambda \in \Lambda \setminus
\Lambda_\mathrm{med}$ and
\begin{align} \label{medrpair}
 P(\lambda) =\biggl\{ p \in P \; : \;&
 \exists x \in X^2_\mathrm{reg} \text{ such that } p_1,p_2 \in N(x) \text{ and }
|\Phi^{-1}(p_2)-\Phi^{-1}(p_1)| = \lambda \biggr\}
\end{align}
if $\lambda \in \Lambda_\mathrm{med}$.

For a regular pair $p \in P_*(\lambda)$ we define $\lambda(p) = \lambda$.

The set of defect pairs is defined as
$$P_0:=P\backslash\cup_{\lambda\in\Lambda}P(\lambda).$$
The regular label paths are defined by the requirement that the end points
form regular pairs:
$$ \hat \Gamma(\lambda) = \{\gamma \in \hat \Gamma_*(\lambda) \; : \; (\gamma(0),\gamma(\nu)) \in P_*(\lambda)\},$$
$\lambda \in \Lambda$.

\lf{For a regular label path $\gamma \in \hat\Gamma_*(\lambda)$ we define $\lambda(\gamma) = \lambda$.}
\end{defn}
Equation (\ref{medrpair}) allows us to quantify the surplus energy generated by
the parts of the configuration with hcp structure, cf dichotomy~(\ref{labdecomp}).
Note that only $\hat \Gamma_*(\lambda)$, but not $\hat \Gamma(\lambda)$ is invariant under the action of the reflection map $\hat \kappa$.
\subsection{Cardinality of the sets $P(\lambda)$}
The analysis of the energy of a configuration requires a quantitative link between the label sets of Definition~\ref{defn:regular} and the pair sets $P(\lambda)$. This link is provided by Lemma~\ref{lemma:cardinality}.
\begin{prop} \label{Mintrin}
\begin{enumerate}
The pairs $P(\lambda)$ and $P_*(\lambda)$ have the following properties:
\item \label{setdis}
 $P_*(\lambda_1) \cap P_*(\lambda_2)=\emptyset$ if $\lambda_1,\lambda_2 \in \Lambda$ and
$P(\lambda_1)\cap P(\lambda_2) = \emptyset$ if $\amend{30}{\lambda_1,\lambda_2\in\Lambda_\mathrm{med}}$, $\lambda_1 \neq \lambda_2$.
\item \label{defex}
If $p \in P_*(\lambda)$, then
\begin{align} \label{pathnumb}
 \sum_{\atop{\gamma \in \hat \Gamma(\lambda)}{(\gamma(0),\gamma(\nu)) = p}}\hat M(\gamma) = 1.
\end{align}
\end{enumerate}
\end{prop}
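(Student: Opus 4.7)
Both statements rest on combining the discreteness of the fcc distance set $\Lambda$ with the $L^\infty$ rigidity bound~(\ref{eq:distortionineq}) and the combinatorial identity~(\ref{normalization}). Intuitively, (\ref{eq:distortionineq}) forces $|y(p_2)-y(p_1)|$ to nearly equal the reference distance $\lambda$; discreteness then pins $\lambda$ down uniquely (Claim~\ref{setdis}); and once a reference configuration around $p$ has been built that is large enough to realize every admissible path from $\Phi^{-1}(p_1)$ to $\Phi^{-1}(p_2)$, Claim~\ref{defex} reduces to~(\ref{normalization}).

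For Claim~\ref{setdis}, take $p \in P_*(\lambda)$ together with its witnessing reference configuration $(\Omega,\Phi,u)$ and reference path $\mu\in\Gamma(\lambda)$ with $\Phi(\mu(0))=p_1$, $\Phi(\mu(\nu))=p_2$. Lemma~\ref{lem:zetadist} and condition~(\ref{eq:ballref}) place $\mu$ inside $\Omega$, so (\ref{eq:distortionineq}) gives
$$\bigl||y(p_2)-y(p_1)|-\lambda\bigr|\leq C\alpha\,\lambda.$$
If $p\in P_*(\lambda_1)\cap P_*(\lambda_2)$, the two such inequalities imply $|\lambda_1-\lambda_2|\leq C\alpha(\lambda_1+\lambda_2)$, which contradicts the lower bound $|\lambda_1^2-\lambda_2^2|\geq 1$ (squared fcc distances are integers) once $\alpha$ is small in terms of the a priori bound on $\lambda_j$ supplied by the diameter of the scaled octahedra. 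For the medium case the same argument is run with the quasi-rigidity~(\ref{eq:qdist}) from Proposition~\ref{prop:maxnhd} instead of (\ref{eq:distortionineq}); the three values $\sqrt 2,\sqrt{8/3},\sqrt 3$ are manifestly separated, so $\varepsilon(\alpha)$ small suffices.

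For Claim~\ref{defex}, fix $p\in P_*(\lambda)$ and apply Proposition~\ref{prop:refconfig} at $x=p_1$ with radius $r=4\lambda$: the standoff hypothesis $\dist(\{y(p_1)\},y(\partial X))\geq 10\lambda$ provides the required $2r+3$ clearance for all but the smallest $\lambda$ (the case $\lambda=1$, where $\Gamma(1)$ consists of single edges and~(\ref{pathnumb}) is trivial, as well as $\lambda\in\{\sqrt 2,\sqrt 3\}$, are handled with ad hoc $r$). This yields a scaled octahedron $(\Omega,\Phi,u)$ with $B(\Phi^{-1}(p_1),4\lambda)\subset\Omega$. Put $k=\Phi^{-1}(p_2)-\Phi^{-1}(p_1)\in\cfcc$; by (\ref{eq:distortionineq}) and discreteness, $|k|=\lambda$ exactly. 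Every $\mu\in\Gamma(\lambda)$ with $\mu(0)=\Phi^{-1}(p_1)$ and $\mu(\nu)=\Phi^{-1}(p_2)$ satisfies $B(\zeta(\mu),2\lambda)\subset B(\Phi^{-1}(p_1),4\lambda)\subset\Omega$ by Lemma~\ref{lem:zetadist}; hence $\gamma:=\Phi\circ\mu\in\hat\Gamma_*(\lambda)$, and since $(\gamma(0),\gamma(\nu))=p\in P_*(\lambda)$ in fact $\gamma\in\hat\Gamma(\lambda)$ with $\hat M(\gamma)=M(\mu)$. Injectivity of $\Phi$, inherited from~(\ref{eq:distortionineq}), makes $\mu\mapsto\gamma$ injective; choosing the (by convention unique) reference configuration attached to each such $\gamma$ to be this enlarged $(\Omega,\Phi,u)$ makes it surjective, and~(\ref{pathnumb}) then follows from~(\ref{normalization}).

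\emph{Main obstacle.} The delicate step is the surjectivity in Claim~\ref{defex}: one must verify that every $\gamma\in\hat\Gamma(\lambda)$ with endpoints $p$ is realized by some $\mu\in\Gamma(\lambda)$ relative to the single enlarged reference configuration, rather than only via some smaller competitor configuration. This is where the paper's convention that each label path carries a unique reference configuration must be selected compatibly with our construction, and where the compatibility of local imbeddings (Proposition~\ref{prop:discretelocal}), together with the invariance of the admissibility condition of Definition~\ref{defn:refpaths} under the point group of $\cfcc$, is needed to identify any two reference constructions up to a lattice symmetry. The other bookkeeping items---the $\lambda=1$ and $\lambda\in\Lambda_\mathrm{med}$ boundary cases and the a priori bound on $\lambda_j$ used in Claim~\ref{setdis}---are routine.
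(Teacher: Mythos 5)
Your argument for Claim~\ref{setdis} fails in the case $p \in P_*(\lambda_1)\cap P_*(\lambda_2)$ with $\lambda_1,\lambda_2 \in \Lambda$. The rigidity bound~(\ref{eq:distortionineq}) gives a multiplicative error: $\bigl||y(p_2)-y(p_1)|-\lambda_i\bigr|\leq C\alpha\lambda_i$, which yields at best $|\lambda_1^2-\lambda_2^2|\leq C\alpha(\lambda_1+\lambda_2)^2$. To contradict $|\lambda_1^2-\lambda_2^2|\geq 1$ you would therefore need $\alpha$ smaller than a constant times $(\lambda_1+\lambda_2)^{-2}$, i.e.\ an $\alpha$-threshold that shrinks as $\lambda$ grows. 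But $\Lambda$ is unbounded, $P_*(\lambda)$ is nonempty for arbitrarily large $\lambda$ once the configuration is large enough, and the ``a priori bound on $\lambda_j$ supplied by the diameter of the scaled octahedra'' you invoke does not exist: those octahedra scale with $\lambda$, whereas $\alpha_0$ in the proposition must be independent of $\#X$ and of $\lambda$. The paper avoids the perturbative route entirely and proves the \emph{exact} identity $\lambda_1=\lambda_2$: after translating so that $\Phi_i(0)=p_1$, it builds a single large reference configuration $(\Omega_0,\Phi_0,u_0)$ via Proposition~\ref{prop:refconfig} and applies Proposition~\ref{prop:discretelocal} inductively to obtain rotations $T_i$ with $u_0(\eta)=u_i(T_i\eta)$ on $B(0,2\lambda_2)$; then $\lambda_1=|\Phi_1^{-1}(p_2)|=|\Phi_0^{-1}(p_2)|=|\Phi_2^{-1}(p_2)|=\lambda_2$, with no smallness condition on $\alpha$ relative to $\lambda$. (Your treatment of the medium-range case over the finite set $\Lambda_\mathrm{med}$ via~(\ref{eq:qdist}) is fine.)

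For Claim~\ref{defex} your construction of an enlarged reference configuration and the reduction to~(\ref{normalization}) match the paper's lower bound, and the surjectivity obstacle you flag is precisely the substantive point. The paper resolves it with the same identification mechanism as above: any competing reference configuration $(\Omega,\Phi,u)$ realizing a label path with endpoints $p$ can, by Proposition~\ref{prop:discretelocal}, be rotated and translated onto $(\Omega_0,\Phi_0,u_0)$, which induces a bijection between the corresponding families of label paths and gives the matching upper bound. Since you only indicate where that argument should go rather than carrying it out, and since your Claim~\ref{setdis} argument breaks down for large $\lambda$, the proposal is not yet a correct proof.
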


\begin{proof}
Assume first that $p \in P(\lambda)$ for some $\lambda \in \Lambda_\mathrm{med}$.
Proposition~\ref{prop:maxnhd} implies that
\begin{align}
\label{lamcomp}
||y(p_1)-y(p_2)| -\lambda| \leq \eps\lf(\alpha).
\end{align}
For sufficiently small $\lf{\alpha}$ there is just one $\lambda \in \Lambda_\mathrm{med}$ which
satisfies (\ref{lamcomp}) because $\Lambda_\mathrm{med}$ is a finite set.

Assume next that $\lambda_1\leq \lambda_2$, $p \in P_*(\lambda_1)\cap P_*(\lambda_2)$ and let $(\Omega_i,\Phi_i,u_i)$, $i=1,2$ be the associated reference configurations.
Estimate (\ref{eq:distortionineq}) implies that $\lambda_2-\lambda_1 \leq C \alpha$.
After translation we can assume that
$\Phi_i(0) = p_1$. Proposition~\ref{prop:refconfig} implies that there exists a translated reference configuration $(\Omega_0,\Phi_0,u_0)$ such that $p_1 = \Phi_0(0)$ and $B(0,2\lambda_2)\subset \Omega_0$.
Proposition~\ref{prop:discretelocal} implies the existence of rotations $T_i$
with the property
$u_i(\eta) = u_0(T_i \eta)$ for all $\eta \in B(0,1)$, $i=1,2$.
Inductively one obtains that
\begin{align} \label{mapseq}
u_0(\eta)= u_i(T_i\eta) \text{ for all } |\eta| \leq 2\lambda_2, i=1,2.
\end{align}
The rigidity bound (\ref{eq:distortionineq}) implies that
$|\Phi_i^{-1}(p_2)| \leq \frac{3}{2}\,\lambda_2$ and together with (\ref{mapseq})
one finds
\begin{align*}
 \lambda_1= |\Phi_1^{-1}(p_2)| = |\Phi_2^{-1}(p_2)|= \lambda_2,
\end{align*}
which is claim~\ref{setdis}.

Proof of claim \ref{defex}.
Let $(\Omega_0,\Phi_0,u_0)$ the the reference configuration which is defined above and
let $\mu \in \Gamma(\lambda)$ be a reference path such that $\mu(0)=0$, $\mu(\nu)=\eta$.
Then $\gamma = \Phi_0\circ \mu \in \hat \Gamma(\lambda)$ and $(\gamma(0),\gamma(\nu))=p$,
this implies the lower bound
\begin{align*}
\sum_{\atop{\gamma \in \hat \Gamma(\lambda)}{(\gamma(0),\gamma(\nu))=p}} \hat M(\gamma) \geq
\sum_{\atop{\mu \in \Gamma(\lambda)}{(\mu(0),\mu(\nu))=\eta}} M(\mu)\stackrel{(\ref{normalization})}{=} 1.
\end{align*}
The first inequality holds because there might be label paths that require a different
reference configuration.
To prove the corresponding upper bound we assume that
$(\Omega,\Phi,u)$ is another reference configuration and
$\mu$ is a reference path such that $(\Phi\circ \mu(0),\Phi\circ \mu(\nu))=p$.
Using the same argument as in the proof of claim~\ref{setdis} one can rotate and
translate $(\Omega,\Phi,u)$. This induces a one-to-one relationship between the
label paths induced by $(\Omega,\Phi,u)$ and $(\Omega_0,\Phi_0,u_0)$ and thereby
claim~\ref{defex}.
\end{proof}

Finally we provide a quantitative link between path sets and
the label sets introduced in Definition~\ref{defn:regular}.
\begin{lemma}\label{lemma:cardinality}
There exists constants $C,\alpha_0\geq 0$ such that for every
$\alpha\in(0,\alpha_0),$ every configuration $y:X\rightarrow\re^3$
satisfying the minimum distance bound (\ref{eq:mindist}) and every
$\lambda\in\Lambda_\mathrm{long},$ the estimate
\begin{align}\label{eq:cardinality}
0\leq\frac{m(\lambda)}{m(1)}\# \nnb -\# P(\lambda)
\leq C\lambda^3\#\partial X
\end{align}
holds. Let $n=\#X$. Then the short- and medium-range pairs
satisfy the bounds
\begin{align}
& (n-\# X_{12}) \leq m(1)\,n -\# \nnb \leq m(1)(n-\# X_{12}),\label{nnbound} \\
&\lf{\#(\Xreg \setminus \Xreg^2)} \leq 12 (n-\#\Xreg), \label{eq:X2}\\
&\left|m(\sqrt{2})\,n -\# P(\sqrt{2})\right| \leq C(n - \# \Xreg),
\label{snbound}\\
&\left|\# P(\sqrt{8/3}) -2 \# \Xtcub^2\right|\leq C(n - \# \Xreg),
\label{s8bound}\\
&0 \leq
\# P(\sqrt{3})-  m(\sqrt{3})\, \# \Xcub^2 - 18 \# \Xtcub^2
\leq C(n- \# \Xreg). \label{s3bound}
\end{align}
\end{lemma}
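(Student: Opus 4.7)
The plan is to prove each inequality by a double-counting argument: for each distance $\lambda$, compare the pair count realized by the configuration $y(X)$ with the count expected from a perfect fcc lattice of the same cardinality, and absorb the discrepancy into either $\#\partial X$ (long-range) or $n-\#\Xreg$ (short- and medium-range). The essential ingredients are Propositions~\ref{prop:maxnhd}, \ref{prop:refconfig}, \ref{prop:locref} and \ref{Mintrin}, together with the minimum-distance bound of Proposition~\ref{prop:mindist}.

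For (\ref{eq:cardinality}), fix $\lambda\in\Lambda_{\mathrm{long}}$ and, for a sufficiently large universal constant $K$, introduce the deep-interior set
$$X^\lambda := \bigl\{x\in X\;:\;\dist(y(x),y(\partial X))\geq K\lambda\bigr\}.$$
The constant $K$ is to be chosen so that, for every $x\in X^\lambda$, the hypotheses of Proposition~\ref{prop:refconfig} are satisfied and the resulting reference configuration $(\Omega,\Phi,u)$ is large enough both to host every reference path $\mu\in\Gamma(\lambda)$ starting at $\Phi^{-1}(x)$ (cf.~the ball condition (\ref{eq:ballref}) of Definition~\ref{def:labpath}) and to ensure the $10\lambda$ endpoint condition defining $P_*(\lambda)$. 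The minimum-distance bound of Proposition~\ref{prop:mindist} then forces every Euclidean ball of radius $K\lambda$ to contain at most $C\lambda^3$ labels, so $\#(X\setminus X^\lambda)\leq C\lambda^3\#\partial X$. For each $x\in X^\lambda$ and each $\eta\in\cfcc$ with $|\eta-\Phi^{-1}(x)|=\lambda$, any admissible path $\mu\in\Gamma(\lambda)$ joining $\Phi^{-1}(x)$ to $\eta$ produces a label path $\Phi\circ\mu\in\hat\Gamma_*(\lambda)$ and hence a pair $(x,\Phi(\eta))\in P(\lambda)$. The normalization (\ref{pathnumb}) of Proposition~\ref{Mintrin}.\ref{defex} prevents double-counting of paths per pair, and Proposition~\ref{Mintrin}.\ref{setdis} prevents double-counting across distinct $\lambda$'s, so $\#P(\lambda)\geq m(\lambda)\#X^\lambda\geq m(\lambda)(n-C\lambda^3\#\partial X)$. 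Combined with $\#\nnb\leq m(1)n$, immediate from $\#N(x)\leq 12$, this gives the upper half of (\ref{eq:cardinality}). Nonnegativity follows from the inclusions $X^\lambda\subset\Xcub\subset X_{12}$, which yield $\#\nnb\geq m(1)\#X^\lambda$, together with the elementary bound $\#\{x'\;:\;(x,x')\in P(\lambda)\}\leq m(\lambda)$ obtained by identifying candidate partners with lattice points in the reference configuration.

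For the remaining bounds, (\ref{nnbound}) is immediate from $\#\nnb=\sum_{x\in X}\#N(x)$, $\#N(x)=12$ on $X_{12}$, and $1\leq 12-\#N(x)\leq 12$ on $X\setminus X_{12}$. For (\ref{eq:X2}), observe that $\Xreg\setminus\Xreg^2=\{x\in\Xreg:N(x)\not\subset\Xreg\}$ and each $x'\in X\setminus\Xreg$ lies in at most $\#N(x')\leq 12$ such neighborhoods, so summing gives the estimate. The medium-range inequalities (\ref{snbound})--(\ref{s3bound}) rest on Proposition~\ref{prop:locref}: for each $x\in\Xreg^2$ the local reference configuration identifies $\{x\}\cup N(x)\cup N^2(x)$ with a rigid copy of a piece of either $\cfcc$ (when $x\in\Xcub^2$) or $\chcp$ (when $x\in\Xtcub^2$), so the number of ordered vertex pairs in $N(x)\times N(x)$ at each distance $\lambda\in\Lambda_{\mathrm{med}}$ is a fixed integer determined by whether $x\in\Xcub^2$ or $x\in\Xtcub^2$. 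A direct enumeration based on the contact-graph geometry of Section~\ref{sec:locallattice}, together with the observation that every medium-range pair is witnessed by a fixed, lattice-dependent number of common first-neighborhoods, yields the main terms $m(\sqrt{2})n$, $2\#\Xtcub^2$ and $m(\sqrt{3})\#\Xcub^2+18\#\Xtcub^2$; the errors of order $n-\#\Xreg$ absorb pairs incident to labels whose full second-neighborhood is unavailable.

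The principal obstacle is the coherence of reference configurations in the long-range step: two reference configurations built at distinct deep interior labels must assign the \emph{same} admissible distance $\lambda$ to a given geometric pair, for otherwise the lower bound $\#P(\lambda)\geq m(\lambda)\#X^\lambda$ would overcount. This is precisely the content of Proposition~\ref{Mintrin}.\ref{setdis}, whose proof rests on the rigidity estimates of Section~\ref{sec:rigidity}. A secondary difficulty is the combinatorial case analysis of $\cub$ versus $\tcub$ behind (\ref{snbound})--(\ref{s3bound}), which requires explicit vertex-distance tables extracted from the two contact graphs.
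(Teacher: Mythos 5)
Your proposal is correct and follows essentially the same line of argument as the paper: away from defects each label supports the full count $m(\lambda)$ of partners via reference configurations, the exceptional set near defects is of size $O(\lambda^3\#\partial X)$ by the minimum-distance bound, and the medium-range counts reduce to a weighted double-count over common first-neighborhoods with errors controlled by $n-\#\Xreg$. The paper differs only in presentation---it organizes the long-range bound through the pointwise inequality $s(x,\lambda)\leq\tfrac{m(\lambda)}{m(1)}s(x,1)$ rather than a deep-interior set $X^\lambda$, writes the weighted count explicitly via $a(p)=\#\{x\in\Xreg^2:p\subset N(x)\}$ for $\lambda\in\{\sqrt2,\sqrt{8/3}\}$, and uses a separate equator-based identity for $\lambda=\sqrt3$---so your sketch is a sound outline of the same proof.
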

The proof can be found in the appendix.
\section{Proof of Theorem~\ref{thm:mainintro}}
\label{sec:energy}
\begin{proof}
We adopt the notation (\ref{eq:vectnot}).
Following the remarks in Section~\ref{thm:mainintro} it suffices to establish the lower bound (\ref{lb}) which is a consequence of (\ref{finebound}).

Recall the definitions of $P(\lambda)$ and $P_*(\lambda)$ in Def.~\ref{defn:kbonds}.
The interaction energy
(\ref{eq:energy}) is written as the sum of structural, elastic and defect contributions:
\begin{align*}
  E(y)= E_\mathrm{struct}(y)+ E_\mathrm{elast}(y)  + E_\mathrm{defect}(y)
\end{align*}
where
\begin{align*}
E_\mathrm{struct}(y) &= \sum_{\lambda \in \Lambda} \sum_{p \in P(\lambda)} V(\lambda)+ \sum_{x \in X} e_3(x),\\
E_\mathrm{elast}(y) &= \sum_{\lambda \in \Lambda} \sum_{p \in P(\lambda)}
(V(|y(p_1)-y(p_2)|)-V(\lambda)),\\
E_\mathrm{defect}(y) &=\sum_{p\in P_0}V(|y(p_1)-y(p_2)|),
\end{align*}
and the associated three-body energy $e_3(x)$ is
defined by
\begin{eqnarray}\label{eq:e3defn}
  e_3(x):=2\sum_{\{x_1,x_2\}\subset  X\setminus\{x\}}V_3(y(x),y(x_1),y(x_2)).
\end{eqnarray}
\amend{31}{Let $n=\# X.$}  The aim of this section is to prove that the defect-free energy can be bounded
from below by the reference energy, together with contributions from bond
distortions and surface terms:
\begin{align}
\label{structbd} E_\mathrm{struct}(y) &\geq n \,\estar + c\, (n- \# X_\mathrm{reg})+ c\,\alpha^\frac{1}{2} \# \partial X, \\
\label{defectbd} E_\mathrm{defect}(y) & \geq C\,\alpha^\frac{1}{4}(\# X_\mathrm{reg}-n) - C \alpha\#\partial X,\\
\label{elastbd} E_\mathrm{elast}(y) &
\geq c\,\sum_{q\in \nnb}\left|\left|y(q_2)-y(q_1)\right|-1\right|^2
-C \alpha \#\partial X.
\end{align}
Estimates (\ref{structbd}), (\ref{defectbd}) and (\ref{elastbd}) together deliver the lower bound
\begin{align*}
E(y) \geq n \,\estar + c\, \alpha^\frac{1}{2}\# \partial X + c\,\sum_{q\in \nnb}
\left|\left|y(q_2)-y(q_1)\right|-1\right|^2,
\end{align*}
which is the lower bound of Theorem~\ref{thm:mainintro}.

The main challenge is the analysis of the elastic energy because it is not obvious why it should
scale like $O(\# \partial X)$ and not like $O(\# X)$. It will be shown in Section~\ref{sec:pairenergy} that (\ref{elastbd}) holds because of a cancelation argument which relies on the presence of certain reflection symmetries in the fcc lattice.

The bounds (\ref{defectbd}) and (\ref{structbd}) are considerably less involved, a
proof of the estimates is given in Sections~\ref{sec:struct} and \ref{sec:defect}.
\subsection{Bulk and surface energy} \label{sec:struct}
We show that $E_\mathrm{struct}$ can be estimated from below by a negative
bulk contribution and a positive surface energy.
\ft{Since $\Psi(r_1,r_2,r_3)=0$ if $\max_{i} r_i \geq 1+\alpha$ Proposition~\ref{prop:maxnhd}.1 implies that
\begin{align} \label{e3est}
e_3(x)\geq \begin{cases} 48\,\Psi(1,1,1) & \text{if } x \in \Xreg,\\
46\,\Psi(1,1,1) & \ft{\text{else.}}
\end{cases}
\end{align}}
By the definition of $E_\mathrm{struct}$
and assumption~(\ref{tbc}), we obtain the inequality
\begin{align*}
E_\mathrm{struct}(y)
\geq & \ft{48\,n\,\Psi(1,1,1)-2\,(n-\#\Xreg)\,\Psi(1,1,1)}
+\# \nnb\,V(1) + \# P(\sqrt{2})\, V(\sqrt{2})\\
&+\# P(\sqrt{8/3})\,V(\sqrt{8/3}) + \# P(\sqrt{3})\,V(\sqrt{3})
+\sum_{\lambda \in \Lambda_\mathrm{long}}
\#P(\lambda)\,V(\lambda),
\end{align*}

with $\nnb$ and $P$ defined in (\ref{eq:sdefn})
and Definition~\ref{defn:kbonds}.
The bounds in Lemma~\ref{lemma:cardinality} together with the relation $\#X_\mathrm{tco} +
\# X_\mathrm{co} = \# X_\mathrm{reg}$ \amend{32}{and the assumptions on $V$ and $\Psi$} imply that
\begin{align*}
E_\mathrm{struct}(y)
\geq & (48\,n-2(n-\#\Xreg))\,\Psi(1,1,1)+(m(1)\,n - (n-\# X_{12}))\,V(1)\\
&+ n\,m(\sqrt{2}) \,V(\sqrt{2})
 +\lf{2\# \Xtcub^2\, V(\sqrt{8/3}) + \left(m(\sqrt{3})\,\#\Xcub^2  + 18\,\#\Xtcub^2\right)\,V(\sqrt{3})}\\
& \lf{-C(n - \# X_\mathrm{reg})\alpha^{\frac{1}{4}}+ \sum_{\lambda >\sqrt{3}} m(\lambda)\,\left(nV(\lambda)
-C\alpha\,\# \partial X\,\lambda^{-10}\right)}
\\
\geq
&\estar\, n -\lf{2(n-\#\Xreg)\,\Psi(1,1,1)}+V(1)\,(\# X_{12}-n) \nonumber
+2\underbrace{\left(V(\sqrt{8/3})-3\,V(\sqrt{3})\right)}_{\geq \alpha^\frac{1}{2}} \#X_\mathrm{tco}^2\\
&+\lf{m(\sqrt 3)(n-\#\Xreg^2)V(\sqrt 3)}-C\,\alpha^\frac{1}{4}\,(n - \# X_\mathrm{reg})
-C\alpha\#\partial X
&\end{align*}

The assumptions on $V$ in Definition~\ref{defn:adpot} togehter with \eqref{eq:X2} and \eqref{s3bound} imply that

\begin{align*}
E_\mathrm{struct}(y) \geq & \estar\,n +\left(-V(1)-C\,\alpha^\frac{1}{4}-\lf{C\alpha^{\half}}\right)(n-\#X_{12})\\
&+\left(-\Psi(1,1,1)-C\,\alpha^\frac{1}{4}-\lf{C\alpha^{\half}}\right)\,(X_{12}-\# X_\mathrm{reg})
+c\alpha^\frac{1}{2}\lf{\# X_\mathrm{tco}} -C\alpha \#\partial X,
\end{align*}
which implies eqn.~(\ref{structbd}) if $\alpha$ is sufficiently small.
\subsection{Defect energy}\label{sec:defect}
The defect energy can be decomposed into several parts: interaction energies of pairs close to particles in $X\setminus X_\mathrm{reg}$, and contributions of pairs near $\partial X$:
\begin{align*}
E_\mathrm{defect}(y)= \sum_{k=1}^\infty\sum_{i=1}^3\sum_{p \in P_{0,k,i}} V(|y(p_2)-y(p_1)|)
\end{align*}
with
\begin{align*}
P_{0,k,1} &= \left\{p \in P_{0,k} \;: \; \exists x_p \in X \setminus X_\mathrm{reg} \text{ such that } \left|y(x_p)-y(p_1)\right| \leq 10(k+1)+3\right\},\\
P_{0,k,2} &= \left\{p \in P_{0,k}\setminus P_{0,k,1} \;: \; \exists x_p \in X_\mathrm{reg} \setminus X_\mathrm{co} \text{ such that } \left|y(x_p)-y(p_1)\right| \leq 10(k+1)+3\right\},\\
P_{0,k,3} &= P_{0,k} \setminus (P_{0,k,1} \cup P_{0,k,2}),\\
P_{0,k} &=\{p \in P_0 \; : \; k \leq |y(p_1) - y(p_2)| < k+1\}. \nonumber
\end{align*}
Clearly the sets $P_{0,k,i}$ form a partition of $P_{0,k}$.
The minimum distance bound~(\ref{eq:mindist}) implies that
\begin{align}
\label{p0k1} \# P_{0,k,1} &\leq C\,k^5 \,(n-\# X_\mathrm{reg}),\\
\label{p0k2}
\# P_{0,k,2} &\leq C\,k^5 \# (X_\mathrm{reg}\setminus X_\mathrm{co})\leq C\,k^5 \# \partial X,\\
\nonumber
\# P_{0,k,3} &\leq \amend{33}{\#}\left\{p \in P_{0,k} \;: \; \exists x_p \in X_\mathrm{co} \setminus X^2_\mathrm{reg} \text{ such that } \left|y(x_p)-y(p_1)\right| \leq 10(k+1)+3\right\}\\
&\leq  C\,k^5 \# (X_\mathrm{co}\setminus X^2_\mathrm{reg})
\leq C\,k^5 \# (X_\mathrm{reg}\setminus X^2_\mathrm{reg})\leq C k^5\,(n - \# X_\mathrm{reg}).\label{p0k3}
\end{align}
The last bound is due to (\ref{eq:X2}).

Inequalities (\ref{p0k1}) and (\ref{p0k3}) together with the estimates~(\ref{assump:vzero}),
(\ref{eq:assumptwo}) deliver the bound
\begin{align} \label{sp0k1}
\sum_{k=1}^\infty\sum_{p \in P_{0,k,i}} V(|y(p_2)-y(p_1)|) \geq
-C \alpha^\frac{1}{4} (\# X_\mathrm{reg}-n)
\end{align}
for $i \in \{1,3\}$.
It can be checked by inspection that $p\in P_{0,k,2}$ and (\ref{lamblist}) implies $|y(p_2)-y(p_1)|\geq \sqrt{\frac{7}{2}}$ and thus another application of the decay estimate~(\ref{eq:assumptwo}) together with (\ref{p0k2}) delivers the bound
\begin{align}
\label{sp0k2}
\sum_{k=1}^{\infty}\sum_{p \in P_{0,k,2}} V(|y(p_2)-y(p_1)|) \geq - C \alpha \#\partial X.
\end{align}
Estimate~(\ref{defectbd}) is a result of the combination of~(\ref{sp0k1}) and (\ref{sp0k2}).
\subsection{Elastic energy}\label{sec:pairenergy}
The main challenge is the demonstration that the elastic bulk contribution is non-negative. The
core of the argument is based on a cancelation effect induced by the reflection map $\kappa$
(Section~\ref{sec:pairs}). The details of this step can be found at the end of
Section~\ref{sec:linear}.

Recall the definition of $P_*$ in Definition~\ref{defn:kbonds}.
We split the elastic energy into short and long-range
contributions by defining
\begin{align*}
E_\mathrm{elast}(y) &= E_\mathrm{short}(y)+ E_\mathrm{long}(y) + E_\mathrm{med}(y)\\
E_\mathrm{short}(y) &= \sum_{q \in \nnb} (V(|y(q_2) - y(q_1)|)-V(1)),\\
E_\mathrm{long}(y) &= \sum_{\lambda \in \Lambda \setminus \{1\}} \sum_{p \in P_*(\lambda)}
(V(|y(p_2) - y(p_1)|)-V(\lambda)),\\
E_\mathrm{med}(y) &= \sum_{\lambda \in \Lambda} \sum_{p \in P(\lambda) \setminus P_*(\lambda)} (V(|y(p_2) - y(p_1)|)-V(\lambda)).
\end{align*}
The medium-range contributions are associated with pairs contained in the
neighborhoods of regular particles, which do not form the ends of paths.
If $\lambda \in \Lambda_\mathrm{med}$ and $p \in P(\lambda)$, then
Proposition~\ref{prop:discretelocal} and (\ref{eq:distortionineq}) imply that
$|V(|y(p_2) - y(p_1)|)-V(\lambda)| \leq C \alpha,$ and thus the inequality
\begin{align*}
E_\mathrm{med}(y) &\geq -C\alpha \sum_{\lambda \in
\Lambda_\mathrm{med}} \#(P(\lambda) \setminus P_*(\lambda)).
\end{align*}
The definition of $P_*(\lambda)$ and $P(\lambda)$ together with the minimum distance bound (\ref{eq:mindist}) implies that
$\# (P(\lambda)\setminus P_*(\lambda)) \leq C\# \partial X$ and one finds that
\begin{align*}
E_\mathrm{med}(y) \geq -C\alpha \#\partial X.
\end{align*}


To simplify the analysis of $E_\mathrm{elast}$ we apply eqn.~(\ref{pathnumb}) and write
the long-range energy contributions as a sum over paths:
\begin{align} \label{elastpath}
E_\mathrm{elast}(y)\geq E_\mathrm{short}(y)+\sum_{\lambda\in \Lambda\setminus\{1\}}
\sum_{\gamma \in  \hat \Gamma(\lambda)}
\hat M(\gamma)(V(|y(\gamma(\nu)) - y(\gamma(0))|)-V(\lambda))-C\alpha\#\partial X.
\end{align}
Let $(\Omega,\Phi,u)$ be the reference configuration associated with $\gamma$
\amend{34}{and let} $R \in SO(3)$ be the rotation which achieves the minimum in
(\ref{fjmineq}) for $s=2$. Note that $R$ and $\Omega$ depend on $\gamma$;
as it will turn out that this this dependency is irrelevant for the bounds,
we will mostly suppress it in our notation.
For the remainder of this section the shorthand $\eta_1=\Phi^{-1}(\gamma(0))$
and $\eta_2=\Phi^{-1}(\gamma(\nu))$ will be used. Expanding $V(|u(\eta_2) - u(\eta_1)|)$ one obtains
\begin{align}
& V(|u(\eta_2)-u(\eta_1)|) - V(\lambda) \nonumber\\
= &V'(\lambda)(|u(\eta_2)-u(\eta_1)|-\lambda) +
\frac{1}{2} V''(r(\gamma))(|u(\eta_2)-u(\eta_1)|-\lambda)^2,\label{Vexp}
\end{align}
where $r(\gamma)>0$ satisfies
\begin{eqnarray}\label{eq:rp}
\left|r(\gamma)-\lambda\right|\leq\left|\left|u(\eta_2)
-u(\eta_1)\right|-\lambda\right|\leq C\,\alpha\,\lambda
\end{eqnarray}
for a universal constant $C>0,$ provided $\alpha>0$ is sufficiently small.
The final inequality is due to the rigidity bound~(\ref{eq:distortionineq}).\amend{35}{}

Next, we define the distortion
\begin{eqnarray}\label{eq:deltadefn}
 \delta(\gamma)=R^T\,(u(\eta_2)-u(\eta_1))-\hat k(\gamma)\in\re^3.
\end{eqnarray}

By Proposition \ref{prop:distsum}, the distortion can be bounded by a sum of
edge length distortions in the following way:
\begin{align} \nonumber
 \left|\delta(\gamma)\right|^2\amend{36}{\leq}&
\left(\sum_{i=1}^\nu\left|\nabla u(\xi_i) (\mu(i)-\mu(i-1)))
-R\,\hat k(\gamma)\right|\right)^2\\
\leq&
C\amend{36}{\lambda^2}\sum_{q \in \nnb(\Omega)}\left|\left|y(q_2)-y(q_1)\right|-1\right|^2,
\label{eq:deltaest}
\end{align}
where the points $\xi_i\in \mathrm{interior}(\conv(\sigma_i))$ are arbitrary
and each simplex $\sigma_i \in \mathcal D$ has the property $\mu(i-1),\mu(i) \in \sigma_i$. The right-hand side of
(\ref{eq:deltaest}) is clearly an overestimation of the distortion, in which
the sum of edge length distortions contains an order of $\lambda^3$ terms.
An estimate of this form is sufficient because of the strong decay (\ref{assump:vfive}).

We now expand each term in the right-hand side of (\ref{Vexp}):
\begin{align*} 
 \left|u(\eta_2)-u(\eta_1)\right|=\lambda+
\frac{\delta(\gamma)\cdot \hat k(\gamma)}{\lambda}+e(\gamma)
\end{align*}
where $e(\gamma)\in\re$ is the second-order remainder term, satisfying
\begin{align} \label{remest}
\left|e(\gamma)\right|\leq C\left|\delta(\gamma)\right|^2,
\end{align}
for a universal constant $C>0.$ \amend{37}A second order
expansion of the energy contribution of $\gamma$ then takes the form
\begin{align*}
  V(\left|u(\eta_2)-u(\eta_1)\right|)=V(\lambda)
+\left(\frac{\delta(\gamma) \cdot \hat k(\gamma) }{\lambda}
+e(\gamma)\right)V'(\lambda)+\half\left(\frac{\delta(\gamma)\cdot\hat k(\gamma)}{\lambda}
+e(\gamma)\right)^2V''(r(\gamma)).
\end{align*}

If $\lambda\in\left\{\sqrt{2},\sqrt{3}\right\},$ then (\ref{eq:rp}) and
assumption (\ref{assump:vfourprimetwo}) on the pair potential $V$ implies that
$\left|V''(r(p))\right|\leq\alpha^\frac{1}{4}.$

With this notation eqn.~(\ref{elastpath}) takes the form
\begin{align}\label{eq:pairTaylor}
E_\mathrm{elast}(y)\geq \mathcal{R}+\g - C\alpha \#\partial X,
\end{align}
where
\begin{align} \label{eq:r}
\mathcal{R} &=\sum_{\lambda\in \Lambda \setminus\{1\}} \sum_{\gamma \in \hat\Gamma(\lambda)}\hat M(\gamma) \left[ e(\gamma)\, V'(\lambda)
+\half\left(\frac{\delta(\gamma)\cdot \hat k(\gamma)}{\lambda}+e(\gamma)\right)^2
V''(r(\gamma))\right],\\
\label{eq:g}
 \g &=E_\mathrm{short}(y)+\sum_{\lambda \in \Lambda\setminus\{1\}} \sum_{\gamma \in \hat \Gamma(\lambda)} \hat M(\gamma)
\frac{\delta(\gamma)\cdot\hat k(\gamma)}{\lambda}V'(\lambda).
\end{align}
We will demonstrate that
\begin{align} \label{eq:epsilon}
\mathcal{R} &\geq -C\alpha^\frac{1}{4}\sum_{q\in \nnb}\left|\left|y(q_2)-y(q_1)\right|-1\right|^2,\\
\g & \geq c\sum_{q\in \nnb}\left|\left|y(q_2)-y(q_1)\right|-1\right|^2- C\alpha\#\partial X. \label{gbound}
\end{align}
Combining (\ref{eq:pairTaylor}), (\ref{eq:epsilon}) and (\ref{gbound}) gives (\ref{elastbd}).

\subsubsection{Proof of inequality~(\ref{eq:epsilon})}\label{sec:second}
We use the rigidity estimate (\ref{eq:distortionineq}) and inequality (\ref{eq:deltaest})
to reduce $\mathcal{R}$ to a localized quadratic
sum of edge distortions.

Taking the modulus of each term in (\ref{eq:r}) and using (\ref{remest}), \amend{38}{\eqref{assump:vfive} and Lemma~\ref{lemma:assumptions}} we find
\begin{align*}
\left|\r\right| \leq
&C\alpha^\frac{1}{4}\sum_{\lambda\in\Lambda\setminus\{1\}}\sum_{\gamma\in\hat\Gamma(\lambda)}\hat M(\gamma)
\left(\left|\delta(\gamma)\right|^2\lambda^{-9}+\sum_{i=2}^4
\left|\delta(\gamma)\right|^i\lambda^{-10}\right).
\end{align*}
Recall the definition of the set $\nnb(\Omega)$ in (\ref{eq:sonedomain}).
Estimate (\ref{eq:distortionineq}) implies that $|\delta(\gamma)|\leq
C\lambda$ and together with (\ref{eq:deltaest}) one finds that
\begin{align*}
\left|\r\right| \leq&C\alpha^\frac{1}{4}\sum_{q\in \nnb}
\left|\left|y(q_2)-y(q_1)\right|-1\right|^2
\sum_{\lambda\in\Lambda}\amend{39}{\lambda^{-6}}\sum_{\gamma \in \hat \Gamma(\lambda)}\hat M(\gamma)
\chi_{\nnb(\O)}(q) \\
\leq  & C\alpha^\frac{1}{4}\sum_{q\in \nnb}
\left|\left|y(q_2)-y(q_1)\right|-1\right|^2
\sum_{\lambda\in\Lambda}\amend{39}{\lambda^{-6}} \sum_{p \in P(\lambda)}
\underbrace{\sum_{\atop{\gamma \in \hat \Gamma(\lambda)}
{p = (\gamma(0),\gamma(\nu))}} \hat M(\gamma)}_{=1}\\
& \qquad\qquad \times \max\{\chi_{\nnb(\O(\gamma))}(q) \; | \;
p = (\gamma(0),\gamma(\nu))\} \\
\leq&C\alpha^\frac{1}{4}\sum_{q\in \nnb}\left|\left|y(q_2)-y(q_1)\right|-1\right|^2,
\end{align*}
which is (\ref{eq:epsilon}).
The final estimate is due to the injectivity of each
discrete imbedding, which implies the combinatorial estimate
\begin{eqnarray}\label{eq:ballinverse}
  \#\left\{p\in P(\lambda)
    \;:\; q\in \nnb(\O(\gamma)) \text{ for some } \gamma\in \Gamma(\lambda)
    \text{ such that } p=(\gamma(0),\gamma(\nu)) \right\}\leq C m(\lambda) \lambda^3
\end{eqnarray}
for each $\lambda\in\Lambda$ and $q\in \nnb.$
\subsubsection{Proof of inequality~(\ref{gbound})}\label{sec:linear}
The aim of this section is to prove
\begin{align}
  \g =&E_\mathrm{short}+\sum_{\lambda \in \Lambda\setminus \{1\}} \sum_{\gamma \in \hat \Gamma(\lambda)} \hat M(\gamma)
\frac{\delta(\gamma)\cdot\hat k(\gamma)}{\lambda}V'(\lambda)
\geq c\sum_{q\in \nnb} \left|\left|y(q_2)-y(q_1)\right|-1\right|^2
-C\alpha\#\partial X.
\label{eq:linearbound}
\end{align}
Inequality (\ref{eq:linearbound}) is not a direct consequence of simple
estimates, since the left-hand is a linear function of pair distortions,
whilst the right-hand side contains quadratic terms.
As a first step, the sum $\g$ is written as a localized sum of nearest neighbor quantities.
For the short-range interaction it is easy to see that
$E_\mathrm{short}(y)= I_1+I_2$ with
\begin{align}
I_1 = & V'(1)\,\sum_{q \in \nnb} (|y(q_2)-y(q_1)| - 1),\\
I_2 = & \sum_{q \in \nnb} \frac{1}{2}V''(\rho_q)(|y(q_2)-y(q_1)|-1)^2
\end{align}
where $\rho_q \in [1-\alpha,1+\alpha]$.
Assumption~(\ref{assump:vtwo}) implies that
\begin{align} I_2 \geq \sum_{q \in \nnb} \frac{1}{2}(|y(q_2)-y(q_1)|-1)^2.
\label{coercivity}
\end{align}
To localize the long-range interactions each vector
$y(p_2(\gamma))-y(p_1(\gamma))$ is decomposed into a sum over edges, c.f. (\ref{eq:pathsum}).  To this end, for each $\gamma\in\glab$ and $q\in
\nnb,$ define the indicator function $g_{q}(\gamma)\in\left\{0,1\right\}$ by
\begin{eqnarray*}
 g_{q}(\gamma)&=&\left\{\begin{array}{ll}
                         1 & \textnormal{if} \ q\in\gamma,\\
0 & \textnormal{otherwise.}
                        \end{array}\right.
\end{eqnarray*}
The injectivity of $\gamma$ implies that
\begin{eqnarray}\label{eq:pathsum}
 y(p_2(\gamma))-y(p_1(\gamma))=\sum_{q\in\gamma}(y(q_2)-y(q_1))
=\sum_{q\in \nnb}g_{q}(\gamma)\,(y(q_2)-y(q_1)).
\end{eqnarray}
If $\gamma\in\glab$ and $q\in \nnb$ is such that $q\in\gamma,$ then we may choose
a fixed simplex $\sigma(q)\in\d$ with the property $q_1,q_2 \in \Phi(\sigma(q))$
and define
$F_{q}=\left.\nabla u\right|_{\sigma(q)}\in M^{3\times 3}$. Let furthermore $R_{q}\in
SO(3)$ be a minimizer of $R\mapsto \left|F_q-R\right|$
subject to the constraint $(y(q_2)-y(q_1))=r_q \,R\,\hat k(q)$,
with $r_q=\left|y(q_2)-y(q_1)\right|\in\left[1-\alpha,1+\alpha\right].$
Then we obtain
\begin{align*}
 y(p_2(\gamma))-y(p_1(\gamma))= \sum_{q \in \nnb} g_q(\gamma) r_q R_q \hat k(q).
\end{align*}

Notice that for every $R \in SO(3)$ the equation
$$\amend{40}{R^T} = \Id + \frac{1}{2}(R^T-R)-\frac{1}{2}(\Id-R^T)(\Id-R)$$
holds. This implies that
\begin{eqnarray}\label{eq:rqt}
R^T=(R^T R_{q})R^T_{q}=(\I+A_{q}+G_{q})R^T_{q},
\end{eqnarray}
where $A_{q}$ is a skew symmetric matrix and $G_{q}\in M^{3\times 3}$ satisfies
$\left|G_{q}\right|\leq C\left|R-R_{q}\right|^2.$ Thus,
\begin{align}\label{eq:jsplit}
\g&= I_2+J_1 + J_2+ J_3-J_4
\end{align}
with
\begin{align*}
  J_1 = &\sum_{q\in \nnb}\left(V'(1) r_q +\sum_{\lambda\in\Lambda\setminus\{1\}}\sum_{\gamma\in\hat \Gamma(\lambda)}
  \hat M(\gamma)\,r_q\,\lf{g_{q}(\gamma)}\, \hat k(\gamma)\cdot \hat k(q) W(\lambda)\right),\\
  J_2 = & \sum_{q\in \nnb}\sum_{\lambda\in\Lambda\setminus\{1\}}\sum_{\gamma\in\hat \Gamma(\lambda)}\hat M(\gamma)\,r_q\, \lf{g_{q}(\gamma)}\, \hat k(\gamma)\cdot A_{q}\,\hat k(q) W(\lambda),\\
  J_3 = & \sum_{q\in \nnb}\sum_{\lambda\in\Lambda\setminus\{1\}} \sum_{\gamma\in\hat \Gamma(\lambda)}\hat M(\gamma)\, r_q\, \lf{g_{q}(\gamma)}\, \hat k(\gamma)\cdot G_{q}\,\hat k(q)\,W(\lambda),\\
  J_4 = &V'(1) \# \nnb + \sum_{\lambda\in\Lambda \setminus\{1\}}\sum_{p \in P(\lambda)}\lambda^2W(\lambda),
\end{align*}
where $W(\lambda) = \frac{1}{\lambda}V'(\lambda)$. Estimate~(\ref{gbound}) is a consequence
of (\ref{coercivity}) and the inequalities
\begin{align}
\label{J1bound} J_1 \geq & -C\,\alpha\,\#\partial X,\\
\label{J2bound} J_2 \geq & -C\, \alpha\, \#\partial X,\\
\label{J3bound} J_3 \geq& -C\alpha\sum_{q\in \nnb}\left|\left|y(q_2)-y(q_1)\right|-1\right|^2,\\
\label{J4bound} J_4 \geq& -C\alpha^\frac{1}{4}(n-\# X_\mathrm{reg}) - C \alpha \# \partial X,
\end{align}
which will be established below.
\subsubsection*{Analysis of $J_1$ and $J_4$}
The bounds of the sums $J_1$ and $J_4$ are a consequence of the equilibrium
condition (\ref{eq:renormmin}).
\begin{align*}
J_4=&V'(1) \# \nnb+\sum_{\lambda\in\Lambda\setminus\{1\}}\sum_{p\in
  P(\lambda)}\lambda^2W(\lambda) \\
=&\sum_{\lambda\in\Lambda} \left(\# P(\lambda)
-\frac{m(\lambda)}{m(1)}\# \nnb\right)\lambda^2W(\lambda)
+\frac{1}{m(1)}\sum_{q\in \nnb}
\underbrace{\sum_{\lambda\in\Lambda}m(\lambda)
\lambda^2W(\lambda)}_{=0 \text{ by (\ref{eq:renormmin})}}\nonumber\\
= &\sum_{\lambda\in\Lambda}
\left(\# P(\lambda)-\frac{m(\lambda)}{m(1)}\# \nnb\right)
\,\lambda^2W(\lambda)\\
\geq&-C\,|V'(\sqrt{2})|\,(n-\# X_\mathrm{reg}) - \sqrt{3} V'(\sqrt{3})\,\left( C(n-\# X_\mathrm{reg})
-6\# X_\mathrm{tco}\right) - C\alpha\#\partial X\\
\geq &-C|V'(\sqrt{2})|\,(n-\# X_\mathrm{reg}) - C \alpha \# \partial X\amend{42}{+}6\sqrt{3}V'(\sqrt{3}) \#
X_\mathrm{tco}.
\end{align*}
The penultimate equation is a consequence of assumptions~(\ref{assump:Vlip}), (\ref{assump:vfive}), and the bounds in Lemma~\ref{lemma:cardinality}. Together with (\ref{Vpsign}) this shows that (\ref{J4bound}) holds.

Next we establish a bound on
\begin{eqnarray*}
J_1&:=&\sum_{q\in \nnb}r_q\left(V'(1)+\sum_{\lambda\in\Lambda\setminus \{1\}}\sum_{\gamma\in\glab(\lambda)}
  \hat M(\gamma) \amend{43}{g_q(\gamma)} W(\lambda)\,\hat k(\gamma)\cdot \hat k(q)\right).
\end{eqnarray*}
Notice that the uniqueness of discrete imbeddings up to rotation and
translation implies that $J_1$ is independent of the choice of the reference
configuration.

First we demonstrate that for $q \in \nnb$ and $\lambda \in \Lambda$ the inequality
\begin{align}
\label{lambdabound}
\sum_{\gamma \in \hat \glab(\lambda)} \hat M(\gamma) \, g_q(\gamma)\, \hat k(\gamma)\cdot \hat k(q)\leq m(\lambda) \lambda^2
\end{align}
holds.

Recall eqn.~(\ref{eq:basisdefn}) and the convention $v \in B$ if there exists $i\in \{1,2,3\}$ such that
$v = Be_i$.
To eliminate the localization function $g_q$ we bound the number of paths $\gamma$ with
the properties $\hat k (\gamma) = k$ and $q =(\gamma(i),\gamma(i+1))$ for some $i$ from above by
\begin{align} \label{covct}
\sum_{B \in \mathcal B} f(k,B)a_k(v,B),
\end{align}
where $v= \hat k(q)$, the integer coefficients
$a_k(v,B) \in \{0,1,2,\ldots\}$ satisfy
\begin{align*} 
 k=\sum_{w\in B}a_k(w,B)w,
\end{align*}
and $f(k,B)\geq 0$ is given by
$$f(k,B) = \#\{B' \in \mathcal B \; : \; \mu(k,B')= \mu(k,B)\}^{-1}.$$
Expression~(\ref{covct}) is only an upper bound since not every reference path $\mu$
necessarily corresponds to  a label path $\gamma$.
It is easy to see that
\begin{eqnarray}\label{eq:kbasis}
 a_k(v,B)=\left\{ \begin{array}{rl} k^TB^{-T}B^{-1}v & \text{ if } v \in B,\\
0 & \text{ else,} \end{array} \right.
\end{eqnarray}
and $f(k,B) \hat M(\gamma) = \frac{1}{120}$ if $\mu_\gamma \in \Gamma[B]$, cf.~(\ref{eq:mdefn}).

These considerations lead to the inequality
\begin{align} \label{latineq}
\sum_{\gamma \in \hat \glab(\lambda)}\hat M(\gamma) \, g_q \,\hat k(\gamma)\cdot \hat k(q)
\leq  \frac{1}{120} \sum_{|k|=\lambda}  \sum_{B \in \mathcal B} a_{k}(v,B)k\cdot v
\end{align}
for all $v \in \sx \cap \lf{\cfcc}$.
Estimate~(\ref{lambdabound}) follows now from
\begin{lemma}\label{lemma:lambda}
For all $\lambda\in\Lambda,$ $B\in\b,$ and  $v\in B$ the identity
\begin{eqnarray}\label{eq:lambda}
 \sum_{\substack{k\in\lf{\cfcc}\\ \left|k\right|=\lambda}}a_k(v,B)k\cdot v=\frac{1}{3}m(\lambda)\lambda^2
\end{eqnarray}
holds, where $a_k(v,B)$ is the coefficient defined by (\ref{eq:kbasis}) and
$m(\lambda)$ is the number of lattice vectors of length $\lambda$, c.f. (\ref{eq:mldefn}).
\end{lemma}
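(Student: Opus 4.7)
The plan is to interpret the left-hand side of \eqref{eq:lambda} as a quadratic form in the second-moment tensor of $S_\lambda := \{k \in \cfcc : |k| = \lambda\}$, and then use the full cubic symmetry of $\cfcc$ to identify this tensor as a scalar multiple of the identity.

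First I would rewrite the summand. Since $v \in B$, write $v = B e_j$ for some $j \in \{1,2,3\}$. Then formula \eqref{eq:kbasis} gives $a_k(v,B) = k^T B^{-T} B^{-1} v = k^T B^{-T} e_j = e_j^T B^{-1} k$, while $k \cdot v = k^T B e_j$. Hence
\begin{align*}
a_k(v,B)\,(k\cdot v) = (e_j^T B^{-1} k)(k^T B e_j) = e_j^T B^{-1}(k \otimes k) B e_j,
\end{align*}
and summing over $k \in S_\lambda$ yields
\begin{align*}
\sum_{|k|=\lambda} a_k(v,B)\,k\cdot v = e_j^T B^{-1} M_\lambda B e_j, \qquad M_\lambda := \sum_{k \in S_\lambda} k \otimes k.
\end{align*}

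Second, I would show that $M_\lambda = \tfrac{1}{3} m(\lambda)\lambda^2\, \Id$ using the cubic point group of $\cfcc$. The twelve nearest neighbors $\{\pm b_i, \pm(b_i - b_j)\}$ are precisely $\tfrac{1}{\sqrt 2}$ times the orbit of $(1,1,0)^T$ under the group $G \subset O(3)$ of signed coordinate permutations, so $g\,\cfcc = \cfcc$ and consequently $g S_\lambda = S_\lambda$ for every $g \in G$. Therefore
\begin{align*}
g\,M_\lambda\,g^T = \sum_{k \in S_\lambda} (gk)\otimes(gk) = M_\lambda \text{ for all } g \in G.
\end{align*}
Commutation with the three diagonal sign-changes $\mathrm{diag}(\pm 1,\pm 1, \pm 1)$ forces the off-diagonal entries of $M_\lambda$ to vanish, and commutation with the coordinate permutations forces the three diagonal entries to coincide. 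Thus $M_\lambda = c_\lambda \Id$ for some $c_\lambda \in \R$, and taking the trace gives $3 c_\lambda = \sum_{k \in S_\lambda} |k|^2 = m(\lambda)\lambda^2$.

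Substituting back,
\begin{align*}
\sum_{|k|=\lambda} a_k(v,B)\,k\cdot v = \frac{m(\lambda)\lambda^2}{3}\,e_j^T B^{-1} B e_j = \frac{m(\lambda)\lambda^2}{3},
\end{align*}
which is \eqref{eq:lambda}. There is no genuine obstacle: once the cubic invariance of $\cfcc$ is recorded (which is immediate from the explicit basis $b_1, b_2, b_3$), the entire argument is a one-line symmetry calculation followed by a trace identification. Note in particular that the right-hand side is independent of the choice of $B$ and of $v \in B$, even though the individual coefficients $a_k(v,B)$ depend on both.
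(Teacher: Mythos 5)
Your proof is correct and follows essentially the same route as the paper: rewrite the sum as $e_j^T B^{-1}\bigl(\sum_{|k|=\lambda}k\otimes k\bigr)B e_j$, show the second-moment tensor is a multiple of the identity by the point symmetry of $\cfcc$, and fix the constant by taking the trace. The only (cosmetic) difference is that you enforce isotropy via the signed coordinate permutations, whereas the paper uses the four three-fold rotations about the triangular lattice planes; both yield $\sum_{|k|=\lambda}k\otimes k=\tfrac13 m(\lambda)\lambda^2\,\Id$.
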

\begin{proof}
See appendix.
\end{proof}
Define next
$$\mathcal D(\lambda) =\left\{ q \in \nnb \; : \; \sum_{\gamma \in \hat \glab(\lambda)}
\hat M(\gamma) \, g_q\,\hat k(\gamma)\cdot \hat k(q)< m(\lambda) \lambda^2\right\}.$$

Thanks to (\ref{lambdabound}) and the trivial bound $r_q \leq 2$ we find that
\begin{align*}
|J_1| \leq \biggl|\sum_{q\in \nnb}r_q\underbrace{\sum_{\lambda\in\Lambda}W(\lambda) \lambda^2 m(\lambda)}_{=0 \text{ by } (\ref{eq:renormmin})} \biggr|
+ 2\amend{44}{\sum_{\lambda\in\Lambda\setminus\{1\}}}\left|W(\lambda)\right|\, \lambda^2 m(\lambda) \# \mathcal D(\lambda)
\end{align*}
The minimum distance bound~(\ref{eq:mindist}) implies that $\# \mathcal D(\lambda)\leq C \lambda^3 \# \partial X$ for some absolute constant $C$. Together $W(\lambda) \leq C \alpha \lambda^{-10}$ one obtains the estimate
$$ |J_1| \leq C\,\alpha\,\#\partial X\,\sum_{k \in
     \cfcc\setminus \{0\}} |k|^{-5}\leq C\,\alpha\,\#\partial X,$$
which is (\ref{J1bound}).
\subsubsection*{Analysis of $J_3$ - application of rigidity estimates}
We obtain a lower bound on
\begin{eqnarray*}
 J_3:=\sum_{q\in \nnb}\sum_{\lambda\in\Lambda}\sum_{\gamma\in\hat\Gamma(\lambda)}
 \hat k(\gamma) \cdot \hat M(\gamma)\,r_q\,\lf{g_{q}(\gamma)}\,G_{q}\, \hat k(q)\,W(\lambda)
\end{eqnarray*}
by applying the rigidity estimates of Propositions \ref{prop:distsum} and \ref{prop:distsum2}.

For each $q\in \nnb(\Omega)$ (c.f. (\ref{eq:sonedomain})), let $\sigma(q)\in\d$ be \lf{the simplex which was chosen in the construction of $R_q.$  Then, }$\eta_1,\eta_2 \in \sigma(q)\subset\O$.  Using the bound
$\left|G_{q}\right|\leq C\left|R-\lf{R_q}\right|^2,$ and noting that for each simplex
$\sigma$ the number of bonds $q$ such that $\sigma(q) = \sigma$ is bounded by 6, we obtain
\begin{eqnarray*}
 \sum_{q\in \nnb}\left|\lf{g_{q}(\gamma)}\,G_{q}\right|&\leq&C\sum_{q\in \nnb}
g_{q}\left|R-R_{q}\right|^2\nonumber\\
&\leq&C\sum_{q\in \nnb}\underbrace{\lf{g_{q}(\gamma)}}_{\leq 1}\left[\left|\left.\nabla
u\right|_{\sigma(q)}-R\right|^2+\left|\left.\nabla u\right|_{\sigma(q)}
-R_{q}\right|^2\right]\\
&\leq&C\left(\|\nabla u-R\|_{L^2(\O)}^2
+\|\nabla u-\lf{R_q}\|_{L^2(\O)}^2\right)\\
&\leq&C\sum_{q\in \nnb(\O)}
\left|\left|y(q_2)-y(q_1)\right|-1\right|^2.
\end{eqnarray*}
The final inequality is due to (\ref{eq:ltwo}), (\ref{eq:ltwoq}) and Lemma~\ref{lemma:wcell}.
This implies
\begin{eqnarray}\label{eq:Jthree}
\left|J_3\right|&\leq&C\sum_{\lambda\in\Lambda}\lambda
\left|W(\lambda)\right|\sum_{p\in P(\lambda)}
\sum_{q\in \nnb(\Omega_p)}\left|\left|y(q_2)-y(q_1)\right|-1\right|^2
\underbrace{\sum_{\atop{\gamma\in\hat\Gamma(\lambda)}{\gamma(0)=p_1, \gamma(\nu) = p_2}} \hat M(\gamma)}_{=1 \ \textnormal{by} \ (\ref{pathnumb})}\nonumber\\
&\stackrel{(\ref{eq:ballinverse})}{\leq}
&C\sum_{\lambda\in\Lambda}m(\lambda)\lambda^4\left|W(\lambda)\right|\sum_{q\in \nnb}
\left|\left|y(q_2)-y(q_1)\right|-1\right|^2\nonumber\\
&\leq&C\alpha\sum_{q\in \nnb}\left|\left|y(q_2)-y(q_1)\right|-1\right|^2
\end{eqnarray}
since (\ref{assump:vfourprimetwo}) and (\ref{assump:vfive}) imply that $\left|W(\lambda)\right|\leq C\alpha\lambda^{-8}$ for all $\lambda\in\Lambda.$  Thus, equation (\ref{J3bound}) has been established.
\subsubsection*{Analysis of $J_2$ - pairwise cancelation of terms}
We will show that
\begin{align} \label{J2vanish}
J_2 =\lf{\sum_{\lambda\in\Lambda\setminus\{1\}}}\sum_{q\in \nnb}\sum_{\gamma\in\hat\Gamma}\hat k(\gamma)
\cdot \hat M(\gamma)\,r_q\,g_{q}(\gamma)\,\lf{A_{q}(\gamma)}\,\hat k(q) W(\lambda)\geq \lf{- C \, \alpha \,
\#\partial X.}
\end{align}
This is mainly a consequence of the observation that thanks to the skewness of
$A_{q}$ nonzero contributions \amend{45}{} are only generated by terms where $\lf{\hat k(\gamma)}$
and $\hat k(q)$ are not parallel. To treat mixed terms where $p \neq q$ we collect
for each $q$ those pairs $p$ such that the sum is parallel to $\hat k(q)$.

First we recall the label paths $\hat\Gamma_*$ (cf. Def.~\ref{def:labpath}) which are invariant under
the action of the reflections $\hat \kappa$ and define
\begin{align*}
J_2^*= \sum_{q\in \nnb}r_q\,\sum_{\gamma\in\hat\Gamma_*} \hat k(\gamma) \cdot
\hat M(\hat \kappa_q(\gamma))\, g_q(\lf{\hat \kappa_q(\gamma)})\,A_{q}(\lf{\hat \kappa_q(\gamma)})\,\hat k(q)\,W(\lambda(\hat \kappa_q(\gamma))).
\end{align*}

According to Remark~\ref{fixeddomain} we can assume that all paths $\gamma'$ in the orbit $\mathcal O\left(\gamma\right)$ share the same reference configuration.
Therefore
\begin{align}\label{Ainvl}
A_q(\gamma) = A_q(\hat \kappa_q(\gamma)), \;
\hat M(\gamma)  = \hat M(\hat \kappa_q(\gamma)), \;
g_q(\gamma) = g_q(\hat \kappa_q(\gamma)).
\end{align}
\ft{Thanks to (\ref{Ainvl}) the sum $J_2^*$ can be written as}
\begin{align*}
J_2^* = \sum_{q\in \nnb}r_q\,\sum_{\gamma\in\hat\Gamma_*} \hat k(\hat \kappa_q(\gamma)) \cdot
\hat M(\hat \kappa_q(\gamma))\, g_q(\hat \kappa_q(\gamma))\,A_{q}\lf{(\hat \kappa_q(\gamma))}\,\hat k(q)\,W(\lambda(\hat \kappa_q(\gamma))).
\end{align*}
Adding the two expressions for $J^*_2$ one arrives at the representation
\begin{align*}
J_2 =(J_2-J_2^*)  + \sum_{q\in \nnb}r_q\,\sum_{\gamma\in\hat\Gamma}\frac{1}{2}
\hat M(\hat\kappa_q(\gamma))\,g_q(\hat\kappa_q(\gamma))\,\left[\hat k(\gamma)+ \hat k(\hat \kappa_q(\gamma))\right]\cdot A_{q}(\hat\kappa_q(\gamma))\,\hat k(q)\, \,W(\lambda(\hat\kappa_q(\gamma))).
\end{align*}
Equation~(\ref{parallel}) implies that
$\left[\hat k(\gamma)+\lf{\hat k(\hat \kappa_q(\gamma))}\right]\cdot A_{q}\,\hat k(q)=0$ since $A_q=\lf{A_q(\hat\kappa_q(\gamma))}$ is skew-symmetric, and thus
$$\lf{J_2 = J_2- J_2^*.}$$
The proof of the estimate $|J_2| \leq C\, \alpha \, \#\partial X$ is analogous to the
proof in Section~\ref{sec:defect}.
\end{proof}
\subsection{Proof of Corollary~\ref{cor:per}}
\begin{proof}
We will only consider the periodic case, compactly supported perturbations can be treated analogously.
Let $Z$ and $\lf{\c}$ be such that $\lf{\frac{1}{\#Y}} \,E_{\mathcal A}(Z) \leq \Efcc(1)$. A straight forward generalization of the proof of Proposition~\ref{prop:mindist} allows us to construct an \lf{$\c$-periodic} configuration $\tilde Z\subset Z$ such that
$$\frac{1}{\#\tilde Y} \,E_{\tilde\a}(\tilde Z)\leq \frac{1}{\#Y} \,E_{\a}(Z)$$
and $\tilde Z$ satisfies the minimum distance bound
$$ \min_{\atop{y,y' \in \tilde Z}{y\neq y'}}|y-y'| \geq 1-a.$$
Following the steps of the proof of the Theorem~\ref{thm:mainintro} one obtains the bound
\begin{align*}
E_{\tilde \a}(\tilde Z)\geq \estar\,\#\tilde Y+C\sum_{y \in \tilde Y} \sum_{\atop{y' \in \tilde Z}{||y-y'|-1|\leq a}}\left|\left|y-y'\right|-1\right|^2+ C\alpha^\frac{1}{2}\#\partial \tilde Y,
\end{align*}
where $\partial \tilde Y$ is defined analogously to Definition~\ref{defn:regular}. On the other hand, by construction $E_{\tilde \a}(\tilde Z)\leq \estar\,\lf{\#\tilde Y}$; thus $\partial \tilde Y =\emptyset$, and  $\left|y-y'\right|=1$ for all $y,y' \in \tilde Z$ such that $||y-y'|-1|\leq a$.

Proposition~\ref{prop:refconfig} implies that there exists a map $u: \R^3 \to \R^3$ such that $\tilde Z=u(\cfcc)$. Since $|u(\eta)-u(\eta')|=1$ for $|\eta-\eta'|=1$ we conclude that
$u(\eta) = Q\eta + t$ for some $Q \in SO(3)$, $t \in \R^3$ and all $\eta \in \R^3$ by Proposition~\ref{prop:distsum}.
\end{proof}
\section{Appendix}
\subsection*{Proof of Proposition~\ref{prop:discretelocal}}
Let $\sigma \in \mathcal U$ be a tetrahedron such that
$0,\xi \in \sigma $. Clearly $\sigma \subset Q \cup \{0\}$. We will show next that
$\Phi(\sigma) \subset N(x')$.
Indeed, if $\eta \in \sigma \setminus\{0,\xi\}$, then
eqn.~(\ref{eq:qdist}) implies that
$$||y\circ\Phi(\eta)-y\circ\Phi(\xi)| -1|\leq 2 \eps.$$
Theorem~\ref{thm:kissing} implies that $\Phi(\eta) \in N(x')$ for sufficiently
small $\eps$ since $x' \in X_\mathrm{reg}$.

Define next $\sigma'=(\Phi')^{-1}\circ \Phi(\sigma)\in \mathcal U$.
The rotation $T \in SO(3)$ is
characterized by the requirements
\begin{align}
\label{eq:cubinv} T \sigma'+\xi &= \sigma, \\
\label{eq:edgeinv}
T\left((\Phi')^{-1}(x)\right) +\xi &=0.
\end{align}

Define $\eta' = T^{-1}(\eta-\xi)$.
We will show later that
\begin{align} \label{eq:phieq}
\Phi(\eta) = \Phi'(\eta')
\end{align}
holds for all $\eta \in \sigma$. Eqn.~(\ref{eq:phieq}) implies that
\begin{align*}
|y(x)+R(T\eta' + \xi)-y(x') -R'\eta' |\leq 2\eps
\end{align*}
for all $\eta \in \sigma'$. Since $|y(x)-y(x')-R\xi| \leq \eps$ one obtains the bound
\begin{align*}
|(T-R^{-1} R')\eta'| \leq 3 \eps,
\end{align*}
for all $\eta' \in \sigma'$. Let now $M$ be a $3\times 3$ matrix whose columns are the vectors connecting the origin with the remaining 3 vertices of $\sigma'$. Without loss of generality we can assume that
$M=(b_1\, b_2\, b_3)$ where $b_i$, $i \in \{1,2,3\}$ are the basis vectors defined after (\ref{eq:fccdefn}). A simple explicit calculation shows that
$|M^{-1}| = \sqrt{2}$. This implies that
\begin{align*}
\left|T-R^{-1} R'\right| \leq 3 \sqrt{3}\, \eps
\end{align*}
and thus (\ref{eq:rotcomp}) holds.

It can be checked by inspection that the set
$\sx \cap (T(Q'\cup \{0\}) + \Phi^{-1}(x'))$ has at least 5 elements. Estimate~(\ref{eq:qdist})
and Theorem~\ref{thm:kissing} together imply that the set has precisely 5 elements.
This implies that $\#A=6$ holds.

Now we establish (\ref{eq:nnunique}). If $\eta \in A$, then
\begin{align*}
 &\left|y \circ \Phi(\eta) -y\circ \Phi'(\eta')\right|\\
\leq&
 \left|y\circ \Phi(\eta)-y(x)-R\eta\right|+|y(x)-y(x') + R\eta-R'\eta'|
+\left|y(x')+R'\eta'-y\circ\Phi'(\eta')\right|\\
\leq & 2\eps + \left|y(x)-y(x') + R\left(\eta-(R^{-1}R'-T+T)T^{-1}(\eta-\xi)\right)\right|\\
\leq & 2\eps + \left|y(x)-y(x') + R\xi\right| +\left|(R^{-1}R'-T)T^{-1}(\eta-\xi)\right|\\
\leq & 9 \eps
\end{align*}
by (\ref{eq:qdist}) and (\ref{eq:rotcomp}). Thus, if $9 \eps<1-\alpha$ then
$y \circ \Phi(\eta) =y\circ \Phi'(\eta')$. This is  (\ref{eq:nnunique}).

We finish the proof by establishing (\ref{eq:phieq}). Enumerate the vertices of the simplex so that
$\sigma=\{\sigma_1\ldots \sigma_4\}$ and assume that $\sigma_1=0$, $\sigma_2=\xi$.
Eqn.~(\ref{eq:edgeinv}) implies that (\ref{eq:phieq}) holds if $\eta\in \{\sigma_1, \sigma_2\}$. We will demonstrate that
$$ \Phi(\sigma_3) = \Phi'\left(T^{-1}(\sigma_4-\xi)\right)$$
implies
\begin{align} \label{eq:qdist1}
\min_{R' \in SO^-(3)}|R' \eta' + y(x') - y\circ\Phi'(\eta')|\leq 2\veps,
\end{align}
with $SO^-(3)=O(3) \setminus SO(3)$. For sufficiently small $\eps$ this contradicts (\ref{eq:qdist}) because the convex hull of $\sigma$ has positive volume.

To see that (\ref{eq:qdist1}) holds we assume without loss of generality
that $R=\Id$ and define the reflection
$$ R' = T\,(\Id - 2 (\sigma'_4-\sigma'_3) \otimes (\sigma'_4-\sigma'_3)).$$
Clearly $R' \in SO^-(3)$ and $R'\sigma_4' = T\sigma_3' = \sigma_3-\sigma_2$. Next, one calculates
\begin{align*}
& |R'\sigma_4' + y(x') - y\circ\Phi'(\sigma_4')|
= |\sigma_3-\sigma_2 + y(x) + y(x') -y(x)- y\circ \Phi(\sigma_3)|\\
\leq& |y(x) - y \circ \Phi(\sigma_3)+\sigma_3|+|y(x')-y(x) -\sigma_2| \leq 2\eps
\end{align*}
by (\ref{eq:qdist}) since $R=\Id$. Thus (\ref{eq:qdist1}) holds.
\subsection*{Proof of Proposition~\ref{prop:locref}}
Let $Q \in \{\cub, \tcub\}$, $R \in SO(3)$, $\Phi:Q \to X$ be the associated domains, rotations and maps from Proposition~\ref{prop:maxnhd}. Depending on $Q$ we select
$\c \in \{\cfcc, \chcp\}$ and the units $\mathcal U$ accordingly. Assume furthermore that
$\tau \in \mathcal U$ is an
octahedron such that $\tau \cap Q$ is a square.
For $\xi \in \tau \cap Q$ we define $x' = \Phi(\xi)$ and assume that
$Q' \in \{\cub, \tcub\}$, $\Phi':Q \to X$ and $R', T \in SO(3)$ are the associated domains, maps and rotations from Proposition~\ref{prop:maxnhd} and Proposition~\ref{prop:discretelocal}.

We extend $\Phi$ to \lf{$Q \cup \tau$} by defining
$$\Phi(\eta) = \Phi'(T^{-1}(\eta-\xi))$$
if $\eta \in \tau$ is the outmost vertex, i.e. $|\tau|=\sqrt{2}$.
Define furthermore \lf{$\Omega = \conv(Q) \cup \conv(\tau)$} and the interpolation
$u: \Omega \to \R^3$ according to Definition~\ref{defn:admap}.
The triple $(\Omega,\Phi,u)$ satisfies the requirements of Definition~\ref{defn:discretelocal} if
we show that the map $\Phi(\eta)$ does not depend on the choice of $\xi$.
Independence holds if we establish the bound
\begin{align}\label{eq:x3eq}
|y(x) +R\eta-y\circ \Phi(\eta)|\leq 8\eps
\end{align}
and choose $\alpha$ so small that $\eps < \frac{1}{16}$.

An application of the triangle inequality to the left hand side of
(\ref{eq:x3eq}) yields
\begin{align*}
 &\left|y(x)+R\eta-y\circ \Phi(\eta)\right|\\
\leq&
 \left|y(x)+R\eta'-y(x')\right|
+\left|R(\eta-\eta')+y(x')-y\circ\Phi'(T^{-1}(\eta-\eta'))\right|
+|(R-T^{-1}R')(\eta-\eta')|.
\end{align*}
Eqn.~(\ref{eq:qdist}) implies that the first two terms are bounded by
$\eps$, eqn.~(\ref{eq:rotcomp}) implies that the third term
is bounded by $6\eps$.

We repeat this procedure 5 more times
until we end up with the Lipschitz domain
$$\Omega= \bigcup_{\atop{\tau \in \mathcal U}{0 \in \tau}} \conv(\tau).$$

Equation (\ref{eq:snd}) is an immediate consequence of the construction.

\subsection*{Proof of Proposition \ref{prop:refconfig}}
We define $\Omega_l = l \cub$ and construct inductively reference configurations $(\Omega_l,\Phi_l,u_l)$ for $l \in \{1\ldots s\}$ such that $\Phi_l(\eta_\mathrm{center}) = x$
with $\eta_\mathrm{center}= [l/2]\sqrt{2}(1,0,0)^T\in \Omega_l \cap \cfcc$ and
\begin{align}
\label{eq:nablasoind}
 \|\dist(\nabla u, SO(3))\|_{L^\infty(\Omega_l)} \leq C \alpha
\end{align}
for some universal constant $C>0$.

Moreover the maps $\Phi_l$ have the property that $N(\Phi_l(\eta)) \cap \lf{\partial X}
= \emptyset$ and the local reference configurations $(\Omega^\mathrm{local}, \Phi^\eta, u^\eta)_{\eta \in \Omega_l \cap \cfcc}$ with
$\Omega^\mathrm{local} = \bigcup_{0 \in \tau \in \mathcal U} \conv(\tau)$ of
Proposition~\ref{prop:locref} can be chosen so that they are compatible, i.e.
\begin{align} \label{compfam}
\Phi^{\eta'}(\eta-\eta')=\Phi^{\eta''}(\eta-\eta'') \text{ if } \eta -\eta', \; \eta-\eta'' \in \Omega^\mathrm{local} \cap \cfcc.
\end{align}
The existence of the reference configuration $(\Omega_l,\Phi_l, u_l)$ in the case $l=1$ is a consequence of Proposition~\ref{prop:locref}. Estimate (\ref{eq:nablasoind}) follows from Lemma~\ref{lemma:wcell}.
Proposition~\ref{prop:maxnhd}
together with the assumption $\dist(\{y(x)\},y(\partial X)) \geq 2r +3$ implies that $\{\Phi_l(\eta)\}\cup N(\Phi_l(\eta))
\cap \partial X = \emptyset$ for all $\eta \in \Omega_l$. The compatibility is a consequence
of Proposition~\ref{prop:discretelocal} and (\ref{eq:ltwo}).

In the induction step we define for each $\eta \in \Omega_{l+1}
\cap \cfcc$ the label $\Phi_{l+1}(\eta)$ as follows:
$$ \Phi_{l+1}(\eta) = \left\{\begin{array}{rl}
\hat \Phi_l(\eta) & \text{ if } \eta \in \hat \Omega_l,\\
\Phi^{\eta'}(\eta-\eta') & \text{ if } \eta \in \Omega_{l+1}\setminus
\hat \Omega_l \cap \cfcc, \eta' \in \Omega_l\cap \cfcc \text{ and } |\eta-\eta'|=1,
\end{array}\right.$$
where the potentially translated domain $\Omega_l$ and map $\Phi_l$ are given by
$$\hat \Omega_l = \left\{ \begin{array}{rl} \Omega_l & \text{ if } [(l+1)/2]=[l/2]\\
\Omega_l + \eta_\mathrm{center} & \text{ else}, \end{array} \right.$$
and
$$\hat \Phi_l = \left\{ \begin{array}{rl} \Phi_l & \text{ if } [(l+1)/2]=[l/2]\\
\Phi_l(\cdot - \eta_\mathrm{center}) &\text{ else}. \end{array} \right.$$
The translated local reference configurations $(\Omega^\mathrm{local}, \Phi^\eta, u^\eta)$
are defined in a similar fashion.
We have to show that $\Phi^{\eta'}\left(\eta-\eta'\right)$ does not depend on the
choice of $\eta'$. Indeed, if $\eta',\eta'' \in \Omega_l \cap \cfcc$ have the
property that $|\eta-\eta'|=|\eta-\eta''|=1$, then $|\eta'-\eta''|=1$ since
$\Omega_l$ is a scaled octahedron with the property that $\partial \Omega_l \cap \cfcc$ is a union
of subsets of rigidly translated and rotated triangular lattices.
Thus $\Phi^{\eta'}$ and $\Phi^{\eta''}$ are
compatible, this implies that $\Phi^{\eta'}(\eta-\eta') = \Phi^{\eta''}(\eta-\eta'')$.

The existence and compatibility of the local reference configurations follows from a similar
argument like in the case $l=1$.

Inequality (\ref{eq:nablaso}) follows from (\ref{eq:ltwo}).
\subsection*{Proof of Lemma \ref{lemma:wcell}}
First, we define for each simplex $\sigma \in \mathcal D$ such that
$\sigma \subset \tau$ the local gradient
$F_\sigma = \nabla u|_\conv(\sigma)$. Note that $F_\sigma$ depends linearly on $u$ and satisfies for each $G \in \R^{3 \times 3}$ the equation $F_\sigma(u) = G$
if $u(\eta) = G\eta$ for all $\eta \in \tau$. This implies that there exists
a constant $C>0$ such that for fixed $R \in SO(3)$
\begin{align} \label{Fbound}
|F_\sigma-R|^2 \leq C\;I_R(u)
\end{align}
where
\begin{align}
I_R(u) = \min \left\{\sum_{\eta \in \tau}|u(\eta) -t-R\eta|^2 \; :
\; t \in \R^3\right\}.
\end{align}
We will show below that
\begin{align} \label{Wbound}
 \min_{R \in SO(3)}I_R(u) \leq C \amend{46}{W_{\tau}}(u) \text{ for all } u
\text{ such that } \| \dist(\nabla u,SO(3))\|_{L^\infty(\conv(\tau))}\leq c
\end{align}
if $c,C>0$ are suitably chosen.
The bounds (\ref{Fbound}) and (\ref{Wbound}) deliver the claim:
\begin{align} \label{Gbound}
\min_{R \in SO(3)} \| \amend{47}{\nabla} u - R\|^2_{L^2(\conv(\tau))} =
\min_{R \in SO(3)} \sum_{\atop{\sigma \in \mathcal D}{\sigma \subset \tau}}
\meas(\conv(\sigma))\; |F_\sigma - R|^2 \leq C \; W_\tau(u).
\end{align}
The proof of (\ref{Wbound}) rests on the observation that
\amend{48}{$I_R$} and $W_\tau$ are non-negative and invariant under translations
and rotations. Thanks to the invariances and the fact that
$\amend{48}{\min_{R \in SO(3)}I_R}(u_0)=W_\tau(u_0) = 0$, with $u_0(\eta)=\eta$, $\eta \in \tau$,
it suffices to establish the bound
\begin{align} \label{symbound}
\sum \amend{49}{|v(\eta)|}^2 \leq C W_\tau(u_0+v) \text{ for all }
v \in A, |v| \leq c.
\end{align}
To see that (\ref{symbound}) holds we define the Hessian
$H=D^2 W_\tau(u_0)$.
We will show that $H$ is positive definite on the subspace
$\amend{50}{A} \subset (\R^3)^\tau$ which is defined as the orthogonal complement of the
subspace spanned by translations $z(\eta) = \amend{51}{t} \in \R^3$ for all $\eta \in \tau$
and infinitesimal rotations $z(\eta) = A \eta,$ $\eta \in \tau$,
where $t \in \R^3$ and $A$ is skew-symmetric.

If $H$ is positive definite on $A$, then it is easy to see that there exist
constants $c, C>0$ which
depend on $\|W_\tau(u_0+\cdot)\|_{C^3(A\cap B(0,c))}$ such that
(\ref{symbound}) holds.

To prove the positivity of the restriction of $H$ to $A$
we derive a more explicit representation of $H$.
The gradient of $W_\tau$ is given by
\begin{eqnarray*}
 \frac{\partial W_{\tau}(u)}{\partial u(\eta)}=2\sum_{\substack{\eta'\in \tau\\ |\eta-\eta'|=1}}\frac{\left|u(\eta)-u(\eta')\right|-1} {\left|u(\eta)-u(\eta')\right|}\;\left(u(\eta)-u(\eta')\right),
\end{eqnarray*}
Thus, $3 \times 3$-block components the Hessian matrix $H$ are of the form,
\begin{eqnarray*}
\frac{\partial^2 W_\tau(u_0)}{\partial u(\eta)\; \partial u(\eta')} =\left\{\begin{array}{ll}2\sum\limits_{\substack{\eta'' \in \tau:\\ |\eta-\eta''|=1}}(\eta-\eta'')\otimes(\eta-\eta'') & \textnormal{if} \ \eta=\eta',\\[2.5em]
-2(\eta-\eta')\otimes(\eta-\eta') & \textnormal{if } |\eta-\eta'|=1,\\[0.5em]
0 & \textnormal{else}
\end{array}\right.
\end{eqnarray*}
for all $\eta,\eta' \in \tau$.

In the case where $\tau$ is a tetrahedron the associated eigenvalues of $H$
are $0$ (multiplicity 6), 2 (multiplicity 2), 4 (multiplicity 3) and 8
(multiplicity 1), this can be verified either with an explicit, but lengthy
calculation, or a computer-algebra package.
If $\tau$ is an octahedron we obtain the eigenvalues $0$ (multiplicity 6),
2 (multiplicity 5), 4 (multiplicity 3), $6$ (multiplicity 3) and $8$
(multiplicity 1).  In particular, both Hessian matrices have a kernel of
dimension 6.  By the rotational and translational invariance of $W_{\tau},$
it follows that zero eigenmodes must correspond to the six-dimensional space
of rotations of translations, and that the Hessian matrices are positive
definite on the orthogonal complement of this space.

\subsection*{Proof of Lemma \ref{lemma:cardinality}}
{\bf Long range pairs}\\
The lower bound is an immediate consequence of the injectivity of
the map $\Phi$ associated with each $p \in \cup_{\lambda \in \Lambda}$
(Proposition~\ref{prop:refconfig}). For each $x\in X$
and $\lambda\in\Lambda,$ let
\begin{eqnarray*}
s(x,\lambda):=\#\left\{x'\in X:(x,x')\in P(\lambda)\right\}.
\end{eqnarray*}
If $p=(x,x')\in P(\lambda),$ then
\begin{eqnarray}\label{eq:sxone}
s(x,1)= \#\left\{(x,x')\in \nnb:x'\in X\right\}=m(1).
\end{eqnarray}
The injectivity of the map $\Phi$ implies that $s(x,\lambda)\leq m(\lambda)$.
Inequality (\ref{eq:sxone}) implies $s(x,\lambda)\leq\frac{m(\lambda)}{m(1)}s(x,1).$  We obtain,
\begin{eqnarray*}
 \# P(\lambda)= \sum_{x\in X}s(x,\lambda)\leq\frac{m(\lambda)}{m(1)}\sum_{x\in X}s(x,1)=\frac{m(\lambda)}{m(1)}\# \nnb.
\end{eqnarray*}
and the left-hand inequality of (\ref{eq:cardinality}) is proved.

For the upper bound, let $\lambda\in\Lambda$ and suppose there exists
$x\in X$ such that $s(x,\lambda)<m(\lambda).$ Thanks to Proposition~\ref{Mintrin}.\ref{defex}
there exists a defect $x_b\in\partial X$ such that $y(x_b)\in B(y(x),4\lambda)$ and the minimum
distance bound (\ref{eq:mindist}) implies that
\begin{eqnarray*}
\# \left(B(y(x_b),2\lambda)\cap y(X)\right)\leq C\lambda^3.
\end{eqnarray*}
Thus, the number of labels $x\in X$ such that $s(x,\lambda)<m(\lambda)$ is bounded above by $C\lambda^3\#\partial X$ and we obtain
\begin{align*}
 \# P(\lambda)=\sum_{x\in X}s(x,\lambda)&\geq m(\lambda)\left(\frac{1}{m(1)}
\sum_{x\in X}s(x,1)-C\lambda^3\#\partial X \right)\\
&=m(\lambda)\left(\frac{1}{m(1)}\# \nnb -C\lambda^3\#\partial X\right)
\end{align*}
and the right-hand inequality of (\ref{eq:cardinality}) is proved.

{\bf Short- and medium-range pairs}\\
Firstly, note that
$$ m(1) = 12,\ m(\sqrt{2}) = 6, m(\sqrt{3}) = 24.$$
The proof of (\ref{nnbound}) is immediate:
\begin{align*}
\# \nnb =& \sum_{x\in X}\#N(x) = \sum_{x\in X_{12}}\#N(x)+\sum_{x\in X\setminus X_{12}}\#N(x)
= m(1) \# X - \sum_{x \in X \setminus X_{12}} (m(1)-\#N(x)).
\end{align*}
Proposition~\ref{prop:maxnhd} implies that $1 \leq m(1)-\#N(x) \leq m(1)$ in
the last sum, therefore
(\ref{nnbound}) holds.

Inequality (\ref{eq:X2}) is the result of a simple estimate:
\begin{align} \nonumber
\#(\Xreg \setminus \Xreg^2) \leq&\sum_{x \in \Xreg} \#(N(x) \cap (X \setminus \Xreg))
=\sum_{x \in X \setminus \Xreg} \#(N(x) \cap \Xreg)\\
\leq& 12 (n-\# \Xreg).
\end{align}

Now we consider the case $\lambda = \sqrt{2}$.
For $p \in P\left(\lambda\right)$ we define
$$\lf{ a(p) = \# \left\{ x \in \Xreg^2 \; : \;  p \subset N(x)\right\}.}$$
One obtains that
\begin{align*}
\#P\left(\lambda\right)=& \sum_{x \in \Xreg^2}
\sum_{\atop{p \subset N(x)}{p \in P(\lambda)}}\frac{1}{a(p)}\\
=& \sum_{\atop{x \in \Xreg^2}{N(x) \subset \Xreg^2}} \sum_{\atop{p \subset N(x)}{p \in P(\lambda)}}\frac{1}{a(p)} +
\sum_{\atop{x \in \Xreg
^2}{N(x) \not\subset \Xreg^2}} \sum_{\atop{p \subset N(x)}{p \in P(\lambda)}}\frac{1}{a(p)}.
\end{align*}
It is easy to see that $x \in \Xreg^2$ implies $\#\{p \subset N(x)\; : \; p \in P(\lambda)\}=24$ and $N(x) \subset \Xreg^2$ implies $a(p) = 4$, hence
\begin{align} \label{eq:2mainlb}
\#P\left(\lambda\right)
= 6\,\#\Xreg^2 +\sum_{\atop{x \in \Xreg^2}{N(x) \not\subset \Xreg^2}}\left(-6+ \sum_{\atop{p \subset N(x)}{p \in P(\lambda)}}\frac{1}{a(p)}\right).
\end{align}
Since $a(p)\leq 4$ one finds that
\begin{align} \label{eq:2est}
\sum_{\atop{p \subset N(x)}{p \in P(\lambda)}}\frac{1}{a(p)}\geq 6,
\end{align}
therefore it suffices to bound the second term in (\ref{eq:2mainlb}) from above.
\begin{align} \nonumber
&\sum_{\atop{x \in \Xreg^2}{N(x) \not\subset \Xreg^2}} \left(-6+\sum_{\atop{p \subset N(x)}{p \in P(\lambda)}}\frac{1}{a(p)}\right) \leq 18
\#\left\{x \in \Xreg^2 \; : \; N(x) \not\subset \Xreg^2\right\}\\ \nonumber
\leq&4\sum_{x \in \Xtcub^2} \#\left(N(x)\setminus \Xreg^2\right)
= 18\sum_{x \in X\setminus \Xreg^2} \underbrace{\#(N(x) \cap \Xreg^2)}_{\leq 6}\\
\leq &108(n-\#\Xreg^2)= 108(n-\#\Xreg) + 108\#(\Xreg\setminus \Xreg^2)
\leq C(n-\#\Xreg). \label{eq:2mainub}
\end{align}
The final inequality is due to (\ref{eq:X2}).
Estimates (\ref{eq:2mainub}), (\ref{eq:2est}) and (\ref{eq:2mainlb}) imply (\ref{snbound}).

Now we consider the case $\lambda = \sqrt{8/3}$ which represents the shortest distance in which the non-equivalence of fcc and hcp becomes relevant. The proofs of (\ref{snbound}) and (\ref{s8bound}) are nearly identical.
For $p \in P\left(\lambda\right)$ we define
$$ \lf{a(p) = \# \left\{ x \in \Xtcub^2 \; : \; p \subset N(x)\right\}.}$$
One obtains that
\begin{align*}
\#P\left(\lambda\right)=& \sum_{x \in \Xtcub^2}
\sum_{\atop{p \subset N(x)}{p \in P(\lambda)}}\frac{1}{a(p)}\\
=& \sum_{\atop{x \in \Xtcub^2}{N(x) \subset \Xreg^2}} \sum_{\atop{p \subset N(x)}{p \in P(\lambda)}}\frac{1}{a(p)} +
\sum_{\atop{x \in \Xtcub^2}{N(x) \not\subset \Xreg^2}} \sum_{\atop{p \subset N(x)}{p \in P(\lambda)}}\frac{1}{a(p)}.
\end{align*}
If $x \in \Xreg^2$ and $N(x) \subset \Xreg^2$ then $a(p) = 3$, hence
\begin{align} \label{eq:83mainlb}
\#P\left(\lambda\right)
= 2\,\#\Xtcub^2 +\sum_{\atop{x \in \Xtcub^2}{N(x) \not\subset \Xreg^2}}\left(-2+ \sum_{\atop{p \subset N(x)}{p \in P(\lambda)}}\frac{1}{a(p)}\right).
\end{align}
Since $a(p)\leq 3$ one finds that
\begin{align} \label{eq:83est}
\sum_{\atop{p \subset N(x)}{p \in P(\lambda)}}\frac{1}{a(p)}\geq 2,
\end{align}
therefore it suffices to bound the second term in (\ref{eq:83mainlb}) from above.
\begin{align} \nonumber
&\sum_{\atop{x \in \Xtcub^2}{N(x) \not\subset \Xreg^2}} \left(-2+\sum_{\atop{p \subset N(x)}{p \in P(\lambda)}}\frac{1}{a(p)}\right) \leq 4
\#\left\{x \in \Xtcub^2 \; : \; N(x) \not\subset \Xreg^2\right\}\\ \nonumber
\leq&4\sum_{x \in \Xtcub^2} \#\left(N(x)\setminus \Xreg^2\right)
= 4\sum_{x \in X\setminus \Xreg^2} \underbrace{\#(N(x) \cap \Xtcub^2)}_{\leq 6}\\
\leq &24(n-\#\Xreg^2)= 24(n-\#\Xreg) + 24\#(\Xreg\setminus \Xreg^2)
\leq C(n-\#\Xreg). \label{eq:83mainub}
\end{align}
The final inequality is due to (\ref{eq:X2}).
Estimates (\ref{eq:83mainub}), (\ref{eq:83est}) and (\ref{eq:83mainlb}) imply (\ref{s8bound}).

Finally we consider $\lambda = \sqrt{3}$.

For $x \in \Xreg$ we define the equator
$$N_\mathrm{eq}(x)=\left\{x' \in N(x) \; : \; \Phi^{-1}(x') \cdot (b_1\times b_2) = 0\right\}.$$
It is easy to see that
$\Phi^{-1}(N_\mathrm{eq}(x))$ is a regular hexagon in the plane spanned
by the vectors $b_1$ and $b_2$. Armed with this notation one finds
\begin{align*}
\#P\left(\sqrt{3}\right)=& 2\sum_{x \in \Xcub^2}(24-\#(N(x) \cap\Xreg^2))\\
&+\sum_{x \in \Xtcub^2}\left(36 - 2\#\left(N(x)\cap \Xreg^2\right)+\#\left(N_\mathrm{eq}(x) \cap\Xreg^2\right)\right)
\end{align*}
One obtains the following estimate for the first term:
\begin{align}\label{eq:3rep}
&2\sum_{x \in \Xcub^2}(24-\#(N(x) \cap\Xreg^2))
=24 \#\Xcub^2 +
2\,\sum_{\atop{x \in \Xcub^2}{N(x) \not \subset \Xreg^2}}(12-\,\#(N(x) \cap\Xreg^2)).
\end{align}
As $\#N(x)\leq 12$ one obtains the lower bound
\begin{align} \label{eq:3lb}
 2\sum_{x \in \Xcub^2}(24-\#(N(x) \cap\Xreg^2)) \geq 24 \#\Xcub^2.
\end{align}
The inequality $\#(N(x) \cap\Xreg^2)\geq 0$ implies
\begin{align} \nonumber
&2\sum_{x \in \Xcub^2}(24-\#(N(x) \cap\Xreg^2))
\leq 24 \#\Xcub^2 +24\#\{ x\in \Xcub^2 \; : \; N(x) \not \subset \Xreg^2\}\\
\nonumber
\leq &24 \#\Xcub^2 +24\sum_{x \in X \setminus \Xreg^2} \underbrace{\#(N(x)\setminus \Xreg)}_{\leq 12}
\leq 24 \#\Xcub^2+C(n-\#\Xreg^2)\\
\label{eq:3ub}
\leq &24 \#\Xcub^2+C(n-\#\Xreg)+C\#\left(\Xreg\setminus \Xreg^2\right)\leq 24 \#\Xcub^2+C(n-\#\Xreg).
\end{align}
The final inequality is a result of (\ref{eq:X2}).

The second term in (\ref{eq:3rep}) can be estimated in a similar way:
\begin{align*}
 &\sum_{x \in \Xtcub^2}\left(36 - 2\#\left(N(x)\cap \Xreg^2\right)+\#\left(N_\mathrm{eq}(x) \cap\Xreg^2\right)\right)\\
=& 18 \# \Xtcub
+ \sum_{\atop{x \in \Xtcub^2}{N(x) \not\subset \Xreg^2}}\left(18 - 2\#\left(N(x)\cap \Xreg^2\right)+\#\left(N_\mathrm{eq}(x) \cap\Xreg^2\right)\right).
\end{align*}
The inequality $2\#\left(N(x)\cap \Xreg^2\right)-\#\left(N_\mathrm{eq}(x) \cap\Xreg^2\right)\leq 18$ implies a lower bound for the second term in (\ref{eq:3rep}):
\begin{align} \label{eq:3tlb}
 \sum_{x \in \Xtcub^2}\left(36 - 2\#\left(N(x)\cap \Xreg^2\right)+\#\left(N_\mathrm{eq}(x) \cap\Xreg^2\right)\right)\geq 18 \# \Xtcub.
\end{align}
Similary to (\ref{eq:3ub}) one obtains the upper bound
\begin{align} \nonumber
 &\sum_{x \in \Xtcub^2}\left(36 - 2\#\left(N(x)\cap \Xreg^2\right)+\#\left(N_\mathrm{eq}(x)
\cap\Xreg^2\right)\right)\\
\leq & 18 \# \Xtcub + 18\#
\left\{x \in \Xtcub \; : \; N(x) \not \subset \Xreg^2\right\}\leq 18 \# \Xtcub+C(n-\Xreg).
\label{eq:3tub}
\end{align}
Equations (\ref{eq:3lb}), (\ref{eq:3ub}), (\ref{eq:3tlb}) and (\ref{eq:3tub}) imply (\ref{s3bound}).
\subsection*{Proof of Lemma \ref{lemma:lambda}}
For $\lambda\in\Lambda,$ let $S(\lambda):=\left\{k\in\lf{\cfcc}:\left|k\right|=\lambda\right\}$ and $M(\lambda,v):=\sum_{k\in S(\lambda)}a_k(v,B)v\cdot k.$  We first demonstrate the existence of $r(\lambda)>0$ such that $\K(\lambda):=\sum_{k\in S(\lambda)}k\otimes k=r(\lambda)\I.$  To this end, let $\left\{\pi_i\right\}_{i=1}^4$ be the four triangular lattice planes which pass through the origin and, for $i\in\left\{1,\ldots,4\right\},$ let $R_i\in SO(3)$ be the rotation by an angle $2\pi/3$ in the plane $\pi_i.$  Then, $R_iS(\lambda)=S(\lambda),$ and there exists a unique invariant line $\ell_i\subset\re^3$ such that $R_i\ell_i=\ell_i.$  Thus, $\K(\lambda)$ is invariant under $R_i$ in the sense that $R_i^T\K(\lambda)R_i=\K(\lambda),$ since
\begin{eqnarray*}
 R_i^T\K(\lambda)R_i=\sum_{k\in S(\lambda)}R_i^Tk\otimes R_i^Tk=\K(\lambda).
\end{eqnarray*}
Since $R_i^T\K(\lambda)R_i\ell_i=R_i^T\K(\lambda)\ell_i=\K(\lambda)\ell_i$ if and only if $\K(\lambda)\ell_i=\ell_i,$ this implies that $\K(\lambda)$ has four invariant lines and therefore $\K(\lambda)=r(\lambda)\I.$  In particular, if $\left\{e_1,e_2,e_3\right\}$ is the standard basis of $\re^3,$ then the relation $e_i\cdot\K(\lambda)e_i=\sum_{k\in S(\lambda)}(k\cdot e_i)^2$ for $i=1,2,3$ implies
\begin{eqnarray*}
 r(\lambda)=\frac{1}{3}\sum_{i=1}^3\sum_{k\in S(\lambda)}(k\cdot e_i)^2=\frac{1}{3}m(\lambda)\lambda^2.
\end{eqnarray*}
Thus, if $B=(v_1,v_2,v_3)\in\b$ then (\ref{eq:kbasis}) states that $a_k(v,B)=B^{-1}k\cdot B^{-1}v$ and we obtain
\begin{eqnarray*}
 M(\lambda,v)=\sum_{k\in S(\lambda)}(B^{-1}k\cdot B^{-1}v)(k\cdot v)=v\cdot B^{-T} B^{-1}\sum_{k\in S(\lambda)}(k\otimes k)v=r(\lambda)\left| B^{-1}v\right|^2=\frac{1}{3}m(\lambda)\lambda^2
\end{eqnarray*}
where the final equality follows from the fact that $v=v_i$
for some $i\in\{1,2,3\}$ and thus $|B^{-1}v|=|e_i|=1.$
\begin{notation}
\begin{enumerate}
\item \lf{$\cfcc$ and $\chcp$ denote the face-centered cubic and hexagonal close-packed lattices respectively.}
\item $B(\eta,r)\subset\re^3$ denotes the closed ball, centered at $\eta\in\re^3$ with radius $r>0.$ \item $\sx$ is the unit sphere, centered at the origin.
\item $X$ is a labeling set of $\#X$ particles, \lf{with $n=\# X.$}
\item $P:=\left\{(x,x')\in X^2:x\neq x'\right\}$ and $\nnb:=\{(x,x')\in X^2:||y(x')-y(x)|-1|\leq\alpha$ are the set of pairs and edges respectively.  We denote the components of $p\in P$ by $p=(p_1,p_2)$ \lf{and the components of $\nnb$ by $q=(q_1,q_2).$}
\item $N(x):=\left\{x'\in X:(x,x')\in \nnb\right\}$
\item $\angset(x):=\left\{q\in \nnb:q_1,q_2\in N_1(x)\right\}$ is the set of nearest neighborhood edges of $x\in X.$
\item $\cub:=\cfcc\cap \sx$ and $\tcub:=\chcp\cap \sx$ contain the vertices of a cuboctahedron and twisted cuboctahedron respectively, centered at the origin. $\oct$ denotes the octahedron with the
vertices $$
\frac{1}{\sqrt{2}}\textstyle \left(\begin{array}{cccccc}
0 & 1 & 1 & 1 & 1 & 2\\
0 & 1 & -1 & 0 & 0 & 0\\
0 & 0 & 0 & 1 & -1 & 0
\end{array}\right)\, e_i, \quad i = 1\ldots 6.$$
\item $x\in X$ is regular if $\# N(x)=12$ and $\half\#\angset(x)=24.$
\item $\partial X\subset X$ is the set of defects (c.f. Definition~\ref{defn:regular}).
\item $\d$ and $\u$ are the sets of simplices and units respectively (c.f. Definition~\ref{defn:simplices}).
\item $\nnb(\O):=\left\{q\in \nnb: \Phi^{-1}(q_1),\Phi^{-1}(q_2)\in\O\right\}\subset \nnb.$
\item $\Lambda:=\left\{|z|\;:\; z'\in\cfcc\setminus\{0\}\right\}$ is the set of fcc lattice distances and, for each $\lambda\in\Lambda,$ \  $m(\lambda)=\#\left\{\zeta\in\cfcc:\left|\zeta\right|=\lambda\right\}.$
\item For each $\lambda\in\Lambda,$ \ $P(\lambda)\subset P$ is the set of pairs associated with a reference configuration pair of length $\lambda$ (c.f. Definition~\ref{defn:kbonds}) and $P_0:=P\backslash\cup_{\lambda\in\Lambda}P(\lambda)\subset P$ is the set of defect pairs.
\item $\glab[B]$ is the set of reference paths with directions determined by a basis $B\in\b$ and $\glab=\cup_{B\in\b}\glab[B]$ is the complete set of reference paths (c.f. Definition~\ref{defn:refpaths}).
\item  $\gl(\lambda)$ is the set of label paths associated with a lattice distance
$\lambda\in\Lambda$ and $\hat \Gamma= \cup_{\lambda\in\Lambda}\hat \Gamma(\lambda)$ is the complete set of label paths (c.f. Definition~\ref{defn:refpaths}).
\item $g_{q}(\gamma)\in\{0,1\}$ is an indicator function which takes the value 1 if and only if
$q\in\gamma.$

\end{enumerate}
\end{notation}

\end{document}